\theoremstyle{plain}
    \newtheorem{theorem}{Theorem}[section]
    \newtheorem{lemma}[theorem]{Lemma}
    \newtheorem{corollary}[theorem]{Corollary}
    \newtheorem{proposition}[theorem]{Proposition}
 \theoremstyle{definition}
    \newtheorem{definition}[theorem]{Definition}
    \newtheorem{example}[theorem]{Example}
    \newtheorem{remark}[theorem]{Remark}
\theoremstyle{remark}
\numberwithin{equation}{section}
 \DeclareMathOperator{\Tr}{Tr}
 \DeclareMathOperator{\tr}{tr}
\DeclareMathOperator{\im}{im}
\DeclareMathOperator{\Ad}{Ad}
\DeclareMathOperator{\End}{End}
\DeclareMathOperator{\rank}{rank}
         \DeclareMathOperator{\supp}{supp}
\DeclareMathOperator{\WF}{WF}
\DeclareMathOperator{\Id}{Id}
\begin{document}

    \newcommand{\R}{\mathbb{R}}
    \newcommand{\C}{\mathbb{C}}
    \newcommand{\N}{\mathbb{N}}
    \newcommand{\Z}{\mathbb{Z}}
    \newcommand{\Q}{\mathbb{Q}}
    \newcommand{\bT}{\mathbb{T}}
    \newcommand{\bP}{\mathbb{P}}

\newcommand{\kg}{\mathfrak{g}}
\newcommand{\ka}{\mathfrak{a}}
\newcommand{\kb}{\mathfrak{b}}
\newcommand{\kk}{\mathfrak{k}}
\newcommand{\kt}{\mathfrak{t}}
\newcommand{\kp}{\mathfrak{p}}
\newcommand{\km}{\mathfrak{m}}
\newcommand{\kh}{\mathfrak{h}}
\newcommand{\kso}{\mathfrak{so}}

\newcommand{\cA}{\mathcal{A}}
\newcommand{\cE}{\mathcal{E}}
\newcommand{\calL}{\mathcal{L}}
\newcommand{\calH}{\mathcal{H}}
\newcommand{\cO}{\mathcal{O}}
\newcommand{\cB}{\mathcal{B}}
\newcommand{\cK}{\mathcal{K}}
\newcommand{\cP}{\mathcal{P}}
\newcommand{\cN}{\mathcal{N}}
\newcommand{\calD}{\mathcal{D}}
\newcommand{\cC}{\mathcal{C}}
\newcommand{\calS}{\mathcal{S}}
\newcommand{\cM}{\mathcal{M}}
\newcommand{\cU}{\mathcal{U}}
\newcommand{\cT}{\mathcal{T}}

\newcommand{\Rnz}{\R \setminus \{0\}}

\newcommand{\Bigwedge}{\textstyle{\bigwedge}}

\newcommand{\beq}[1]{\begin{equation} \label{#1}}
\newcommand{\eeq}{\end{equation}}

\newcommand{\ddt}{\left. \frac{d}{dt}\right|_{t=0}}
\newcommand{\mattwo}[4]{
\left( \begin{array}{cc}
#1 & #2 \\ #3 & #4
\end{array}
\right)
}

\newcommand{\dds}{\left. \frac{d}{ds}\right|_{s=0}}

\title{An equivariant Guillemin trace formula}

\author{Peter Hochs\footnote{Radboud University, \texttt{p.hochs@math.ru.nl}} {}
and Hemanth Saratchandran\footnote{The University of Adelaide, \texttt{hemanth.saratchandran@adelaide.edu.au}}}

%




\date{\today}

\maketitle

\begin{abstract}
Guillemin's trace formula is an expression for the distributional trace of an operator defined by pulling back functions  along a flow on a compact manifold. We obtain an equivariant generalisation of this formula, for proper, cocompact group actions. This is motivated by the construction of an equivariant version of the Ruelle dynamical $\zeta$-function in another paper by the same authors, which is based on the equivariant Guillemin trace formula. To obtain this formula, we first develop an equivariant version of the distributional trace that appears in Guillemin's formula and other places.
\end{abstract}

\tableofcontents

\section{Introduction}

\subsection*{Guillemin's trace formula}

Consider a flow $\varphi$ on a compact manifold $M$. For a flow curve $\gamma$ of $\varphi$ that is periodic with period $l$, let the linearised Poincar\'e map
\[
P_{\gamma} \colon T_{\gamma(0)}M/\R \gamma'(0) \to T_{\gamma(0)}M/\R \gamma'(0)
\]
be induced by the derivative of the flow map $\varphi_l$. We assume that $\varphi$ is nondegenerate, in the sense that $1-P_{\gamma}$ is invertible for all such $\gamma$.

For $t \in \R$, consider the operator $\varphi_t^*$ on $C^{\infty}(M)$ given by pulling back functions along $\varphi_t$. If $\psi \in C^{\infty}_c(\Rnz)$, then we define the operator $\varphi^*_{\psi}$ on $C^{\infty}(M)$ by
\[
\varphi^*_{\psi} := \int_{\Rnz} \psi(t) \varphi_t^*\, dt.
\]
Nondegeneracy of $\varphi$ implies that the wave front set of the Schwartz kernel $K_{\varphi^*_{\psi}}$ of $\varphi^*_{\psi}$ is disjoint from the conormal bundle to the diagonal  $\Delta(M) \subset M \times M$. Hence $K_{\varphi^*_{\psi}}$ has a well-defined restriction to $\Delta(M)$. Then the distributional \emph{flat trace} of $\varphi_{\psi}^*$ is
\beq{eq def flat trace intro}
\Tr^{\flat}(\varphi^*_{\psi}) :=
\int_{\Delta(M)} K_{\varphi^*_{\psi}}|_{\Delta(M)}\, dm,
\eeq
where this integral is the pairing of $K_{\varphi^*_{\psi}}|_{\Delta(M)} \in \calD'(\Delta(M))$ with the constant function $1$. This trace is used for example in Theorem 19.4.1 in \cite{Hormander3}.

Guillemin's trace formula (Theorem 8 in \cite{Guillemin77}) states that $\psi \mapsto \Tr^{\flat}(\varphi^*_{\psi}) $ defines a distribution $\Tr^{\flat}(\varphi^*) $ on $\Rnz$, given by
\beq{eq GTF intro}
\Tr^{\flat}(\varphi^*) = \sum_{\gamma(l) = \gamma(0)} \frac{T^{\#}_{\gamma} \delta_l}{|\det(1-P_{\gamma})|}.
\eeq
Here the sum is over all periodic flow curves (modulo time shifts), with all possible periods $l$, and $T^{\#}_{\gamma} $ is the primitive period of such a curve.

One application of Guillemin's trace formula \eqref{eq GTF intro} is that it is the basis of proofs of analytic properties of the \emph{Ruelle dynamical $\zeta$-function} \cite{Shen21} for Anosov flows, such as meromorphic continuation (see Section 2.2 in \cite{GLP13}, or \cite{DZ16}) and homotopy invariance (see Theorem 2 in \cite{DGRS20}).

\subsection*{An equivariant Guillemin trace formula}

Our goal in this paper is to generalise Guillemin's trace formula \eqref{eq GTF intro} to an equivariant version, for proper, cocompact group actions by unimodular, locally compact groups. In another paper \cite{HS25b}, we use this generalisation to construct an eqiuvariant version of the Ruelle dynamical $\zeta$-function. We then compare this with equivariant analytic torsion \cite{HS22a} and investigate an equivariant generalisation of the Fried conjecture \cite{Fried87, Shen21}. In \cite{HS25b}, direct consequences of the equivariant Guillemin trace formula are that the definition of the equivariant Ruelle dynamical $\zeta$-function is independent of choices made, and a relation with the classical Ruelle dynamical $\zeta$-function. Analogously to the non-equivariant case, it is possible that the equivariant Guillemin trace formula is the basis for future results on analytic properties of the equivariant Ruelle dynamical $\zeta$-function.

We no longer assume that $M$ is compact, but suppose that a unimodular, locally compact group $G$ acts properly on $M$, such that $M/G$ is compact. Then there is a function $\chi \in C^{\infty}_c(M)$ such that for all $m \in M$, with respect to a Haar measure $dx$, 
\[
\int_G \chi(xm)\, dx = 1.
\]
Let $g \in G$, and suppose that its centraliser $Z$ is unimodular. Then there is a unique $G$-invariant measure $d(hZ)$ on $G/Z$ compatible with given Haar measures on $G$ and $Z$. In several places, for example \cite{HWW, HW2, PPST21}, the \emph{$g$-trace} of operators with smooth kernels is used to obtain equivariant index theorems in different settings. A version  for discrete groups was used before to construct higher invariants, see for example \cite{Lott99, Wangwang, XieYu}. 
Let $E \to M$ be a $G$-equivariant, Hermitian vector bundle. Let $T$ be an operator from $\Gamma^{\infty}_c(E)$ to $\Gamma^{\infty}(E)$ with a smooth Schwartz kernel $\kappa_T$. If the following converges, then the $g$-trace of $T$ is
\beq{eq g trace intro}
\Tr_g(T) := \int_{G/Z} \int_M\chi(m) \tr(hgh^{-1}\kappa_T(hg^{-1}h^{-1}m,m))\, dm\, d(hZ),
\eeq
where $dm$ is a given $G$-invariant density on $M$.
 
As a common generalisation of the traces  \eqref{eq def flat trace intro} and \eqref{eq g trace intro}, we start by developing the \emph{flat $g$-trace} $\Tr^{\flat}_g$ in Definition \ref{def flat g trace}. This a generalisation of \eqref{eq g trace intro} to operators with distributional Schwartz kernels with suitable transversality properties.

Using this flat $g$-trace, we generalise Guillemin's trace formula \eqref{eq GTF intro} to $G$-equivariant flows satisfying a suitable nondegeneracy condition. This is our main result, Theorem \ref{prop fixed pt gen}. It also involves a lift of $\varphi$ to a $G$-equivariant vector bundle, and composition with a $G$-equivariant endomorphism of this bundle. The basic idea is that the sum over periodic flow curves in \eqref{eq GTF intro} is replaced by flow curves $\gamma$ such that $\gamma(l) = hgh^{-1}\gamma(0)$ for $l \in \Rnz$ and $h \in G$.

\subsection*{Acknowledgements}

We thank Jean-Michel Bismut, Bingxiao Liu, Thomas Schick, Shu Shen and Polyxeni Spilioti for helpful comments and discussions. PH is partially supported by the Australian Research Council, through Discovery Project DP200100729, and by NWO ENW-M grant OCENW.M.21.176.
HS was supported by the Australian Research Council, through grant FL170100020.

\section{Preliminaries and results}

\subsection{The flat $g$-trace}\label{sec flat g trace}

The main result in this paper is
the equivariant Guillemin trace formula, Theorem \ref{prop fixed pt gen}. It  is stated in terms of 
the \emph{flat $g$-trace} that we introduce in this subsection.
%
%
This flat $g$-trace is a common generalisation of the distributional trace used for example in  Theorem 19.4.1 in \cite{Hormander3}, and a $g$-trace defined in terms of orbital integrals, and used for example in \cite{HWW, HW2, PPST21}.

Let  $M$ be a  manifold, on which a unimodular, locally compact group $G$ acts properly  and cocompactly (by which we mean that $M/G$ is compact). 
Fix a $G$-equivariant vector bundle $E \to M$. Fix a Haar measure $dx$ on $G$. 
Let $g \in G$, and let $Z<G$ be ts centraliser. Suppose that $Z$ is unimodular; then there is a $G$-invariant measure $d(hZ)$ on $G/Z$.

For notational simplicity, we fix a $G$-invariant, nonvanishing density $dm$ on $M$, and use this to identify the space $ \Gamma^{-\infty}(E)$ of generalised sections of $E$ with the continuous dual of $\Gamma_c^{\infty} (E^*)$. What follows does not depend on the choice of this density, see for example Remark \ref{rem flat g trace indep metric}. 
We use analogous identifications for other vector bundles as well. For example, we identify 
\[
 \Gamma^{-\infty}( (E \boxtimes E^*)|_{\Delta(M)})\cong \Gamma_c^{\infty} (E^* \boxtimes E)|_{\Delta(M)})'
\]
where the prime on the right hand side denotes the continuous dual space, and $\Delta(M) \subset M \times M$ is the diagonal.


We define a flat $g$-trace of  operators on $\Gamma^{\infty}(E)$ in Definition \ref{def flat g trace}. We later apply this to flows in  \eqref{eq flat g trace flow}. 


The identity operator $\Id_E$ on $E$ defines an element of
\beq{eq isom IdE}
 \Gamma^{\infty}(E \boxtimes E^*|_{\Delta(M)})\cong\Gamma^{\infty}(E^* \boxtimes E|_{\Delta(M)}),
 \eeq
  where the isomorphism is swapping the components in $E$ and $E^*$.
\begin{definition}\label{def fibre trace}
The \emph{fibre-wise trace} of $\kappa \in \Gamma^{-\infty}( (E \boxtimes E^*)|_{\Delta(M)})$ is the distribution $\tr(\kappa) \in \calD'(\Delta(M))$ defined by
\[
\langle \tr(\kappa),f\rangle := \langle \kappa, f \Id_E\rangle,
\]
for $f \in C^{\infty}_c(\Delta(M))$.

\end{definition}

\begin{example}\label{ex fibre trace smooth}
In the case where $\kappa \in \Gamma^{\infty}( (E \boxtimes E^*)|_{\Delta(M)})$ is smooth, let $m \in M$, let $\{e_j\}_{j=1}^{\rank(E)}$ be a basis of $E_m$, and $\{e^j\}_{j=1}^{\rank(E)}$ be the dual basis of $E^*_m$. Then under the isomorphism \eqref{eq isom IdE},
\[
\Id_{E_m}\mapsto \sum_{j} e^j \otimes e_j. 
\] 
Therefore, viewing $\kappa(m,m)$ as an endomorphism of $E_m$, we have
\[
\langle \kappa(m,m), \Id_{E_m}\rangle = \sum_j \langle e^j, \kappa(m,m)e_j \rangle =\tr(\kappa(m,m)),
\]
where $\tr$ is the fibre-wise operator trace.
It follows that $\tr(\kappa)$ is indeed the fibre-wise trace of $\kappa$, and lies in $C^{\infty}(\Delta(M))$.
\end{example}

Let $T\colon \Gamma^{\infty}_c(E) \to \Gamma^{\infty}(E)$ be a $G$-equivariant, continuous linear operator. Let $\kappa$ be its Schwartz kernel. For $x \in G$, we write $x \cdot \kappa$ for the Schwartz kernel of $x \circ T$, where $x$ acts on sections in the usual way. 
Let $\cN^*(\Delta(M)) \to \Delta(M)$ be the conormal bundle of $\Delta(M)$ in $M \times M$.


Because the action by $G$ on $M$ is proper and cocompact, there is a \emph{cutoff function} $\chi \in C^{\infty}_c(M)$ such that for all $m \in M$
\beq{eq cutoff fn}
\int_G \chi(xm)\, dx = 1.
\eeq
Let $p_2 \colon M \times M \to M$ be projection onto the second coordinate. Then $(p_2^*\chi)|_{\Delta(M)}$ has compact support. Note that the function $(p_2^*\chi)|_{\Delta(M)}$ is just the function on $\Delta(M)$ corresponding to $\chi$ via the diagonal diffeomorphism $\Delta\colon M \to \Delta(M)$. The wave front set of a distribution is denoted by $\WF$; see Definition 8.1.2 in \cite{Hormander1}. 
\begin{definition}\label{def flat g trace}
Suppose that for all $h \in G$, 
\beq{eq condition WF K}
\WF(hgh^{-1} \cdot \kappa) \cap \cN^*(\Delta(M)) = \emptyset.
\eeq
Suppose also that for all $f \in C^{\infty}_c(M)$,  the integral
\beq{eq def flat g trace 1}
\int_{G/Z} \langle \tr( (hgh^{-1} \cdot \kappa)|_{\Delta(M)}),  (p_2^*f)|_{\Delta(M)}\rangle \, d(hZ)
\eeq
converges absolutely, and that this defines a distribution on $M$. 
Then we say that $T$ is \emph{flat $g$-trace class}, and define its  \emph{flat $g$-trace}  by
\beq{eq def flat g trace}
\Tr_g^{\flat}(T) := \int_{G/Z} \langle \tr( (hgh^{-1} \cdot \kappa)|_{\Delta(M)}),  (p_2^*\chi)|_{\Delta(M)}\rangle \, d(hZ).
\eeq
\end{definition} 
\begin{remark}
The condition \eqref{eq condition WF K} implies that the restriction $(hgh^{-1} \cdot \kappa)|_{\Delta(M)}$ in \eqref{eq def flat g trace 1} and \eqref{eq def flat g trace} is well-defined. See e.g.\ Theorem 8.2.4 in \cite{Hormander1}.
\end{remark}

\begin{remark} \label{rem flat g trace indep metric}
If we view $\kappa$ as a generalised section in $\Gamma^{-\infty}(E \boxtimes E^*)$, this implicitly involves the density $dm \times dm$. The identification of $\Gamma^{-\infty}(E \boxtimes E^*)$ with $\Gamma_c^{\infty}(E^* \boxtimes E)'$ via the same density, used in \eqref{eq def flat g trace}, cancels out the choice of density in a sense, so that  $\Tr_g^{\flat}(T) $ does not depend on $dm$. Compare Lemma 2.9 in \cite{HS22a} in the case where $\kappa$ is smooth. 
\end{remark}

\begin{example}\label{ex class flat trace}
If $G$ is the trivial group, so $g=e$, then $\chi$ may be taken to be the constant $1$. So the flat $e$-trace is the classical flat trace, used for example in Theorem 19.4.1 in \cite{Hormander3}.
\end{example}
\begin{example}\label{eq Trg smooth}
If $\kappa$ is smooth, then by Example \ref{ex fibre trace smooth}, 
\[
\Tr_g^{\flat}(T) = \int_{G/Z} \int_M \chi(m) \tr(hgh^{-1}\kappa(hg^{-1}h^{-1}m,m))\, dm\, d(hZ).
\]
If $T\chi$ is a trace-class operator on $L^2(E)$ and the integral converges, then this equals the 
\emph{$g$-trace} 
\beq{eq g trace}
\Tr_g(T) := \int_{G/Z} \Tr(hgh^{-1} T\chi)\, d(hZ).
\eeq
This trace has been used in various places, see for example \cite{HS22a, HWW, HW2, Lott99, Wangwang}.
In this sense, the flat $g$-trace $\Tr_g^{\flat}$ is a generalisation of the $g$-trace $\Tr_g$, in the same way that the classical flat trace, see Example \ref{ex class flat trace}, generalises the operator trace in some settings.
\end{example}
\begin{example}
If $g=e$, then $\Tr_e^{\flat} =  \langle \tr(\kappa|_{\Delta(M)}),  (p_2^*\chi)|_{\Delta(M)}\rangle$,  a distributional version of the von Neumann trace \cite{Atiyah76, Wang14}.
\end{example}

\begin{proposition}\label{prop flat g trace indep chi}
The integral \eqref{eq def flat g trace} is independent of the function $\chi \in C^{\infty}_c(M)$ satisfying \eqref{eq cutoff fn}.
\end{proposition}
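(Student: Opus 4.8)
The plan is to recognise the right-hand side of \eqref{eq def flat g trace} as the value of a single $G$-invariant distribution on $M$ evaluated at $\chi$, and then to run the standard averaging argument that uses unimodularity of $G$ to see that this value does not depend on $\chi$. Let $u \in \calD'(M)$ be the distribution provided by the flat $g$-trace class hypothesis, so that
\[
\langle u, f\rangle = \int_{G/Z} \langle \tr( (hgh^{-1} \cdot \kappa)|_{\Delta(M)}),  (p_2^*f)|_{\Delta(M)}\rangle \, d(hZ)
\]
for all $f \in C^{\infty}_c(M)$; note that $u$ itself does not involve $\chi$, and that $\Tr^{\flat}_g(T) = \langle u, \chi\rangle$. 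It therefore suffices to prove that $\langle u, \chi_1 \rangle = \langle u, \chi_2\rangle$ whenever $\chi_1, \chi_2 \in C^{\infty}_c(M)$ both satisfy \eqref{eq cutoff fn}.

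The first step is to show that $u$ is $G$-invariant: $\langle u, y \cdot f\rangle = \langle u, f\rangle$ for all $y \in G$ and $f \in C^{\infty}_c(M)$, where $(y\cdot f)(m) = f(y^{-1}m)$. Fix $y \in G$. Since $d(hZ)$ is $G$-invariant we may substitute $h \mapsto yh$ in the integral defining $\langle u, f\rangle$; as $(yh)g(yh)^{-1} = y(hgh^{-1})y^{-1}$, this replaces $hgh^{-1}\cdot\kappa$ by $(y(hgh^{-1})y^{-1})\cdot\kappa$. Because $T$ is $G$-equivariant, hence commutes with the $G$-action on sections, for any $a \in G$ the operator $(yay^{-1})\circ T$ equals $y \circ (a\circ T) \circ y^{-1}$, so its Schwartz kernel is the image of $a\cdot\kappa$ under the natural action of $y$ on $\Gamma^{-\infty}(E\boxtimes E^*)$ coming from the $G$-equivariant structure (and this action preserves $\Delta(M)$). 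Restriction to $\Delta(M)$ and the fibrewise trace of Definition \ref{def fibre trace} are natural with respect to this action — this is the computation of Example \ref{ex fibre trace smooth} for smooth kernels, together with $G$-invariance of $\Id_E$ and of the density, and it persists under the wave front condition \eqref{eq condition WF K}, which makes the restriction well-defined — so $\tr\bigl((y(hgh^{-1})y^{-1}\cdot\kappa)|_{\Delta(M)}\bigr)$ is the $y$-translate of $\tr\bigl((hgh^{-1}\cdot\kappa)|_{\Delta(M)}\bigr)$ as a distribution on $\Delta(M) \cong M$. Pairing against $(p_2^*f)|_{\Delta(M)}$ and using $G$-invariance of $dm$ transfers this $y$-translate onto $f$, turning $(p_2^*f)|_{\Delta(M)}$ into $(p_2^*(y^{-1}\cdot f))|_{\Delta(M)}$. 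Integrating over $G/Z$ yields $\langle u, f\rangle = \langle u, y^{-1}\cdot f\rangle$, and as $y$ was arbitrary, $u$ is $G$-invariant.

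The second step is the averaging argument. Since $\chi_1$ satisfies \eqref{eq cutoff fn}, for every $m \in M$ we have $\chi_2(m) = \chi_2(m)\int_G \chi_1(xm)\,dx = \int_G \bigl(\chi_2 \cdot (x^{-1}\cdot\chi_1)\bigr)(m)\,dx$. The set of $x \in G$ for which $x\cdot\supp\chi_2$ meets $\supp\chi_1$ is relatively compact by properness of the action, and on it every function $\chi_2 \cdot (x^{-1}\cdot\chi_1)$ is supported in the fixed compact set $\supp\chi_2$; hence $x \mapsto \chi_2 \cdot (x^{-1}\cdot\chi_1)$ is a compactly supported continuous map into $C^{\infty}_c(M)$, and $u$, being a distribution, commutes with the integral over $G$, giving $\langle u, \chi_2\rangle = \int_G \langle u, \chi_2 \cdot (x^{-1}\cdot\chi_1)\rangle\, dx$. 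Applying $G$-invariance of $u$ with the group element $x$, and $x\cdot\bigl(\chi_2 \cdot (x^{-1}\cdot\chi_1)\bigr) = (x\cdot\chi_2)\cdot\chi_1$, the integrand becomes $\langle u, (x\cdot\chi_2)\cdot\chi_1\rangle$. Interchanging integral and $u$ once more, and noting that $\int_G (x\cdot\chi_2)(m)\,\chi_1(m)\,dx = \chi_1(m)\int_G \chi_2(x^{-1}m)\,dx = \chi_1(m)$, because $G$ is unimodular so its Haar measure is inversion-invariant and $\chi_2$ satisfies \eqref{eq cutoff fn}, we conclude $\langle u, \chi_2\rangle = \langle u, \chi_1\rangle$.

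I expect the main obstacle to be the rigorous form of the naturality claim in the first step: one must use the wave front condition \eqref{eq condition WF K} to know that $(hgh^{-1}\cdot\kappa)|_{\Delta(M)}$ and its $y$-translate are defined and that the identity valid for smooth kernels extends — either via continuity of restriction on the space of distributions with wave front set avoiding $\cN^*(\Delta(M))$ in the H\"ormander topology, or by testing both sides against elements of $C^{\infty}_c(\Delta(M))$ directly using Definition \ref{def fibre trace}. By comparison, interchanging $u$ with the Haar integral is routine once properness supplies the needed compact support in the group variable.
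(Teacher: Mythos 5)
Your proof is correct, and its first half coincides with the paper's: your naturality/equivariance computation establishing that
\[
f \mapsto \int_{G/Z} \langle \tr( (hgh^{-1} \cdot \kappa)|_{\Delta(M)}),  (p_2^*f)|_{\Delta(M)}\rangle \, d(hZ)
\]
is a $G$-invariant distribution is exactly the content of the paper's Lemma \ref{lem Trg shift} (proved there for smooth kernels by the substitution $m \mapsto xm$ and equivariance of $T$, then extended by density using Lemma \ref{lem pullback cts Delta} and the continuity of restriction in the H\"ormander topology — the same technical resolution you flag). Where you genuinely diverge is the second half. The paper proves Lemma \ref{lem psi u f}: a $G$-invariant distribution annihilates compactly supported functions with vanishing orbital averages, and applies it to $\chi_1 - \chi_2$; the proof of that lemma goes through Palais' slice theorem, a local decomposition $M = G\times_K N$, and a Fubini argument. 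You instead use the classical double-averaging (unfolding) trick: write $\chi_2 = \int_G \chi_2\cdot(x^{-1}\cdot\chi_1)\,dx$, move the group element across $u$ by invariance, and collapse the other integral using inversion-invariance of Haar measure on the unimodular group $G$. Your route avoids the slice theorem entirely and is more self-contained, at the modest cost of needing $x \mapsto x\cdot\chi$ to be a continuous compactly supported map into $C^{\infty}_c(M)$ so that $u$ commutes with the vector-valued integrals — a regularity point the paper also implicitly relies on elsewhere (e.g.\ in the remark following Lemma \ref{lem psi u f}), and which is harmless here. Both arguments use unimodularity of $G$; yours makes its role (inversion-invariance of $dx$) more visible.
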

This proposition is proved in Subsection \ref{sec indep chi}.

\subsection{Equivariant flows} \label{sec equivar flows}

Let $\varphi$ be the flow of a smooth vector field $u$ on $M$. Suppose that $u$ has no zeroes, and that its flow $\varphi$ is defined for all time. 
Let $G$ be a unimodular locally compact group, acting properly and cocompactly on  $M$.
Suppose that for all $t \in \R$, the flow map $\varphi_t\colon M \to M$ is $G$-equivariant.
This is true in the following special cases.
\begin{lemma}\label{lem geod flow invar}
Suppose that $G$ acts isometrically on a Riemannian manifold $X$, that $M = S(TX)$ is the unit sphere bundle of $TX$, and that $\varphi$ is the geodesic flow. Then $\varphi$ commutes with the action by $G$. 
\end{lemma}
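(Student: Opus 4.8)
The plan is to reduce the statement to the classical fact that an isometry of a Riemannian manifold maps geodesics to geodesics, combined with the uniqueness of geodesics with prescribed initial position and velocity. First I would pin down the $G$-action on $M = S(TX)$: it is the one induced by the derivative maps, so that for $g \in G$ and $(x,v) \in S(TX) \subset TX$ one has $g\cdot (x,v) = (gx, dg_x(v))$. Since $g$ acts isometrically, $\|dg_x(v)\|_{gx} = \|v\|_x$, so this does preserve the unit sphere bundle, and it is the natural $G$-action on $M$ with respect to which equivariance of $\varphi$ is to be understood.

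Next, for $(x,v) \in M$, let $\gamma_{(x,v)}\colon \R \to X$ be the geodesic with $\gamma_{(x,v)}(0) = x$ and $\gamma_{(x,v)}'(0) = v$; this is defined for all time because, by hypothesis, the flow $\varphi$ is, and the geodesic flow is given by $\varphi_t(x,v) = (\gamma_{(x,v)}(t), \gamma_{(x,v)}'(t))$. Since $g\colon X \to X$ is an isometry, it preserves the Levi-Civita connection (equivalently, the energy functional), hence $g \circ \gamma_{(x,v)}$ is again a geodesic. Its value and velocity at $t = 0$ are $gx$ and $dg_x(v)$ respectively, so by uniqueness of geodesics with given initial data,
\[
g \circ \gamma_{(x,v)} = \gamma_{g\cdot(x,v)}.
\]

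Differentiating this identity in $t$ and combining the two relations, we obtain for all $t \in \R$
\[
g\cdot \varphi_t(x,v) = \bigl(g\gamma_{(x,v)}(t),\, dg(\gamma_{(x,v)}'(t))\bigr) = \bigl(\gamma_{g\cdot(x,v)}(t),\, \gamma_{g\cdot(x,v)}'(t)\bigr) = \varphi_t(g\cdot (x,v)),
\]
so $\varphi_t$ commutes with the $G$-action on $M$. There is no serious obstacle in this argument; the only points deserving care are to identify the $G$-action on $M$ correctly as the lifted action on the sphere bundle, and to cite the preservation of geodesics by isometries in a form (via the Levi-Civita connection or the energy functional) that makes the uniqueness step immediate.
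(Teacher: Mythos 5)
Your proof is correct and follows the same route as the paper, which simply observes that isometries map geodesics to geodesics and notes that this implies the claim; you have spelled out the uniqueness-of-geodesics step and the differentiation that the paper leaves implicit.
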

\begin{proof}
As $G$ acts by isometries, its elements map geodesics to geodesics. This implies the claim.
%
\end{proof}

\begin{example} \label{ex lift univ cover}
Let $X$ be a compact manifold. Let $M$ be the universal cover of $X$, acted on by the fundamental group $G$ of $X$. A flow $\varphi_X$ on $X$ has a unique $G$-equivariant lift to a flow $\varphi$ on $M$. The generating vector field $u$ of $\varphi$ has no zeroes if and only if the generating vector field of $\varphi_X$ has no zeroes.
\end{example}

\begin{example}
Suppose that $G$ is a compact Lie group, acting on a compact manifold $X$.
Let $S(\kg)$ be the unit sphere in the Lie algebra $\kg$ of $G$, with respect to an $\Ad(G)$-invariant inner product. Let $M = X \times S(\kg)$. Define the action by  $G$ on $M$, and the flow $\varphi$,  by
\[
\begin{split}
g\cdot (p, Y) &= (gp, \Ad(g)Y);\\
\varphi_t(p,Y) &= (\exp(tY)p, Y),
\end{split}
\] 
for $g \in G$, $p \in X$, $Y \in S(\kg)$ and $t \in \R$. Then $\varphi$ is equivariant with respect to the action by $G$. The generating vector field $u$ is given by $u(p,Y) = (Y^X_p, 0)$, where $Y^X_p$ is the tangent vector at $p \in X$ induced by $Y \in S(\kg)$. It follows that $u$ has no zeroes if and only if the action is locally free.
%
\end{example}

%

Now we assume that $\Phi$ is a $G$-equivariant, fibre-wise linear flow on $E$ that lifts $\varphi$.
For a function $\psi \in C^{\infty}_c(\Rnz)$, consider the operator $\Phi^*_{\psi}$ on $\Gamma^{\infty}(E)$ defined by
\beq{eq def Phi psi}
(\Phi^*_{\psi}s)(m) := \int_{\R} \psi(t) \Phi_{-t} (s(\varphi_t(m)))\, dt,
\eeq
for $s \in \Gamma^{\infty}(E)$ and $m \in M$. 

Let $A \in \End(E)$ be $G$-equivariant, and suppose that $A$ commutes with $\Phi_t$ for all $t \in \R$.
We will obtain a criterion for the operator $A\Phi^*_{\psi}$ to be flat $g$-trace class for $g$-nondegenerate flows and $\psi \in C^{\infty}_c(\Rnz)$, see Corollary \ref{cor g anosov trace cl} and Theorem \ref{prop fixed pt gen}. In that case
we write
\beq{eq flat g trace flow}
\langle \Tr_g^{\flat}(A\Phi^*), \psi \rangle := \Tr_g^{\flat}(A \Phi^*_{\psi}).
\eeq
We will see in Theorem \ref{prop fixed pt gen} that this defines a distribution on $\Rnz$.

Fix $x \in G$, and let $x\cdot AK \in \Gamma^{\infty}_c\bigl (\Rnz \times M \times M, \Rnz \times E^* \boxtimes E \bigr)'$ be defined by
\beq{eq def x K}
\langle x  \cdot AK, \psi \otimes \xi \otimes s\rangle := \int_M \langle \xi(m), (x\cdot A\Phi^*_{\psi}s)(m) \rangle\, dm,
\eeq
for $\psi \in C^{\infty}_c(\Rnz)$, $\xi \in \Gamma^{\infty}_c(E^*)$ and $s\in \Gamma^{\infty}_c(E)$.

\begin{proposition}\label{prop restr K}
Suppose that the flow $\varphi$ is $g$-nondegenerate. Then
for all $h \in G$, 
\[
\WF(hgh^{-1}\cdot AK) \cap \cN^*(\R\setminus\{0\} \times \Delta(M)) = \emptyset.
\]
\end{proposition}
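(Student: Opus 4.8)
The plan is to compute the wave front set of $hgh^{-1}\cdot AK$ explicitly by tracing through the definition \eqref{eq def x K} and using standard calculus of wave front sets under pullbacks, pushforwards, and integration. First I would reduce to a local statement: since $G$ acts properly and $\varphi_t$ is $G$-equivariant, and since conjugating by $h$ and composing with the equivariant $A$ does not change the qualitative structure, it suffices to understand $\WF(g\cdot AK)$ near a point $(t_0, m_0, m_0)$ with $t_0 \neq 0$, and then deal with the general $hgh^{-1}$ by equivariance. The Schwartz kernel $g\cdot AK$ is, up to the bundle factors $A$ and $\Phi$, essentially the pullback of the ``graph of the flow'' distribution $\int_\R \psi(t)\, \delta(\varphi_t(x) - y)\, dt$ twisted by the $g$-action on the second variable; concretely its singular support is contained in the set $\{(t, x, y) : g\varphi_t^{-1}y = x\}$, i.e. the set of $(t,x)$ with $\gamma(t) = g\gamma(0)$ along the flow curve $\gamma$ through $x$.

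The key computation is: at a singular point, the wave front set of $K$ (before the twist) lies in the conormal directions to the submanifold $\{(t,x,y): \varphi_t(x) = y\}$, which is the graph of the flow. The conormal bundle to the graph of $\varphi$ in $\R \times M \times M$, pulled back appropriately, consists of covectors of the form $(\tau, \xi, \eta)$ where $\eta = -(d\varphi_t)^{*,-1}\xi$ (cotangent lift of the flow) and $\tau = -\langle \xi, u(x)\rangle$ is determined by pairing with the generating vector field $u$. Twisting the second variable by $g$ replaces this with the condition coming from $\{g\varphi_t(x) = y\}$: one gets $\eta = -(dg \circ d\varphi_t)^{*,-1}\xi$. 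A covector in this set lies in $\cN^*(\R\setminus\{0\}\times\Delta(M))$ precisely when $\tau = 0$ and $\xi = -\eta$, i.e. when $x = y$ and $\xi$ is fixed by the cotangent lift of $g\circ\varphi_t$ at $x$, and additionally $\langle \xi, u(x)\rangle = 0$. But a fixed covector of the cotangent lift of $g\varphi_t$ at a point $x$ with $g\varphi_t(x) = x$ (so $x$ lies on a periodic-up-to-$g$ flow curve, with $l=t$) corresponds, after quotienting by the flow direction, to a fixed covector of the inverse transpose of the linearised $g$-twisted Poincaré map $P^g_\gamma$ — and the transversality needed is exactly that $1 - P^g_\gamma$ is invertible, which is the $g$-nondegeneracy hypothesis. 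The $G$-equivariance and the absence of zeroes of $u$ are used to rule out the remaining possibility $\langle\xi, u(x)\rangle = 0$ combined with $\xi$ fixed, forcing $\xi = 0$.

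In more detail, the steps I would carry out: (i) write $g\cdot AK$ as $(\pi_{23})_* \bigl( \psi(t)\cdot (\text{twisted graph distribution})\bigr)$ where $\pi_{23}$ forgets nothing but we integrate against $\psi$ in $t$ — actually we keep $t$, so it is a product of the $t$-line with the flow graph, composed with $\mathrm{id}\times g$ on the target; (ii) apply Hörmander's theorem on wave front sets of pullbacks by the diffeomorphism $(t,x,y)\mapsto (t, x, g^{-1}y)$ to reduce to the untwisted case with $g$ absorbed into the identification; (iii) identify $\WF$ of the graph distribution $\int \delta(\varphi_t(x)-y)\,dt$ with the conormal bundle of $\{\varphi_t(x) = y\}\subset \R\times M\times M$, carefully including the $t$-cotangent component $\tau = -\langle\xi, u(x)\rangle$; (iv) intersect this explicit conical set with $\cN^*(\R\setminus\{0\}\times\Delta(M)) = \{(t,x,x;0,\xi,-\xi)\}$ and show the intersection is the zero section, invoking $g$-nondegeneracy (to handle $t\neq 0$ with $g\varphi_t$ having a fixed covector) and $u\neq 0$ together with equivariance; (v) finally handle $hgh^{-1}$ by noting $hgh^{-1}\cdot AK = h_*\circ(g\cdot AK)\circ h^{-1}$ in an appropriate sense, and wave front sets transform by the (invertible) diffeomorphism $h$, while $\cN^*(\R\setminus\{0\}\times\Delta(M))$ is $G$-invariant, so the emptiness of the intersection is preserved.

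The main obstacle I expect is step (iii)–(iv): correctly bookkeeping the cotangent variable $\tau$ dual to $t$ and the precise form of the conormal bundle to the flow graph, and then translating ``$\xi$ is a fixed covector of the cotangent lift of $g\varphi_t$, modulo the flow direction'' into ``$1 - P^g_\gamma$ non-invertible,'' since the Poincaré map lives on the quotient $T_xM/\R u(x)$ and one must be careful that the twisting by $g$ is compatible with the identification $\gamma(t) = g\gamma(0)$ and does not introduce spurious fixed directions coming from $u$ itself. The bundle factors $A$ and $\Phi$ only contribute smooth (indeed, invertible along fibres) coefficients and do not enlarge the wave front set, so they can be dispatched quickly by the fact that multiplication by a smooth section of endomorphisms does not increase $\WF$.
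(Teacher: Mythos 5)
Your proposal follows essentially the same route as the paper: identify $hgh^{-1}\cdot AK$ as a $\delta$-distribution on the ($g$-twisted) graph of the flow, compute its wave front set as the conormal bundle of that graph (including the $\tau$-component $\pm\langle\xi,u\rangle$), intersect with $\cN^*(\Rnz\times\Delta(M))$, and observe that the two resulting conditions --- $\xi$ annihilates $u(m)$ and $\xi$ is fixed by the cotangent lift of $g\circ\varphi_{-t}$ --- force $\xi=0$ by invertibility of $1-P^g_\gamma$ on the quotient, with the general $h$ handled by equivariance. This is exactly the paper's Lemmas on $\Psi_x$, $S_x$, $\cN^*(S_x)$ and the nondegeneracy argument, so the proposal is correct and not a genuinely different proof.
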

The proposition is proved in Subsection \ref{sec flow WF}.

If $\varphi$ is $g$-nondegenerate, then
Proposition \ref{prop restr K} implies that
 $hgh^{-1}\cdot AK$ has a well-defined restriction to $\Rnz \times \Delta(M)$ for all $h \in G$. 
Analogously to Definition \ref{def fibre trace}, we  define 
\[
\tr \left( hgh^{-1}\cdot AK|_{\Rnz \times \Delta(M)}\right) \in \calD'(\Rnz \times \Delta(M))
\]
by
\[
\left\langle \tr \left( hgh^{-1}\cdot AK|_{\Rnz \times \Delta(M)}\right) , \psi \otimes f  \right\rangle := 
\left\langle hgh^{-1}\cdot AK|_{\Rnz \times \Delta(M)} , \psi \otimes f \Id_E \right\rangle, 
\]
for $\psi \in C^{\infty}_c(\Rnz)$ and $f \in C^{\infty}_c(\Delta(M))$.


Proposition \ref{prop restr K} has the following consequence. 
\begin{corollary} \label{cor g anosov trace cl} 
Suppose that the flow $\varphi$ is $g$-nondegenerate, and that the integral
\[
\int_{G/Z} \bigl\langle \tr\bigl(hgh^{-1} \cdot AK|_{\Rnz \times \Delta(M)}\bigr), \psi \otimes (p_2^*f)|_{\Delta(M)} \bigr\rangle\, d(hZ)
\]
converges absolutely for  $\psi \in C^{\infty}_c(\Rnz)$ and $f \in C^{\infty}_c(M)$. Then $A\Phi^*_{\psi}$ is flat $g$-trace class for all such $\psi$, and \eqref{eq flat g trace flow} equals
\[
\langle
\Tr_g^{\flat}(A\Phi^*), \psi \rangle = \int_{G/Z} \bigl\langle \tr\bigl(hgh^{-1} \cdot AK|_{\Rnz \times \Delta(M)}\bigr), \psi \otimes (p_2^*\chi)|_{\Delta(M)} \bigr\rangle\, d(hZ).
\]
\end{corollary}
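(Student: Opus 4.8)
The plan is as follows. First I would identify, for each $x \in G$, the Schwartz kernel $x\cdot\kappa_\psi$ of $x\circ A\Phi^*_\psi$ with the partial pairing of $x\cdot AK$ against $\psi$ in the $\R$-variable, and combine this with Proposition \ref{prop restr K} to verify the wave front set condition \eqref{eq condition WF K} for $\kappa_\psi$. Then I would show that restriction to the diagonal commutes with integration against $\psi$, so that the integrand of \eqref{eq def flat g trace 1} for $T = A\Phi^*_\psi$ agrees term-by-term with the integrand in the statement; absolute convergence, and the fact that a distribution on $M$ results, are then exactly the hypothesis. The main obstacle is this last interchange of restriction and integration, which must be justified microlocally.

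For the kernel identification, comparing \eqref{eq def Phi psi}, \eqref{eq def x K} and the definition of a Schwartz kernel shows that, for $\xi \in \Gamma^{\infty}_c(E^*)$ and $s \in \Gamma^{\infty}_c(E)$,
\[
\langle x\cdot\kappa_\psi,\ \xi \otimes s\rangle = \langle x\cdot AK,\ \psi \otimes \xi \otimes s\rangle ,
\]
that is, $x\cdot\kappa_\psi = q_*\bigl(\psi\cdot(x\cdot AK)\bigr)$, where $q\colon \Rnz \times M \times M \to M \times M$ is the projection and $\psi$ is regarded as a function of the $\R$-variable only. Since $\supp\psi$ is compact, $q$ is proper on $\supp\bigl(\psi\cdot(x\cdot AK)\bigr)$, so this fibre integral is defined, and the standard bound on wave front sets under integration along a fibre (\cite{Hormander1}, Section~8.2) gives
\[
\WF(x\cdot\kappa_\psi) \subseteq \bigl\{(m,m';\zeta,\zeta') : (t,m,m';0,\zeta,\zeta') \in \WF(x\cdot AK)\ \text{for some}\ t \in \supp\psi\bigr\}.
\]
Applying this with $x = hgh^{-1}$: an element of $\WF(hgh^{-1}\cdot\kappa_\psi) \cap \cN^*(\Delta(M))$ would have the form $(m,m;\zeta,-\zeta)$ with $\zeta \neq 0$, forcing $(t,m,m;0,\zeta,-\zeta) \in \WF(hgh^{-1}\cdot AK)$ for some $t \in \supp\psi \subseteq \Rnz$; but that covector lies in $\cN^*(\Rnz \times \Delta(M))$, contradicting Proposition \ref{prop restr K}. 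Hence \eqref{eq condition WF K} holds for $\kappa_\psi$, so $(hgh^{-1}\cdot\kappa_\psi)|_{\Delta(M)}$ is well-defined for every $h \in G$.

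The interchange step is the commutation of pullback along the diagonal $M \hookrightarrow M \times M$ with integration over the $\R$-factor. Writing $\iota\colon \Rnz \times M \to \Rnz \times M \times M$ and $\iota_\Delta\colon M \to M \times M$ for the diagonal embeddings and $\pi_M\colon \Rnz \times M \to M$ for the projection, the square with arrows $\iota$, $\iota_\Delta$, $q$, $\pi_M$ is Cartesian, and I would establish the base-change identity $\iota_\Delta^*\circ q_* = (\pi_M)_*\circ\iota^*$ on distributions whose wave front set avoids $\cN^*(\Rnz \times \Delta(M))$. This holds by the usual argument: smooth sections are dense in that space in the topology for which fibre integration along $\R$ and restriction to a submanifold are continuous, and for smooth sections the identity is Fubini's theorem; the two restrictions appearing here are legitimate because $\WF(hgh^{-1}\cdot AK) \cap \cN^*(\Rnz\times\Delta(M)) = \emptyset$ (Proposition \ref{prop restr K}) and because the wave front bound of the previous paragraph keeps $\WF\bigl(q_*(\psi\cdot(hgh^{-1}\cdot AK))\bigr)$ off $\cN^*(\Delta(M))$. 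Applying this to $hgh^{-1}\cdot AK$, then taking the fibre-wise trace of Definition \ref{def fibre trace} (which commutes with everything, being the pairing with $\Id_E$) and pairing with $(p_2^*f)|_{\Delta(M)}$, yields
\[
\bigl\langle \tr\bigl((hgh^{-1}\cdot\kappa_\psi)|_{\Delta(M)}\bigr),\ (p_2^*f)|_{\Delta(M)}\bigr\rangle = \bigl\langle \tr\bigl(hgh^{-1}\cdot AK|_{\Rnz\times\Delta(M)}\bigr),\ \psi\otimes(p_2^*f)|_{\Delta(M)}\bigr\rangle
\]
for every $f \in C^{\infty}_c(M)$, and the same with $\chi$ in place of $f$.

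Finally, integrating over $G/Z$: by the last display, the integral \eqref{eq def flat g trace 1} for $T = A\Phi^*_\psi$ equals $\int_{G/Z} \langle \tr(hgh^{-1}\cdot AK|_{\Rnz\times\Delta(M)}),\ \psi\otimes(p_2^*f)|_{\Delta(M)}\rangle\, d(hZ)$, which converges absolutely by hypothesis, and depends on $f$ as a distribution on $M$ (for each $h$ the integrand is a distribution in $f$, and the absolute convergence is locally uniform on bounded subsets of $C^{\infty}_c(M)$ with common compact support); hence $A\Phi^*_\psi$ is flat $g$-trace class. Taking $f = \chi$ and using Definition \ref{def flat g trace} together with \eqref{eq flat g trace flow} gives
\[
\langle \Tr_g^{\flat}(A\Phi^*), \psi\rangle = \Tr_g^{\flat}(A\Phi^*_\psi) = \int_{G/Z} \bigl\langle \tr\bigl(hgh^{-1}\cdot AK|_{\Rnz\times\Delta(M)}\bigr),\ \psi\otimes(p_2^*\chi)|_{\Delta(M)}\bigr\rangle\, d(hZ),
\]
which is the claimed formula. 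As indicated, the delicate point is the microlocal base-change identity; the remaining work is unwinding Definition \ref{def flat g trace} and transporting the wave front bound of Proposition \ref{prop restr K} through the partial pairing with $\psi$.
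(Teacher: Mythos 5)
Your argument is correct and takes essentially the route the paper intends: the paper states this corollary without proof, as an immediate consequence of Proposition \ref{prop restr K}, and your steps --- identifying $x\cdot\kappa_\psi$ as the partial pairing of $x\cdot AK$ with $\psi$, pushing the wave front bound of Proposition \ref{prop restr K} through the fibre integration over $\R$, and the density/Fubini argument commuting diagonal restriction with that integration --- are exactly the standard microlocal details being left implicit. The only soft spot is your parenthetical justification that the hypothesis yields a distribution in $f$ (locally uniform absolute convergence is asserted rather than derived), but this matches the level of detail at which the paper itself treats that clause of Definition \ref{def flat g trace}.
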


\subsection{Relation with the classical flat trace}

In Example \ref{ex class flat trace}, we saw that for the trivial group, the 
 flat $g$-trace of Definition \ref{def flat g trace} reduces to the classical flat trace.
However, there is also a more refined relation between the flat $g$-trace and the classical flat trace, in terms of universal covers. 

We consider the setting of Example \ref{ex lift univ cover}. 
Suppose that $M$ is the universal cover of a compact manifold $X$, and $G = \Gamma = \pi_1(X)$. Let $\varphi_X$ be a flow on $X$, and  $\Phi_X$ a fibre-wise linear lift of $\varphi_X$ to $E_X$.
Let  $\varphi$ be the lift of $\varphi_X$ to $M$, and let $\Phi$ be the $\Gamma$-equivariant lift of $\Phi_X$ to $E$.

 Let $u_X$ be the generating vector field of $\varphi_X$, and let $u$ be the generating vector field of $\varphi$.
 
Let $\Phi_X$ be the flow on $E_X = E/\Gamma \to X$ induced by a $\Gamma$-equivariant lift $\Phi$ of $\varphi$ to $E$, and $K_X$ the corresponding Schwartz kernel. 
 Let $q_M\colon M \to X$ be the covering map. We use similar notation for the covering maps between other spaces, such as $q_{\Rnz \times  M \times M}\colon \Rnz \times  M \times M \to \Rnz \times  X \times X$. 

\begin{lemma}\label{lem nondeg M X}
Let $t \in \Rnz$. Then the following two conditions are equivalent.
\begin{enumerate}
\item
For all $x \in X$ such that $(\varphi_X)_t(x)=x$, we have 
on $T_xX/ \R u_X(x)$ that
\beq{eq det X}
\det\bigl(T_{x}(\varphi_X)_t - 1 \bigr)\not=0.
\eeq
\item
For all $\gamma \in \Gamma$, and all $m \in M$ such that $\varphi_t(m) = \gamma m$, we have 
on $T_mM/\R u(m)$ that
\beq{eq det M}
\det\bigl(T_{m}( \varphi_t\circ \gamma^{-1}) - 1 \bigr)\not=0.
\eeq
\end{enumerate}
\end{lemma}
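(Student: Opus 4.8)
The plan is to reduce the equivalence to a bijective correspondence between periodic points of $\varphi_X$ and the pairs $(m,\gamma)$ appearing in condition (2), together with the observation that under the covering map the two linearised return maps are conjugate. I would begin by recording the relevant compatibilities. Since $\varphi$ lifts $\varphi_X$, the covering map $q_M \colon M \to X$ satisfies $q_M \circ \varphi_t = (\varphi_X)_t \circ q_M$ and carries $u$ to $u_X$; since $\varphi$ is $\Gamma$-equivariant, $\varphi_t \circ \gamma^{-1} = \gamma^{-1}\circ\varphi_t$ for every $\gamma\in\Gamma$; and $\Gamma$ acts freely and transitively on each fibre of $q_M$.

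Next I would set up the correspondence. Given $(m,\gamma)$ with $\varphi_t(m)=\gamma m$, applying $q_M$ gives $(\varphi_X)_t(q_M(m)) = q_M(\gamma m) = q_M(m)$, so $x := q_M(m)$ is a periodic point of $\varphi_X$ of period $t$. Conversely, given a periodic point $x$ of $\varphi_X$ and any $m \in q_M^{-1}(x)$, one has $q_M(\varphi_t(m)) = (\varphi_X)_t(x) = x = q_M(m)$, so $\varphi_t(m) = \gamma m$ for a unique $\gamma\in\Gamma$. Thus the pairs $(m,\gamma)$ in condition (2) are exactly the points lying over the periodic points of $\varphi_X$, each periodic point $x$ being covered by the pairs $(m,\gamma)$ with $m\in q_M^{-1}(x)$.

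Then I would linearise. Fix such a pair $(m,\gamma)$ and put $x = q_M(m)$. From $\varphi_t\circ\gamma^{-1} = \gamma^{-1}\circ\varphi_t$ and equivariance, $(\varphi_t\circ\gamma^{-1})(m) = \gamma^{-1}\varphi_t(m) = m$, and $\varphi_t\circ\gamma^{-1}$ maps the flow line through $m$ to itself and fixes $u(m)$, so $T_m(\varphi_t\circ\gamma^{-1})$ preserves $\R u(m)$ (similarly $T_x(\varphi_X)_t$ fixes $u_X(x)$). Composing the covering relations gives $q_M\circ(\varphi_t\circ\gamma^{-1}) = (\varphi_X)_t\circ q_M$, and since $q_M$ is a local diffeomorphism, differentiating at $m$ yields
\[
T_m q_M \circ T_m(\varphi_t\circ\gamma^{-1}) = T_x(\varphi_X)_t \circ T_m q_M ,
\]
where $T_m q_M \colon T_mM \to T_xX$ is an isomorphism carrying $\R u(m)$ onto $\R u_X(x)$. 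Hence $T_m q_M$ descends to an isomorphism $T_mM/\R u(m) \to T_xX/\R u_X(x)$ conjugating the endomorphism induced by $T_m(\varphi_t\circ\gamma^{-1})$ to that induced by $T_x(\varphi_X)_t$; in particular the determinants in \eqref{eq det M} and \eqref{eq det X} agree, and they are independent of the choice of $m$ in $q_M^{-1}(x)$. Combining this with the correspondence of the previous step gives (1) $\Leftrightarrow$ (2).

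The argument is essentially formal, so I do not anticipate a serious obstacle; the only points needing care are that $\Gamma$ acts freely and transitively on fibres (so that $\gamma$ is well defined from $m$ and $x$), and that the conjugation by $T_m q_M$ is compatible with passing to the quotients by the flow directions, so that the nonvanishing of the determinant really is transported between the two conditions.
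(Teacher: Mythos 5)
Your argument is correct and follows essentially the same route as the paper: both establish the correspondence between fixed points of $(\varphi_X)_t$ and pairs $(m,\gamma)$ with $\varphi_t(m)=\gamma m$, and both identify the two determinants via the conjugation $T_m q_M \circ T_m(\varphi_t\circ\gamma^{-1}) = T_x(\varphi_X)_t\circ T_m q_M$ together with $T_m q_M(u(m)) = u_X(x)$. Your write-up is in fact slightly more explicit than the paper's about the bijection between periodic points and pairs and about independence of the choice of lift $m$.
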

\begin{proof}
Let $t \in \Rnz$, $m \in M$ and $\gamma \in \Gamma$ be such that $ \varphi_t(m) = \gamma m$. Write $x:= \Gamma m$. The  the equality $u_X(x) = T_mq(u(m))$ and commutativity of the  diagram below imply that the determinants \eqref{eq det X} and $\eqref{eq det M}$ are equal:
\[
\xymatrix{
T_mM \ar[r]^-{T_m \gamma^{-1}} \ar[d]_-{T_mq_M}^-{\cong}& T_{\gamma^{-1}m}M \ar[r]^-{T_{\gamma^{-1}m}\varphi_t} & T_mM  \ar[d]_-{T_mq_M}^-{\cong}\\
T_xX \ar[rr]^-{T_x (\varphi_X)_t} & & T_xX.
}
\]
\end{proof}

\begin{proposition}\label{prop KX}
Under the equivalent conditions of Lemma \ref{lem nondeg M X},
\[
\Tr^{\flat}(\Phi_X) = \sum_{(\gamma)}\Tr^{\flat}_{\gamma}(\Phi),
\]
where the sum is over the conjugacy classes in $\Gamma$. In particular, the sum on the right converges in $\calD'(\Rnz)$.
\end{proposition}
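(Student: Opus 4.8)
The plan is to relate the classical flat trace on $X$ to the flat $\gamma$-traces on $M$ by unfolding the integral over $X$ into an integral over $M$ weighted by the cutoff function, and then organizing the resulting sum over $\Gamma$ by conjugacy classes. Concretely, I would start from the definition of $\Tr^{\flat}(\Phi_X)$ as $\langle \tr(K_X|_{\Rnz \times \Delta(X)}), p_2^*1 \rangle$, where $K_X$ is the Schwartz kernel of $(\Phi_X)^*_{\psi}$ (tested against $\psi \in C^\infty_c(\Rnz)$). The covering map $q_M \colon M \to X$ lets us pull back, and since $\chi$ is a cutoff function on $M$, the pushforward $\sum_{\gamma \in \Gamma} \chi(\gamma^{-1} m)$ equals $1$; in the covering-space setting the sum over $\Gamma$ plays the role of the integral over the discrete group, so $\langle \tr(K_X|_{\Rnz\times\Delta(X)}), \psi \otimes 1\rangle$ can be rewritten, after lifting to $M$, as a sum over $\gamma \in \Gamma$ of local contributions on $M$. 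The key identity to establish is that the lift of the Schwartz kernel of $(\Phi_X)^*_\psi$ to $M \times M$ decomposes as $\sum_{\gamma \in \Gamma} \gamma \cdot K_\psi$ (in the notation of \eqref{eq def x K}, with $A = \Id$), where $K_\psi$ is the kernel of $\Phi^*_\psi$ on $M$; this is the standard relation between a kernel downstairs and the $\Gamma$-periodization of the kernel upstairs, and it holds at the level of wave front sets too so the restriction to the diagonal is compatible.

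Next I would substitute this decomposition into $\Tr^{\flat}(\Phi_X)$. The restriction to $\Rnz \times \Delta(X)$ pulls back to the restriction to $\Rnz \times \Delta(M)$ after accounting for the $\Gamma$-action: the diagonal $\Delta(X)$ lifts to the union over $\gamma$ of the "$\gamma$-twisted diagonals" $\{(\gamma m, m)\}$, and the term $\gamma \cdot K_\psi$ restricted to the ordinary diagonal $\Delta(M)$ corresponds, after the relabeling $m \mapsto \gamma m$ in the first variable, to evaluating $K_\psi$ on the $\gamma$-twisted diagonal — which is exactly what enters $\Tr^{\flat}_\gamma(\Phi)$ via Corollary \ref{cor g anosov trace cl} with $g = \gamma$, $Z = Z_\Gamma(\gamma)$. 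After pairing with $p_2^*\chi$ on $M$, the sum over all $\gamma \in \Gamma$ regroups: fixing a conjugacy class $(\gamma_0)$, the elements $\gamma = h\gamma_0 h^{-1}$ for $h$ ranging over $\Gamma/Z_\Gamma(\gamma_0)$ together contribute precisely the integral (here a sum) over $G/Z$ that defines $\Tr^{\flat}_{\gamma_0}(\Phi)$ in \eqref{eq def flat g trace}. Here I would use that the choice of cutoff $\chi$ does not matter (Proposition \ref{prop flat g trace indep chi}), and that $\Gamma$ is discrete and unimodular so the measure $d(hZ)$ is just counting measure on $\Gamma/Z_\Gamma(\gamma_0)$.

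The nondegeneracy hypothesis is used exactly to make all restrictions to diagonals well-defined: condition (1) of Lemma \ref{lem nondeg M X} is the classical nondegeneracy of $\varphi_X$ (ensuring $\Tr^{\flat}(\Phi_X)$ exists by the non-equivariant theory), and the equivalent condition (2) is precisely $\gamma$-nondegeneracy of $\varphi$ for every $\gamma \in \Gamma$, which is what Proposition \ref{prop restr K} needs so that each $\Tr^{\flat}_\gamma(\Phi)$ is defined. Convergence in $\calD'(\Rnz)$ of the sum over conjugacy classes then comes for free: the left-hand side $\Tr^{\flat}(\Phi_X)$ is a well-defined distribution on $\Rnz$ by Guillemin's theorem, and we have exhibited it as the sum of the right-hand terms, so the sum converges to it. The main obstacle I anticipate is the bookkeeping in the decomposition step: one must check that the $\Gamma$-periodization $\sum_\gamma \gamma \cdot K_\psi$ converges (locally finitely, using properness of the $\Gamma$-action together with the compact $t$-support of $\psi$, which bounds how far the flow can move points and hence restricts which $\gamma$ can contribute near a given compact set) and that wave front sets behave well under this sum, so that restricting to the diagonal commutes with summing over $\Gamma$. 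Once that local finiteness is in hand — and it should follow from properness plus the fact that $\supp \psi$ is a compact subset of $\Rnz$, which together force $\varphi_t(m)$ to stay in a fixed compact-modulo-$\Gamma$ region — the rest is a matter of carefully matching up the two sides using the definitions.
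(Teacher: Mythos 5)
Your proposal is correct and follows essentially the same route as the paper: the central identity $q_{\Rnz\times M\times M}^*K_X=\sum_{\gamma\in\Gamma}\gamma\cdot K$ is exactly the paper's Lemma \ref{lem KX KM}, and the subsequent restriction to the diagonal, pairing with the periodized cutoff $\sum_{\gamma}\gamma\cdot(p_2^*\chi)|_{\Delta(M)}=1$, and regrouping of the sum over $\Gamma$ into conjugacy classes with counting measure on $\Gamma/Z_\Gamma(\gamma_0)$ is precisely how the paper concludes. The local-finiteness and wave-front-set concerns you flag are handled in the paper by the compact-support computation in the proof of Lemma \ref{lem KX KM} together with Proposition \ref{prop restr K}, as you anticipated.
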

This proposition will be proved in Subsection \ref{sec class flat trace}

\subsection{The equivariant Guillemin trace formula}

As in Subsection \ref{sec equivar flows}, we consider an equivariant flow $\varphi$ on $M$, generated by a nonvanishing vector field $u$.

\begin{definition}\label{def length spectrum}
Let $x \in G$.
\begin{itemize}
\item[(a)]
 The \emph{$x$-delocalised length spectrum} of $\varphi$ is
\beq{eq def Lg}
L_{x}(\varphi) := \{l \in \Rnz; \exists m \in M: \varphi_l(m) = x m\}.
\eeq
\end{itemize}
Let
 $l \in L_{x}(\varphi)$.
 \begin{itemize}
 \item[(b)]
A \emph{flow curve}  of $\varphi$ is a curve $\gamma\colon \R \to M$ such that $\gamma(t) = \varphi_t(\gamma(0))$ for all $t \in \R$. A flow curve $\gamma$ is said to be
  \emph{$(x, l)$-periodic} if $\gamma(l) = x \gamma(0)$. 
 There is an action by $\R$ on the set $\tilde \Gamma^{x}_l(\varphi)$ of such curves, defined by
 \beq{eq action R Gamma}
( s\cdot \gamma)(t) = \gamma(t+s),
 \eeq
 for $\gamma \in \tilde \Gamma^{x}_l(\varphi)$ and $s,t \in \R$. We write
 \beq{eq def Gamma x l}
 \Gamma^{x}_l(\varphi) := \tilde \Gamma^{x}_l(\varphi)/\R.
 \eeq
 We will often tacitly choose representatives of classes in $\Gamma^{x}_l(\varphi)$, and implicitly claim that what follows is independent of the choice of this representative; compare Remark \ref{rem indep gamma} below.
 \end{itemize}
 
 Let
 $\gamma \in \Gamma_l^{x}(\varphi)$.
 \begin{itemize}
 \item[(c)]
The  \emph{linearised $x$-delocalised Poincar\'e map}  of $\gamma$ is the map
\beq{eq def Poincare}
P^{x}_{\gamma} \colon T_{\gamma(0)}M/\R u(\gamma(0)) \to T_{\gamma(0)}M/\R u(\gamma(0))
\eeq
induced by $T_{\gamma(0)} (\varphi_{l}\circ x^{-1})$. 
\item[(d)] The \emph{$\chi$-primitive period} of $\gamma$ is
\beq{eq def Tgamma}
T_{\gamma} := \int_{I_{\gamma}} \chi(\gamma(s))\, ds
\eeq
(when convergent), 
where $I_{\gamma}$ is an interval such that $\gamma|_{I_{\gamma}}$ is a bijection onto the image of $\gamma$, modulo sets of measure zero. See Lemma \ref{lem conv Tgamma} for a sufficient condition for convergence of the integral.
\end{itemize}
\end{definition}



\begin{remark}
As the domain of a flow curve is taken to be the real line, a single  flow curve $\gamma$ can occur in more than one of the sets $\Gamma_l^x(\varphi)$, if $\gamma(l) = x\gamma(0)$ for more than one combination of $l\in \Rnz$ and $x \in G$. 
%
For a flow curve $\gamma \in \Gamma_l^g(\varphi)$, the maps $P^x_{\gamma}$ depend on $l$, even though this is not reflected in the notation.
\end{remark}

\begin{example}\label{ex Thgam ZR}
Let $M = G = \R$, and consider the flow $\varphi_t(m) = m+t$. Then the only flow curve of $\varphi$, modulo the action \eqref{eq action R Gamma}, is $\gamma(t) = t$.
Let $x \in \Rnz$. Then $L_x(\varphi) = \{x\}$, and $\Gamma_x^x(\varphi) = \{\gamma\}$. Here we see that it is relevant to allow negative numbers in $L_x(\varphi)$. Now  $I_{\gamma} = \R$,  so
\[
T_{\gamma} = \int_{\R}\chi(s)\, ds = 1.
\]
\end{example}

\begin{remark}
It is possible that the $x$-delocalised length spectrum \eqref{eq def Lg} is empty for some $x \in G$, but not for others. It is empty when $g=e$ in Example \ref{ex Thgam ZR}, for example. 
\end{remark}

\begin{example}\label{ex prim per e}
If $G$ is compact (for example, trivial), then we may take $\chi$ to be the constant function $1$. 
If $\gamma$ is periodic (see part (b) of Lemma \ref{lem conv Tgamma} for a sufficient condition), then $I_{\gamma}$ is bounded. 
Then $T_{\gamma}$ is the length of  $I_{\gamma}$, which is the primitive period 
\[
T_{\gamma}^{\#} = \min\{t>0; \gamma(t) = \gamma(0)\} 
\]
of $\gamma$.
\end{example}

\begin{example}
For the geodesic flow on the universal cover of a negatively curved manifold, the $\chi$-primitive period of a geodesic equals its primitive period in the usual sense, for well-chosen $\chi$. (See the proof of Lemma 3.19 in \cite{HS25b}.)
\end{example}

Recall that we fixed $g \in G$. It follows from equivariance of $\varphi$ that
$T_m (\varphi_l \circ g^{-1})(u(m)) = u(m)$ for all $m \in M$ such that $\varphi_l(m) = gm$. This
implies that $T_{m}(\varphi_{l}\circ g^{-1}) - 1$ descends to a linear endomorphism of $T_{m}M/\R u(m)$, which we denote by $ (T_{m}(\varphi_{l}\circ g^{-1}) - 1)|_{T_{m}M/\R u(m)}$. 
\begin{definition}\label{def nondeg}
The flow $\varphi$ is \emph{$g$-nondegenerate} if for all $l \in \Rnz$ and   $m \in M$ such that $ \varphi_l(m) = gm$, 
\beq{eq flow nondeg}
\det\bigl( (T_{m}(\varphi_{l}\circ g^{-1}) - 1)|_{T_{m}M/\R u(m)} \bigr)\not=0.
\eeq
\end{definition}
In other words, $\varphi$ is $g$-nondegenerate if the map $P_{\gamma}^g - 1_{T_{\gamma(0)}M/\R u(\gamma(0))}$ is invertible for all $l\in L_g(\varphi)$ and $\gamma \in \Gamma_l^g(\varphi)$.
%

\begin{remark}
If $g=e$, then an Anosov flow is $e$-nondegenerate, but the latter condition is weaker. (The Anosov condition involves all flow curves, whereas $e$-nondegeneracy is only about closed curves.) In particular, the $g$-nondegeneracy condition does not require the choice of a Riemannian metric. 
\end{remark}

The following lemma generalises a well-known criterion for fixed points of group actions, or flow curves of a given flow, to be isolated. See page 377 of \cite{ABI} for a comment on the non-equivariant case. 
\begin{lemma}\label{lem Gammagl discrete}
Suppose that  $\varphi$ is {$g$-nondegenerate} and
let  $l \in L_{g}(\varphi)$. 
%
Then the set $\Gamma^{g}_l(\varphi)$ is discrete, in the sense that it is countable, and the images of the curves in $\Gamma^{g}_l(\varphi)$ together form the closed subset  $M^{g^{-1}\varphi_l} \subset M$ of points fixed by $g^{-1} \circ \varphi_l$.
\end{lemma}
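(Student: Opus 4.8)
The plan is to prove the two assertions of Lemma~\ref{lem Gammagl discrete} separately: first the identification of the union of images of curves in $\Gamma^g_l(\varphi)$ with the fixed point set $M^{g^{-1}\varphi_l}$, then discreteness.

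For the identification, note that a flow curve $\gamma$ is $(g,l)$-periodic precisely when $\varphi_l(\gamma(0)) = g\gamma(0)$, i.e.\ $g^{-1}\varphi_l(\gamma(0)) = \gamma(0)$, so $\gamma(0) \in M^{g^{-1}\varphi_l}$. Because $\varphi_l$ is $G$-equivariant and the flow curves are genuine flow curves (defined on all of $\R$), if $\gamma(0)$ is fixed by $g^{-1}\varphi_l$ then every point $\gamma(t) = \varphi_t(\gamma(0))$ is also fixed: indeed $g^{-1}\varphi_l(\varphi_t(\gamma(0))) = \varphi_t(g^{-1}\varphi_l(\gamma(0))) = \varphi_t(\gamma(0))$, using that $\varphi_l$ and $g$ commute with $\varphi_t$. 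Conversely, any $m \in M^{g^{-1}\varphi_l}$ lies on the flow curve $t \mapsto \varphi_t(m)$, which is $(g,l)$-periodic. Hence the union of images of curves in $\tilde\Gamma^g_l(\varphi)$ is exactly $M^{g^{-1}\varphi_l}$, and passing to the $\R$-quotient does not change the union of images. That $M^{g^{-1}\varphi_l}$ is closed is immediate since $g^{-1}\varphi_l$ is continuous.

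For discreteness, the key point is that $g$-nondegeneracy forces the fixed point set to be a union of isolated flow curves. Fix $m \in M^{g^{-1}\varphi_l}$ and consider the smooth map $F := g^{-1}\circ\varphi_l\colon M \to M$, which fixes $m$ and whose derivative $T_mF$ fixes the line $\R u(m)$ (by equivariance of $\varphi$, as noted before Definition~\ref{def nondeg}) and induces an endomorphism of $T_mM/\R u(m)$ with $\det(P^g_\gamma - 1) \neq 0$ by hypothesis. I would argue that near $m$ the fixed point set of $F$ is exactly the local flow line through $m$: on a transversal slice $S$ to the flow at $m$, the map $F$ restricts (after composing with a short-time flow correction to land back in $S$, using that $u$ is nonvanishing so a local flow box exists) to a smooth map $S \to S$ whose linearisation at $m$ is $P^g_\gamma$, and $1 - P^g_\gamma$ invertible means $m$ is an isolated fixed point of this map on $S$ by the inverse function theorem applied to $\mathrm{id} - (\text{slice map})$. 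Therefore in a flow-box neighbourhood of $m$ the set $M^{g^{-1}\varphi_l}$ coincides with the single flow segment through $m$. Consequently the images of the curves in $\Gamma^g_l(\varphi)$ are pairwise disjoint embedded one-dimensional submanifolds (or circles) that are locally isolated from one another, so there are at most countably many of them (a second-countable manifold contains at most countably many pairwise disjoint open-in-themselves flow segments that are separated), giving countability of $\Gamma^g_l(\varphi)$; and the union being a closed set that is locally a single arc is exactly the discreteness statement.

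The main obstacle I anticipate is making the ``slice map'' argument clean in the equivariant/non-compact setting: one must choose the transversal $S$ and the return-to-slice correction carefully so that the linearisation of the induced self-map of $S$ is genuinely $P^g_\gamma$ and not some conjugate-with-extra-terms, and one must check that $F$ actually maps a neighbourhood of $m$ in $S$ into a flow box where the correction is defined --- this uses $\varphi_l(m) = gm$ together with continuity, shrinking neighbourhoods as needed. Once the local normal form is established, countability follows from second countability of $M$ (which holds since $M/G$ is compact and $G$ is locally compact, hence $M$ is second countable) by a standard covering argument, and the ``closed subset'' claim is already handled above.
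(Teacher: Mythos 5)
Your proposal is correct and follows essentially the same route as the paper: both arguments use $g$-nondegeneracy to show that $\ker(T_m(g^{-1}\circ\varphi_l)-1)=\R u(m)$, so that $M^{g^{-1}\circ\varphi_l}$ is a one-dimensional submanifold whose connected components are exactly the images of the flow curves in $\Gamma^g_l(\varphi)$, with countability from second countability of $M$. Your transversal-slice/inverse-function-theorem step is in fact a more detailed justification of the point the paper states tersely (that tangency to the fixed-point set is equivalent to lying in the kernel of $T_m(g^{-1}\circ\varphi_l)-1$), so no gap.
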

\begin{proof}
Since $l \in L_g(\varphi)$, the fixed-point set  $M^{g^{-1} \circ \varphi_l}$ is nonempty.
Let $m \in M^{g^{-1} \circ \varphi_l}$.
%
A vector $v \in T_mM$ is tangent to $M^{g^{-1} \circ \varphi_l}$ if and only if 
 $v \in \ker(T_{m}(g^{-1}\circ \varphi_l) - 1) = \R u(m)$. So
\[
T_m M^{g^{-1} \circ \varphi_l} = \R u(m).
\]
which implies that $M^{g^{-1} \circ \varphi_l}$ is one-dimensional.

Let $\gamma(t) = \varphi_t(m)$ be the flow curve through $m$. This curve lies in the one-dimensional submanifold $M^{g^{-1} \circ \varphi_l}$, and hence its image equals the connected component of  $M^{g^{-1}\circ \varphi_l}$ containing $m$. We find that the images of all flow curves in $\Gamma^g_l(\varphi)$ are connected components of the one-dimensional manifold $M^{g^{-1} \circ \varphi_l}$.
\end{proof}

\begin{lemma}\label{lem conv Tgamma}
Suppose that $\varphi$ is $g$-nondegenerate. Let $l \in L_g(\varphi)$ and $\gamma \in \Gamma^g_l(\varphi)$.
\begin{enumerate}
\item[(a)] The expression \eqref{eq def Tgamma} for $T_{\gamma}$ converges.
\item[(b)] If $M$ is compact, then $\gamma$ is periodic.
\item[(c)] If $g$ lies in a compact subgroup of $G$, then $\gamma$ is periodic.
\end{enumerate}
\end{lemma}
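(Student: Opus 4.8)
The plan is to handle the three parts in the order (b), (c), (a), since the finiteness of $T_\gamma$ in part (a) follows easily once we know the relevant compactness/periodicity statements, and the argument for (c) will reuse the idea behind (b). For part (b), suppose $M$ is compact. By Lemma \ref{lem Gammagl discrete}, $g$-nondegeneracy forces the fixed-point set $M^{g^{-1}\varphi_l}$ to be a $1$-dimensional submanifold whose connected components are exactly the images of the curves in $\Gamma^g_l(\varphi)$, and each component is tangent to $\R u(\cdot)$. Since $M$ is compact, each connected component of $M^{g^{-1}\varphi_l}$ is compact, hence is either a circle or a point; as $u$ is nonvanishing it cannot be a point, so the image of $\gamma$ is an embedded circle. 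A flow curve of a nonvanishing vector field whose image is a compact embedded circle must be periodic (the flow restricted to this invariant circle has no stationary points, so it is conjugate to a rotation, which is periodic). This gives (b).

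For part (c), suppose $g$ lies in a compact subgroup $K < G$. Here $M$ need not be compact, but properness and cocompactness of the $G$-action will substitute. The idea is to replace the ambient $M$ by a $K$-invariant "slice-like" compact region containing the image of $\gamma$, or more directly: the image $C_\gamma$ of $\gamma$ is a connected component of $M^{g^{-1}\varphi_l}$, and I claim $C_\gamma$ is contained in a compact set. Indeed, $g^{-1}\varphi_l$ commutes with the $K$-action (since $\varphi$ is $G$-equivariant and $g \in K$ centralises... more carefully, $\varphi_l$ is $G$-equivariant and $g^{-1} \in G$, so $g^{-1}\varphi_l$ need not commute with $K$ in general, but $K$ does act on the set of components of $M^{g^{-1}\varphi_l}$ permuting them up to the $K$-conjugation twist). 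Rather than pursue that, the cleanest route: by cocompactness $M/G$ is compact, so pick a compact set $D \subset M$ with $GD = M$; then the image of $\gamma$, being a connected orbit of the flow satisfying $\varphi_l(m) = gm$ with $g$ in the compact group $K$, returns to $D$ under bounded group translations, and one shows $\gamma(\R)$ meets only finitely many translates $kD$ with $k$ ranging over a relatively compact set — forcing $\gamma(\R)$ to be relatively compact, hence (being a closed submanifold component) compact. Then argue as in (b) that $\gamma$ is a periodic circle. The main obstacle is making this "relative compactness of $\gamma(\R)$" rigorous; I expect one uses that $\{m \in M : d(m, Km) \text{ small}\}$ behaves well under properness, or passes to the quotient $M/K$-like space — this is the step requiring the most care.

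For part (a), assume $\varphi$ is $g$-nondegenerate; I want to show $T_\gamma = \int_{I_\gamma}\chi(\gamma(s))\,ds$ converges. By Lemma \ref{lem Gammagl discrete} the image $C_\gamma = \gamma(I_\gamma)$ (with $\gamma|_{I_\gamma}$ a bijection onto the image mod measure zero) is a connected component of the closed set $M^{g^{-1}\varphi_l}$. The key point is that $\chi \in C^\infty_c(M)$, so $\supp\chi$ is compact; hence the integrand $\chi(\gamma(s))$ is supported in the closed set $I_\gamma \cap \gamma^{-1}(\supp\chi)$. Because $C_\gamma$ is a properly embedded $1$-manifold (closed in $M$), $C_\gamma \cap \supp\chi$ is compact, and $\gamma^{-1}(\supp\chi) \cap I_\gamma$ is a closed subset of $I_\gamma$ on which $\gamma$ is proper onto a compact set; combined with the fact that $\gamma$ has nonvanishing speed $u(\gamma(s)) \neq 0$ bounded below on the compact set $C_\gamma \cap \supp\chi$, the preimage has finite Lebesgue measure. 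Since $\|\chi\|_\infty < \infty$, the integral converges absolutely. (When $M$ is compact or $g$ is in a compact subgroup, parts (b),(c) make $I_\gamma$ itself bounded, giving (a) immediately; the point of the general statement is that $g$-nondegeneracy alone suffices via the compact support of $\chi$.)
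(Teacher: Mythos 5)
Your parts (a) and (b) are essentially the paper's argument. For (a) you use exactly the intended mechanism: the image of $\gamma$ is closed by Lemma \ref{lem Gammagl discrete}, so $\im(\gamma)\cap\supp(\chi)$ is compact, and properness of $\gamma|_{I_\gamma}$ gives compactness of $\gamma^{-1}(\supp\chi)\cap I_\gamma$; the extra appeal to a lower bound on the speed is harmless but unnecessary, since compactness of the preimage already gives finite measure. For (b) the paper simply notes that a closed subset of a compact $M$ is compact and hence $\gamma$ is periodic; your elaboration via the circle components of $M^{g^{-1}\varphi_l}$ is correct.

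Part (c) is where you have a genuine gap, and you flag it yourself. Your proposed route --- covering $M$ by $G$-translates of a compact set $D$ and arguing that $\gamma(\R)$ meets only ``finitely many translates $kD$ with $k$ in a relatively compact set'' --- is not established and is not obviously true: cocompactness of the action controls $M/G$, not individual flow lines, and a curve can a priori run off through translates $x_nD$ with $x_n\to\infty$ in $G$. The fact that rescues the situation is the one you gesture at but never use quantitatively: from $\varphi_l(\gamma(0))=g\gamma(0)$ and equivariance one gets $\gamma(t+nl)=g^n\gamma(t)$ for all $n\in\Z$ and $t\in[0,l]$, hence
\[
\im(\gamma)\subset \overline{g^{\Z}}\cdot\gamma([0,l]).
\]
If $g$ lies in a compact subgroup, $\overline{g^{\Z}}$ is compact, so the right-hand side is compact by continuity of the action; since $\im(\gamma)$ is closed (Lemma \ref{lem Gammagl discrete}), it is compact, and periodicity follows as in (b). No slice theorem, fundamental domain, or properness argument is needed. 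You should replace your sketch for (c) with this computation.
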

\begin{proof}
By Lemma \ref{lem Gammagl discrete}, the image of $\gamma$ is closed. So $\im(\gamma) \cap \supp(\chi)$ is compact. Taking $I_{\gamma}$ to be closed, as we may, and using properness of  $\gamma|_{I_{\gamma}}$, we find that $\gamma^{-1}(\supp(\chi)) \cap I_{\gamma}$ is compact. So the integrand in \eqref{eq def Tgamma} has compact support, and (a) follows.

If $M$ is compact, then the image of $\gamma$ is compact, because it is closed. Hence $\gamma$ is periodic, and (b) follows.

For all $n \in \Z$ and $t \in [0,l]$, we have
\[
\gamma(t+nl) = g^n \gamma(t) \subset \overline{g^{\Z}} \gamma([0,l]).
\]
If $g$ lies in a compact subgroup of $G$, then the set on the right is compact. Because the image of $\gamma$ is closed and contained in this set, it is compact. Hence (c) follows.
\end{proof}

It follows from the definitions that for all $h \in G$, 
\beq{eq Lhghinv}
L_{hgh^{-1}}(\varphi) = L_g(\varphi)
\eeq 
 and that for $l$ in this set, 
 \beq{eq Gamma hghinv}
 \Gamma_l^{hgh^{-1}}(\varphi) = h\cdot \Gamma_l^g(\varphi). 
 \eeq
 So by Lemma \ref{lem Gammagl discrete}, the set $\Gamma_l^{hgh^{-1}}(\varphi) $ is discrete for all $l \in L_g(\varphi)$ if $\varphi$ is $g$-nondegenerate.

Let $A \in \End(E)$ be $G$-equivariant, and suppose that  it commutes with the flow $\Phi$.  
For $x \in G$, let $L_{x}(\varphi)$ and $\Gamma^{x}_l(\varphi)$, for $l \in L_{x}(\varphi)$, be as in Definition \ref{def length spectrum}.

\begin{lemma}
Let $x \in G$, 
 $l \in L_{x}(\varphi)$ and $\gamma \in \Gamma^{x}_l(\varphi)$. Then the number
\beq{eq trace g Phi}
\tr(A x(\Phi_{-l})_{\gamma(t)})
\eeq
is independent of $t$.
\end{lemma}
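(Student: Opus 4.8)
The plan is to show that the fibre endomorphisms $Ax(\Phi_{-l})_{\gamma(t)}$ of the various fibres $E_{\gamma(t)}$ are all conjugate to one another by the flow isomorphisms, and then to invoke conjugation-invariance of the trace. First I would unwind the notation. Equivariance of $\varphi$ gives $\gamma(t+l)=\varphi_t(\gamma(l))=\varphi_t(x\gamma(0))=x\gamma(t)$ for all $t$, hence $\gamma(t-l)=x^{-1}\gamma(t)$. Therefore $(\Phi_{-l})_{\gamma(t)}\colon E_{\gamma(t)}\to E_{\gamma(t-l)}=E_{x^{-1}\gamma(t)}$, then $x\colon E_{x^{-1}\gamma(t)}\to E_{\gamma(t)}$, and composing with $A_{\gamma(t)}$ we see that $Ax(\Phi_{-l})_{\gamma(t)}$ is genuinely an endomorphism of $E_{\gamma(t)}$, so its trace is defined.

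The central step is the identity
\[
x(\Phi_{-l})_{\gamma(t)} = \Phi_t \circ x(\Phi_{-l})_{\gamma(0)} \circ \Phi_{-t}
\]
of endomorphisms of $E_{\gamma(t)}$, where $\Phi_{-t}=(\Phi_{-t})_{\gamma(t)}\colon E_{\gamma(t)}\to E_{\gamma(0)}$ and $\Phi_t=(\Phi_t)_{\gamma(0)}\colon E_{\gamma(0)}\to E_{\gamma(t)}$. To prove it I would use $G$-equivariance of the lifted flow, $x\circ\Phi_t=\Phi_t\circ x$, to move $x$ to the left of the right-hand side, reducing it to $x\circ\bigl(\Phi_t\circ(\Phi_{-l})_{\gamma(0)}\circ\Phi_{-t}\bigr)$; then the one-parameter group law $\Phi_t\circ\Phi_{-l}\circ\Phi_{-t}=\Phi_{-l}$, applied fibre-wise along the base points $\gamma(t)\to\gamma(0)\to\gamma(-l)\to\gamma(t-l)$, collapses the bracket to $(\Phi_{-l})_{\gamma(t)}$.

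Given this identity, the conclusion is immediate. Substituting it into $\tr\bigl(Ax(\Phi_{-l})_{\gamma(t)}\bigr)$ and using that $A$ commutes with the flow, i.e.\ $A_{\gamma(t)}\circ(\Phi_t)_{\gamma(0)}=(\Phi_t)_{\gamma(0)}\circ A_{\gamma(0)}$, to pull $A$ through $\Phi_t$, we obtain
\[
\tr\bigl(Ax(\Phi_{-l})_{\gamma(t)}\bigr) = \tr\Bigl(\Phi_t\circ\bigl(Ax(\Phi_{-l})_{\gamma(0)}\bigr)\circ\Phi_{-t}\Bigr).
\]
Since $\Phi_{-t}=\Phi_t^{-1}$, cyclicity of the fibre-wise trace gives that this equals $\tr\bigl(Ax(\Phi_{-l})_{\gamma(0)}\bigr)$, which does not involve $t$.

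I do not expect a genuine obstacle here; the only point requiring care is the bookkeeping of which fibre each of the base-point-shifting maps $\Phi_t$, $\Phi_{-l}$ and $x$ acts on and lands in. Once those base points are tracked explicitly, every step is a formal application of the flow group law, of equivariance of $\Phi$, of the commutation $A\Phi_t=\Phi_tA$, or of cyclicity of the trace.
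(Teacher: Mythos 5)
Your proof is correct and is essentially the paper's argument: the paper records the same conjugation identity $Ax(\Phi_{-l})_{\gamma(t)}=(\Phi_t)_{\gamma(0)}\circ Ax(\Phi_{-l})_{\gamma(0)}\circ(\Phi_t)_{\gamma(0)}^{-1}$ as a commuting diagram, obtained from equivariance of $\Phi$, the flow group law, and the commutation of $A$ with $x$ and $\Phi_t$, and then concludes by conjugation-invariance of the trace. Your version just spells out the fibre bookkeeping more explicitly; there is no substantive difference.
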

\begin{proof}
For all $t$, we have a commuting diagram
\[
\xymatrix{
E_{\gamma(0)} \ar[d]_-{(\Phi_t)_{\gamma(0)}} \ar[rrr]^-{A x (\Phi_{-l})_{\gamma(0)}} &&& E_{\gamma(0)}\ar[d]^-{(\Phi_t)_{\gamma(0)}} \\
E_{\gamma(t)} \ar[rrr]^-{Ax (\Phi_{-l})_{\gamma(t)}} &&& E_{\gamma(t)}.
}
\]
Here we use the assumption that $A$ commutes with $x$ and $\Phi_t$. 
We find that $\tr(Ax(\Phi_{-l})_{\gamma(t)}) = \tr(Ax(\Phi_{-l})_{\gamma(0)})$.
\end{proof}
We denote the number \eqref{eq trace g Phi} by $\tr(Ax\Phi_{-l}|_\gamma)$.

Recall that we have chosen $g \in G$ such that $G/Z$ has a $G$-invariant measure $d(hZ)$. We denote the Dirac $\delta$ distribution  at $l \in \R$ by $\delta_l$. 
\begin{theorem}[Equivariant Guillemin trace formula]\label{prop fixed pt gen}
Suppose that the flow $\varphi$ is $g$-nondegenerate. Then the operator $A \Phi^*_{\psi}$ is flat $g$-trace class for all $\psi \in C^{\infty}_c(\Rnz)$. Furthermore, \eqref{eq flat g trace flow} defines a distribution on $\Rnz$, equal to
\beq{eq fixed pt Trb 2}
\Tr^{\flat}_g(A \Phi^*) = \int_{G/Z}  \sum_{l \in L_{g}(\varphi)} \sum_{\gamma \in \Gamma^{hgh^{-1}}_l(\varphi)} 
\frac{\tr(A hgh^{-1}\Phi_{-l}|_\gamma) }{|\det(1-P_{\gamma}^{hgh^{-1}})  |} T_{\gamma} \delta_{l}  \, d(hZ).
\eeq
\end{theorem}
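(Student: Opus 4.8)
The plan is to deduce Theorem \ref{prop fixed pt gen} from Corollary \ref{cor g anosov trace cl}. By that corollary, together with Proposition \ref{prop restr K}, it suffices to (i) verify the absolute convergence hypothesis of Corollary \ref{cor g anosov trace cl} (which then also yields that $A\Phi^*_\psi$ is flat $g$-trace class), and (ii) for each fixed $h \in G$, compute the distribution $\psi \otimes f \mapsto \langle \tr(hgh^{-1}\cdot AK|_{\Rnz \times \Delta(M)}),\, \psi\otimes (p_2^*f)|_{\Delta(M)}\rangle$ on $\Rnz \times M$ explicitly. Since restriction to $\Rnz \times \Delta(M)$ is a local operation --- well-defined by Proposition \ref{prop restr K} and Theorem 8.2.4 in \cite{Hormander1} --- step (ii) is a computation in coordinates near each relevant flow curve; it is in essence the computation behind Guillemin's original formula (Theorem 8 in \cite{Guillemin77}, see also Theorem 19.4.1 in \cite{Hormander3}), now carrying the fibre-wise endomorphism $A\,hgh^{-1}\Phi_{-l}$ transported along the curve.

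\emph{The local computation.} Fix $h \in G$ and write $x := hgh^{-1}$; note $\varphi$ is $x$-nondegenerate, as this condition transforms into $g$-nondegeneracy under conjugation by $h$ using equivariance. From \eqref{eq def Phi psi}, \eqref{eq def x K} and $G$-equivariance, the Schwartz kernel of $x \cdot A\Phi^*_\psi$ is the generalised section of $\Rnz \times E^*\boxtimes E$ which is, schematically, $\delta\bigl(m' - \varphi_t(x^{-1}m)\bigr)$ (with respect to $dm$) tensored with the endomorphism $x \cdot A_{x^{-1}m}\circ (\Phi_{-t})_{\varphi_t(x^{-1}m)}$. Restricting to $m' = m$ localises this distribution to $\{(t,m) : \varphi_t(m) = xm\}$, which by \eqref{eq Lhghinv} is nonempty only for $t = l \in L_x(\varphi) = L_g(\varphi)$; by Lemma \ref{lem Gammagl discrete}, for such $l$ this set is the closed subset $M^{x^{-1}\varphi_l}$, which is the disjoint union of the one-dimensional images $\im(\gamma)$, $\gamma \in \Gamma^x_l(\varphi)$. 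Working in flow-box coordinates near a given such $\gamma$, and splitting $T_mM = \R u(m) \oplus V_m$ with $V_m$ a complement, one checks that the map $(t,m) \mapsto \varphi_t(x^{-1}m) - m$ vanishes exactly on $\{(l,\gamma(s)) : s \in I_\gamma\}$ and that its derivative transverse to this set is block upper-triangular with diagonal blocks the identity on the $u$-direction (coming from $\partial_t \varphi_t = u$) and $P^x_\gamma - 1$ on $V_m$ (the linearised $x$-delocalised Poincar\'e map \eqref{eq def Poincare}). Thus $x$-nondegeneracy makes the zero set clean, and the standard pushforward formula for $\delta$ of a submersion onto its zero set gives, as distributions on $\Rnz \times M$,
\[
\tr\bigl(x \cdot AK|_{\Rnz \times \Delta(M)}\bigr) = \sum_{l \in L_g(\varphi)}\ \sum_{\gamma \in \Gamma^x_l(\varphi)} \frac{\tr(Ax\Phi_{-l}|_\gamma)}{|\det(1 - P^x_\gamma)|}\ \delta_l \otimes \mu_\gamma,
\]
where $\mu_\gamma$ is the pushforward under $\gamma|_{I_\gamma}$ of Lebesgue measure on $I_\gamma$, and the fibre-wise trace along $\im(\gamma)$ equals $\tr(Ax\Phi_{-l}|_\gamma)$ by the lemma preceding the theorem.

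\emph{Pairing and assembly.} Pairing with $\psi \otimes (p_2^*\chi)|_{\Delta(M)}$, the factor $\delta_l$ contributes $\psi(l)$ and $\langle \mu_\gamma, \chi\rangle = \int_{I_\gamma}\chi(\gamma(s))\,ds = T_\gamma$ by Definition \ref{def length spectrum}(d), convergent by Lemma \ref{lem conv Tgamma}(a). Hence the inner pairing equals $\sum_{l \in L_g(\varphi)} \sum_{\gamma \in \Gamma^x_l(\varphi)} \frac{\tr(Ax\Phi_{-l}|_\gamma)}{|\det(1-P^x_\gamma)|}\,T_\gamma\,\psi(l)$. Integrating over $hZ \in G/Z$ and substituting $x = hgh^{-1}$, using \eqref{eq Lhghinv} and \eqref{eq Gamma hghinv}, produces exactly the right-hand side of \eqref{eq fixed pt Trb 2}. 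Once absolute convergence is known, this formula exhibits $\langle \Tr^\flat_g(A\Phi^*),\psi\rangle$, for $\psi$ supported in a fixed compact $K \subset \Rnz$, as an absolutely convergent combination of the evaluation functionals $\psi \mapsto \psi(l)$ with $l \in L_g(\varphi) \cap K$, hence as a distribution on $\Rnz$.

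\emph{Main obstacle.} The technical heart of the argument is verifying the absolute convergence hypothesis of Corollary \ref{cor g anosov trace cl}: that $\int_{G/Z} \bigl|\langle \tr(hgh^{-1}\cdot AK|_{\Rnz \times \Delta(M)}),\, \psi \otimes (p_2^*f)|_{\Delta(M)}\rangle\bigr|\, d(hZ) < \infty$ for all $\psi \in C^\infty_c(\Rnz)$ and $f \in C^\infty_c(M)$, and likewise that only an absolutely summable family of $l \in L_g(\varphi)$ contributes for a given $\psi$. This is where properness and cocompactness of the $G$-action enter essentially: by the local form of the integrand above, the contribution of a coset $hZ$ vanishes unless $h \cdot \im(\gamma)$ meets $\supp(f)$ for some $\gamma \in \Gamma^g_l(\varphi)$ with $l \in \supp(\psi) \cap L_g(\varphi)$, and $g$-nondegeneracy (via Lemma \ref{lem Gammagl discrete}) makes the relevant periodic sets discrete, locally finite over the compact $M/G$. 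I would establish the required bounds by covering $M/G$ by finitely many $G$-invariant tubular neighbourhoods and reducing, on each, to a uniform estimate of classical, non-equivariant type; I expect this bookkeeping to be the most delicate part of the proof.
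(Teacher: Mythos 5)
Your proposal follows essentially the same route as the paper: a local flow-box computation of the restricted kernel near each curve with $\varphi_l(m)=xm$, where $x$-nondegeneracy yields the Jacobian factor $|\det(1-P^x_\gamma)|^{-1}$ via the pushforward of a $\delta$-distribution (the paper's Lemmas \ref{lem fixed pt loc} and \ref{lem fixed pt glob}), pairing with the cutoff to produce $T_\gamma$, reduction of the bundle case to the scalar case by factoring out the fibre-wise trace $\tr(Ax\Phi_{-l}|_\gamma)$ (the paper's Lemma \ref{lem red scalar}), and integration over $G/Z$ using \eqref{eq Lhghinv} and \eqref{eq Gamma hghinv}. The convergence and local-finiteness bookkeeping you flag as the main obstacle is handled only implicitly in the paper as well (via Lemma \ref{lem Gammagl discrete}, properness, and the compact supports of $\psi$ and $\chi$), so your sketch matches the published argument in both substance and level of detail.
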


\begin{remark} \label{rem indep gamma}
The number $\det(1-P_{\gamma}^{hgh^{-1}})$  in \eqref{eq fixed pt Trb 2} is independent of the choice of the representative $\gamma$ of a class in $\Gamma_l^{hgh^{-1}}(\varphi)$. Indeed, suppose that $ x\in G$,  $l \in L_x(\varphi)$ and  that $\gamma_1, \gamma_2 \in \tilde \Gamma_l^x(\varphi)$ are in the same $\R$-orbit for the action \eqref{eq action R Gamma}. Then
\[
P_{\gamma_2}^x = \varphi_s \circ P_{\gamma_1}^x\circ \varphi_{-s}.
\]
\end{remark}

Theorem \ref{prop fixed pt gen} will be used to express the equivariant Ruelle $\zeta$-function in terms of the flat $g$-trace in \cite{HS25b}.

\section{Proofs of properties of the flat $g$-trace}

This section contains proofs of Propositions \ref{prop flat g trace indep chi}, \ref{prop restr K} and \ref{prop KX}. We also prove Lemma \ref{lem flat trace trivial}, which is used in the proof of Proposition \ref{prop fixed pt Trb}.

\subsection{Independence of the cutoff function}\label{sec indep chi}

 If $x \in G$, then we write
\beq{eq def Delta x}
\Delta_x(M) := \{(xm,m); m \in M\} \subset M \times M.
\eeq
\begin{lemma}\label{lem pullback cts Delta}
For all $x \in G$, the map $\kappa \mapsto x \cdot \kappa$ is a continuous linear map from 
\beq{eq gen sect x}
\{\kappa \in \Gamma^{-\infty}(E \boxtimes E^*); \WF(\kappa) \cap \cN^*(\Delta_{x^{-1}}(M)) = \emptyset\}
\eeq
to
\beq{eq gen sect e}
\{\kappa \in \Gamma^{-\infty}(E \boxtimes E^*); \WF(\kappa) \cap \cN^*(\Delta(M)) = \emptyset\},
\eeq
with respect to the topologies on these spaces as in Definition 8.2.2 in \cite{Hormander1}.
\end{lemma}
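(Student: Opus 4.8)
The plan is to realise the map $\kappa \mapsto x\cdot\kappa$ as the pullback of generalised sections along a diffeomorphism, and then invoke Hörmander's continuity statement for such pullbacks (Theorem 8.2.4 in \cite{Hormander1}), together with its description of how wave front sets transform. Concretely, the operator $x\circ T$ has Schwartz kernel $x\cdot\kappa$ obtained from $\kappa$ by acting with $x$ in the \emph{first} variable of $M\times M$, combined with the vector-bundle action of $x$ on the fibres of $E$. So first I would factor the map: write $F_x\colon M\times M \to M\times M$, $F_x(m,m') = (x^{-1}m, m')$, which is a diffeomorphism, and note that $x\cdot\kappa = (\text{fibre action of }x)\circ F_x^*\kappa$ up to the identification of $\Gamma^{-\infty}(E\boxtimes E^*)$ with a dual space fixed via the chosen density (here $G$-invariance of $dm$ is what makes the density factors behave, cf.\ Remark \ref{rem flat g trace indep metric}). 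The fibre-wise action of $x$ on $E\boxtimes E^*$ is a smooth bundle automorphism covering $F_x$, so composing with it is continuous on generalised sections and does not change wave front sets; hence it suffices to treat $F_x^*$.

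Next I would apply the pullback theorem. Since $F_x$ is a diffeomorphism, $F_x^*$ is defined on \emph{all} of $\Gamma^{-\infty}(E\boxtimes E^*)$, and Theorem 8.2.4 in \cite{Hormander1} gives
\[
\WF(F_x^*\kappa) \subseteq F_x^*\WF(\kappa) = \{(m,m',\xi,\eta) : (x^{-1}m, m', \,{}^t(T F_x)^{-1}(\xi,\eta)) \in \WF(\kappa)\},
\]
i.e.\ $\WF(F_x^*\kappa)$ is the image of $\WF(\kappa)$ under the induced symplectomorphism of cotangent bundles. The key bookkeeping step is then to check that this symplectomorphism carries $\cN^*(\Delta_{x^{-1}}(M))$ exactly onto $\cN^*(\Delta(M))$: since $F_x(\Delta(M)) = \Delta_{x^{-1}}(M)$ as submanifolds (because $(x^{-1}m,m)$ ranges over $\Delta_{x^{-1}}(M)$ as $m$ ranges over $M$), the conormal bundle of $\Delta(M)$ pulls back under $F_x$ to the conormal bundle of $\Delta_{x^{-1}}(M)$ — conormal bundles are natural under diffeomorphisms. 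Therefore $\WF(\kappa)\cap\cN^*(\Delta_{x^{-1}}(M)) = \emptyset$ implies $\WF(F_x^*\kappa)\cap\cN^*(\Delta(M)) = \emptyset$, so $F_x^*$ (and hence $\kappa\mapsto x\cdot\kappa$) maps the space \eqref{eq gen sect x} into \eqref{eq gen sect e}.

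Finally, for continuity with respect to the Hörmander topologies of Definition 8.2.2 in \cite{Hormander1}: these are the topologies on spaces $\Gamma^{-\infty}_{\Gamma'}(E\boxtimes E^*)$ of generalised sections with wave front set in a fixed closed cone $\Gamma'$, given by the distributional (weak-$*$) topology together with the seminorms controlling decay of the Fourier transform in closed cones disjoint from $\Gamma'$. Theorem 8.2.4 in \cite{Hormander1} asserts precisely that pullback along a diffeomorphism is sequentially continuous (indeed continuous) for these topologies, with the cone $\Gamma'$ mapped to $F_x^*\Gamma'$; postcomposition with the smooth bundle automorphism is manifestly continuous. Combining these gives the claim. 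The only real point requiring care is the identification of the dual-space realisations of generalised sections via the $G$-invariant density: one must verify that the density factors introduced on the source and target cancel, which is exactly the content of Remark \ref{rem flat g trace indep metric} and follows from $G$-invariance of $dm$; I expect this to be the main (though minor) technical obstacle, the rest being a direct citation of Hörmander's pullback theorem plus naturality of conormal bundles.
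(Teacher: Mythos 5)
Your proposal is correct and follows essentially the same route as the paper: both realise $x\cdot\kappa$ as the pullback along the diffeomorphism $(m,m')\mapsto(x^{-1}m,m')$, verify that this pullback carries $\cN^*(\Delta_{x^{-1}}(M))$ onto $\cN^*(\Delta(M))$, and then cite Theorem 8.2.4 in H\"ormander for continuity on the spaces of Definition 8.2.2. Your additional remarks on the fibre-wise bundle action and the density identification are fine but not points the paper dwells on.
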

\begin{proof}
The set
\[
(x^{-1} \times \Id_M)^* \cN^*(\Delta_{x^{-1}} (M)) := 
\{  T(x^{-1} \times \Id_M)^*\xi; \xi \in   \cN^*(\Delta_{x^{-1}} (M))\}
\]
equals $ \cN^*(\Delta (M))$, by a direct verification. Furthermore, if $\kappa \in \Gamma^{-\infty}(E \boxtimes E^*)$, then $x\cdot \kappa = (x^{-1} \times \Id_M)^*\kappa$. 
 Hence the claim follows from standard results on pulling back distributions; see Theorem 8.2.4 in \cite{Hormander1}.
\end{proof}

The proof of Proposition \ref{prop flat g trace indep chi}  is based on Lemmas \ref{lem Trg shift} and \ref{lem psi u f}.
\begin{lemma}\label{lem Trg shift}
For all $\kappa$ as in Definition \ref{def flat g trace},  $f \in C^{\infty}_c(M)$ and all $x,h \in G$,
\beq{eq Trg shift}
\langle \tr(hgh^{-1} \cdot \kappa|_{\Delta(M)}),  (p_2^*(x\cdot f))|_{\Delta(M)}\rangle = 
\langle \tr(x^{-1}hgh^{-1}x \cdot \kappa|_{\Delta(M)}),  (p_2^* f)|_{\Delta(M)}\rangle.
\eeq
\end{lemma}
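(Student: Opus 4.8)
The plan is to reduce the identity to two elementary facts: conjugating the operator $hgh^{-1}\circ T$ by $x$ corresponds, at the level of Schwartz kernels, to pulling back along the diffeomorphism $x\times x\colon M\times M\to M\times M$; and this pullback is compatible with restriction to the diagonal, with the fibre-wise trace of Definition \ref{def fibre trace}, and with the chosen $G$-invariant density $dm$.

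First I would set $y:=hgh^{-1}$ and use $G$-equivariance of $T$ to write $(x^{-1}yx)\circ T = x^{-1}\circ (y\circ T)\circ x$, since $x\circ T = T\circ x$. A direct computation with Schwartz kernels — substituting in the defining integral and using $G$-invariance of $dm$ — shows that for any continuous operator $S$, the Schwartz kernel of $x^{-1}\circ S\circ x$ equals $(x\times x)^*$ of the Schwartz kernel of $S$, where the pullback uses the canonical equivariant identification of $(x\times x)^*(E\boxtimes E^*)$ with $E\boxtimes E^*$. Applying this with $S=y\circ T$ yields
\[
(x^{-1}yx)\cdot\kappa = (x\times x)^*(y\cdot\kappa).
\]

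Next I would note that $x\times x$ maps $\Delta(M)$ to itself, restricting there to $\Delta(x)\colon(m,m)\mapsto(xm,xm)$, and preserves the conormal cone $\cN^*(\Delta(M))$. By the hypotheses of Definition \ref{def flat g trace}, applied to the elements $h$ and $x^{-1}h$ of $G$, the wave front sets of both $y\cdot\kappa$ and $(x^{-1}yx)\cdot\kappa$ are disjoint from $\cN^*(\Delta(M))$, so Theorem 8.2.4 in \cite{Hormander1} lets one commute pullback with restriction to the diagonal:
\[
\bigl((x^{-1}yx)\cdot\kappa\bigr)\big|_{\Delta(M)} = \Delta(x)^*\bigl((y\cdot\kappa)|_{\Delta(M)}\bigr).
\]
Then I would pair against $(p_2^*f)|_{\Delta(M)}$, unwind Definition \ref{def fibre trace}, and move $\Delta(x)^*$ to the other side of the pairing. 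Because $dm$ is $G$-invariant the transpose of $\Delta(x)^*$ is simply $\Delta(x)_*$ with no Jacobian, and because conjugation by $\rho_E(x)$ fixes the identity endomorphism one has $\Delta(x)_*\bigl((p_2^*f)|_{\Delta(M)}\Id_E\bigr) = (p_2^*(x\cdot f))|_{\Delta(M)}\Id_E$. Combining,
\[
\bigl\langle\tr\bigl((x^{-1}yx)\cdot\kappa|_{\Delta(M)}\bigr),(p_2^*f)|_{\Delta(M)}\bigr\rangle
= \bigl\langle\tr\bigl((y\cdot\kappa)|_{\Delta(M)}\bigr),(p_2^*(x\cdot f))|_{\Delta(M)}\bigr\rangle,
\]
which is the assertion with $y=hgh^{-1}$.

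The only nontrivial point is the bookkeeping in the last two steps: one must check that restriction to the diagonal, the fibre-wise trace, and the change of variables all interact correctly with $(x\times x)^*$ at the level of distributional sections, which is exactly where the wave front set condition, the $G$-invariance of $dm$, and the conjugation-invariance of $\Id_E$ enter. Once the kernel identity $(x^{-1}yx)\cdot\kappa = (x\times x)^*(y\cdot\kappa)$ is established, everything else is formal; alternatively, one could verify the identity first for smooth $\kappa$ by a plain change of variables and then extend by the continuity statement of Lemma \ref{lem pullback cts Delta}.
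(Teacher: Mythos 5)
Your argument is correct, and your primary route differs from the paper's. The paper proves \eqref{eq Trg shift} first for smooth $\kappa$ by the bare change of variables $m\mapsto xm$ together with equivariance of $T$ and the trace property of the fibre-wise trace, and then extends to distributional $\kappa$ by density of smooth kernels in the space \eqref{eq gen sect x} (Theorem 8.2.3 in \cite{Hormander1}) combined with the continuity supplied by Lemma \ref{lem pullback cts Delta} — i.e.\ exactly the ``alternative'' you mention in your last sentence. Your main argument instead works directly at the distributional level: you isolate the kernel identity $(x^{-1}yx)\cdot\kappa=(x\times x)^*(y\cdot\kappa)$, use functoriality of pullback (via $(x\times x)\circ\iota_{\Delta}=\iota_{\Delta}\circ\Delta(x)$, legitimate here since $x\times x$ is a diffeomorphism preserving $\cN^*(\Delta(M))$ and the relevant wave front conditions hold for both $h$ and $x^{-1}h$), and then transpose $\Delta(x)^*$ against the invariant density, using that conjugation fixes $\Id_E$. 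This buys you a proof that avoids the density/approximation step entirely and makes transparent exactly which structures (invariance of $dm$, equivariance of $E$, invariance of $\Id_E$) are responsible for the identity; the paper's version is shorter to write down but hides the continuity burden in Lemma \ref{lem pullback cts Delta}. Both are complete; no gap.
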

\begin{proof}
If $\kappa$ is smooth, then by Example \ref{ex fibre trace smooth}, a substitution $m \mapsto xm$, equivariance of $T$ and the trace property of the fibre-wise trace, the left hand side of \eqref{eq Trg shift} equals
\begin{multline*}
\int_{M} f(x^{-1}m) \tr(hgh^{-1} \kappa(hg^{-1}h^{-1}m,m))\, dm \\
= 
\int_{M} f(m) \tr\bigl(x^{-1}hgh^{-1}x \kappa(x^{-1}hg^{-1}h^{-1}xm,m) \bigr)\, dm.
\end{multline*}
The expression on the right is the right hand side of \eqref{eq Trg shift}, so the equality holds for smooth kernels $\kappa$. 

The case for general $\kappa$ follows because the smooth kernels are dense in the space of generalised sections with wave front sets disjoint from $\cN^*(\Delta_y(M))$, for $y \in G$ (see Theorem 8.2.3 in \cite{Hormander1}), and both sides of \eqref{eq Trg shift} depend continuously on $\kappa$. For this continuity, note that the distribution $y\cdot \kappa$ in \eqref{eq gen sect e} depends continuously on $\kappa$ in \eqref{eq gen sect x}, by Lemma \ref{lem pullback cts Delta}. Restriction to $\Delta(M)$ is continuous on the space \eqref{eq gen sect e}, and the trace in Definition \ref{def fibre trace} is continuous from $\Gamma^{-\infty}(E^*\boxtimes E|_{\Delta(M)})$ to $\calD'(\Delta(M))$. So $\kappa \mapsto  \tr(y \cdot \kappa|_{\Delta(M)})$ is a continuous  map from \eqref{eq gen sect x} to $\calD'(\Delta(M))$. Evaluation of distributions on test functions is continuous by definition, so both sides of \eqref{eq Trg shift} indeed depend continuously on $\kappa$ in  \eqref{eq gen sect x}. And the condition \eqref{eq condition WF K} is equivalent to $\kappa$ lying in   \eqref{eq gen sect x}, for $x = hgh^{-1}$.
\end{proof}

\begin{lemma}\label{lem psi u f}
Let $\eta$ be a $G$-invariant distribution on $M$. Let $f \in C^{\infty}_c(M)$ be such that for all $m \in M$, 
\[
\int_G f(xm)\, dx = 0.
\]
Let $\psi \in C^{\infty}_c(G)$. Then
\beq{eq int psi u f}
\int_G \psi(x) \langle \eta, x\cdot f \rangle\, dx = 0.
\eeq
\end{lemma}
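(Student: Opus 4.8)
The plan is to reduce the statement to the case of smooth $\eta$ by a density argument, and there to prove it by a direct Fubini-type computation using $G$-invariance of $\eta$.

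First I would note that it suffices to prove the identity when $\eta$ is smooth, i.e.\ $\eta \in C^{\infty}(M)^G$. Indeed, both sides of \eqref{eq int psi u f} depend continuously on $\eta$ in the weak-$*$ topology on $\calD'(M)$ (the left side because $x \mapsto x\cdot f$ is continuous from $G$ to $C^{\infty}_c(M)$ and $\psi$ has compact support, so the integral is a finite superposition of evaluations against $\eta$); and the $G$-invariant smooth functions are weak-$*$ dense in the $G$-invariant distributions, by mollifying $\eta$ with an approximate identity on $G$ (convolution preserves $G$-invariance and lands in $C^{\infty}$). So I would first establish this approximation statement, then pass to the smooth case.

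For smooth $G$-invariant $\eta$, I would write $\langle \eta, x\cdot f\rangle = \int_M \eta(m) f(x^{-1}m)\, dm$, substitute $m \mapsto xm$ and use $\eta(xm) = \eta(m)$ to get $\int_M \eta(m) f(m)\cdots$—wait, more carefully: $\langle \eta, x\cdot f\rangle = \int_M \eta(m)(x\cdot f)(m)\,dm = \int_M \eta(m) f(x^{-1}m)\,dm$. Then
\[
\int_G \psi(x)\langle \eta, x\cdot f\rangle\, dx = \int_G \int_M \psi(x)\eta(m) f(x^{-1}m)\, dm\, dx.
\]
The integrand has compact support in $x$ (from $\psi$) and, for each $x$, in $m$ (from $f$), so Fubini applies. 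Swapping the order and substituting $x \mapsto x^{-1}$ (using unimodularity of $G$), then $x \mapsto xy$ for suitable $y$... actually the cleanest route is: for fixed $m$, substitute in the inner $x$-integral to reach $\int_G \psi(\cdot) f(x^{-1}m)\,dx$, and observe that after using $G$-invariance of $\eta$ the whole expression becomes $\int_M \eta(m)\bigl(\int_G \psi(x) f(x^{-1}m)\,dx\bigr)dm$; but the hypothesis is $\int_G f(xm)\,dx = 0$, not $\int_G \psi(x)f(x^{-1}m)\,dx = 0$, so this direct approach does not immediately close.

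The honest route, and the step I expect to be the main obstacle, is to handle the discrepancy between the weighting by $\psi$ and the hypothesis. I would proceed as follows: by Fubini and the substitution $x \mapsto xy^{-1}$ in the $x$-variable (for an auxiliary $y \in G$), together with $G$-invariance of $\eta$ in the form $\langle \eta, y\cdot(\,\cdot\,)\rangle = \langle\eta,\,\cdot\,\rangle$, show that the left-hand side of \eqref{eq int psi u f} is unchanged if $\psi$ is replaced by any left translate $L_y\psi$; averaging, it equals $\bigl(\int_G \psi\bigr)$ times the same expression with $\psi$ replaced by any fixed $L^1$-normalised approximate identity—no: instead, the right move is that the left-hand side of \eqref{eq int psi u f}, as a functional of $\psi \in C_c(G)$, is left-invariant, hence (being also continuous and given by an absolutely convergent integral) must be a constant multiple of the Haar integral $\int_G \psi(x)\,dx$. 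Computing that constant by testing against an approximate identity reduces it to $\langle \eta, \int_G (\,\cdot\,)f\,\rangle$-type expressions which, after interchanging the (now compactly supported, hence legitimate) integrals and applying $G$-invariance of $\eta$ once more, becomes $\langle \eta, m \mapsto \int_G f(xm)\,dx\rangle = \langle \eta, 0\rangle = 0$. Making the ``hence a multiple of Haar measure'' step precise—in particular checking that the functional $\psi \mapsto \int_G\psi(x)\langle\eta,x\cdot f\rangle\,dx$ is genuinely left-$G$-invariant and continuous on $C_c(G)$, so that uniqueness of Haar measure applies—is where the care is needed; everything else is routine Fubini and change of variables, using unimodularity of $G$ throughout.
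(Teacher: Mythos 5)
Your proposal does not close the gap that you yourself correctly identify in the middle of the argument. Since $\eta$ is $G$-invariant, $\langle \eta, x\cdot f\rangle = \langle \eta, f\rangle$ is \emph{constant} in $x$, so the observation that $\psi \mapsto \int_G \psi(x)\langle\eta,x\cdot f\rangle\,dx$ is left-invariant and hence a multiple of $\int_G\psi\,dx$ is trivially true and carries no information: the entire content of the lemma is that the constant $\langle\eta,f\rangle$ vanishes. Your final step, that testing against an approximate identity and ``interchanging the integrals'' yields $\langle \eta, m\mapsto \int_G f(xm)\,dx\rangle = 0$, is exactly the illegitimate move you flagged earlier: interchanging produces $\langle \eta, m\mapsto \int_G \psi(x) f(x^{-1}m)\,dx\rangle$, and no application of $G$-invariance of $\eta$ removes the weight $\psi$ from the inner integral (it just reproduces $\langle\eta,f\rangle$). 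The missing ingredient is a disintegration of $\int_M(\cdot)\,dm$ into an integral over $G$-orbits, where the hypothesis $\int_G f(xm)\,dx=0$ can be applied, times an integral over a transversal. The paper does this via Palais' slice theorem: locally $M=G\times_K N$ with $dm = dy\,dn$, so $\langle\eta,x\cdot f\rangle=\int_G\int_N\eta(n)f(x^{-1}yn)\,dn\,dy$, and left invariance of $dy$ gives $\int_G f(x^{-1}yn)\,dy=\int_G f(yn)\,dy=0$. (An alternative that stays in your framework: insert $1=\int_G\chi(xm)\,dx$ into $\langle\eta,f\rangle=\int_M\eta f\,dm$, interchange using properness and compact supports, substitute $m\mapsto x^{-1}m$, and use invariance of $\eta$ and $dm$ plus unimodularity to get $\int_M\eta(m)\chi(m)\bigl(\int_G f(xm)\,dx\bigr)dm=0$.) Without some such disintegration the hypothesis is never actually used.

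A secondary issue: your density claim is also shaky. Convolving $\eta$ with an approximate identity on $G$ produces $\int_G\varphi(x)\,x\cdot\eta\,dx$, which is smooth only \emph{along orbits}, not smooth on $M$ (for discrete $G$ it does nothing), so it does not show that $C^\infty(M)^G$ is dense in the $G$-invariant distributions. The paper's reduction to smooth $\eta$ is likewise terse on this point, but it adds a remark that the slice argument applies directly to $G$-invariant distributions, using precisely that they restrict to the slice $N$ because they are smooth along orbits; that is the correct way to handle the general case.
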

\begin{proof}
We prove the claim for $\eta \in C^{\infty}(M)^G$, the general case follows because the left hand side of \eqref{eq int psi u f} equals 
\[
\left\langle \eta, \int_G \psi(x)  x\cdot f \, dx  \right\rangle
\]
and hence 
depends continuously on $\eta$.

Suppose  that $M = G \times_K N$ for a compact subgroup $K<G$ and a compact, $K$-invariant submanifold $N\subset M$. The general case follows from an application of Palais' slice theorem \cite{Palais61} and the use of a partition of unity. Since $M/G$ is compact, it can be covered by finitely many open sets of the form $G \times_K N$. A sum over these sets commutes with the integrals that occur in the argument below.

Choose the density $dm$ so that for all $a \in C_c(M)$, 
\[
\int_M a(m)\, dm = \int_G \int_N a(gn)\, dn\, dg
\]
for a $K$-invariant density $dn$ on $N$. 
Then 
 the left hand side of \eqref{eq int psi u f} equals
\beq{eq int psi u f 2}
\int_G \psi(x) \int_G \int_N \eta(n) f(x^{-1}yn)\, dn\, dy\, dx.
\eeq
Since $\psi$ and $f$ have compact supports, the action is proper, and $N$ is compact, the integrand has compact support. So we may freely interchange the order of integration. 
Interchanging the inner integrals over $G$ and $N$, and using left invariance of $dy$,  
we find that \eqref{eq int psi u f 2} equals
\[
\int_G \psi(x) \int_N \eta(n)  \int_G f(yn)\, dy \, dn\, dx = 0.
\]
\end{proof}

\begin{remark}
The proof of Lemma \ref{lem psi u f} can also be applied directly to $G$-invariant distributions $\eta$, rather than smooth functions. Then one uses the fact that such a distribution has a well-defined restriction to $N$, for example in \eqref{eq int psi u f 2}, because it is smooth on $G$-orbits. To see the latter, note that for any $\varphi \in C^{\infty}_c(G)$ with integral $1$, 
\[
\eta = \int_G \varphi(x) \, x\cdot \eta\, dx.
\]
And the right hand side is smooth on $G$-orbits.
\end{remark}


\begin{proof}[Proof of Proposition \ref{prop flat g trace indep chi}]
Let $f \in C^{\infty}_c(M)$, and suppose that for all $m \in M$, 
\[
\int_G f(xm)\, dx = 0.
\]
We will show that the integral \eqref{eq def flat g trace 1}  equals zero; this implies the claim.

For $x \in G$, let $x\kappa x^{-1}$ be the Schwartz kernel of $x\circ T \circ x^{-1}$. By $G$-equivariance of $T$, we have $x\kappa x^{-1} = \kappa$. Let $\psi \in C^{\infty}_c(G)$ be a function with integral equal to one. Then \eqref{eq def flat g trace 1}  equals
\begin{multline}\label{eq trace indep chi}
\int_{G/Z} \int_G \psi(x) \langle \tr(hgh^{-1} \cdot x\kappa x^{-1}|_{\Delta(M)}),  (p_2^*f)|_{\Delta(M)}\rangle \, dx \, d(hZ)\\
 = 
\int_{G/Z} \int_G \psi(x) \langle \tr(x^{-1}hgh^{-1}x\kappa|_{\Delta(M)}),  (p_2^*f)|_{\Delta(M)}\rangle \, dx \, d(hZ) \\
 = 
\int_{G/Z} \int_G \psi(x) \langle \tr(hgh^{-1}\kappa|_{\Delta(M)}),  (p_2^*(x \cdot f))|_{\Delta(M)}\rangle \, dx \, d(hZ).
\end{multline}
In the second equality, we used Lemma \ref{lem Trg shift}. Since the integral \eqref{eq def flat g trace 1} converges absolutely and $\psi$ has compact  support, the above integrals  converge absolutely. Hence the right hand side of \eqref{eq trace indep chi}  equals
\beq{eq indep chi 2}
\int_G \psi(x) \int_{G/Z}  \langle \tr(hgh^{-1}\kappa|_{\Delta(M)}),  (p_2^*(x \cdot f))|_{\Delta(M)}\rangle  \, d(hZ)\, dx.
\eeq
The distribution
\[
\varphi \mapsto \int_{G/Z}  \langle \tr(hgh^{-1}\kappa|_{\Delta(M)}),  (p_2^*\varphi)|_{\Delta(M)}\rangle  \, d(hZ)
\]
on $M$ is $G$-invariant by Lemma \ref{lem Trg shift} and $G$-invariance of $d(hZ)$. Lemma \ref{lem psi u f} then implies that
\eqref{eq indep chi 2} equals zero and the proof is complete.
\end{proof}

\subsection{The flat $g$-trace for equivariant flows} \label{sec flow WF}

We now prove Proposition \ref{prop restr K}, which states that the condition \eqref{eq condition WF K} holds for the operator $A \Phi_{\psi}^*$ if $\varphi$ is $g$-nondegenerate.
Consider the map
\[
\Psi_x\colon (\Rnz) \times M \times M \to M \times M
\]
given by
\[
\Psi_x(t,m,m') = (x\varphi_t(m),m').
\]
\begin{lemma}\label{lem TPhi}
The derivative of $\Psi_x$ at $(t,m,m')$ is given by
\[
T_{(t,m,m')}\Psi_x(s,v,v') = \bigl( su(x \varphi_t(m)) +T_m(x\varphi_t)(v), v' \bigr),
\]
for all $s \in T_t\R = \R$, $v \in T_mM$ and $v' \in T_{m'}M$.
\end{lemma}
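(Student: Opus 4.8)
The plan is to decompose $\Psi_x$ into its two components and differentiate each separately via the chain rule. Since $\Psi_x(t,m,m') = (x\varphi_t(m), m')$, the second component is the projection $(t,m,m') \mapsto m'$, whose derivative at any point sends $(s,v,v')$ to $v'$. So all the content is in the first component, the map $F\colon \Rnz \times M \to M$ given by $F(t,m) = x\varphi_t(m)$, and one simply appends the trivial second slot at the end.

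To differentiate $F$, I would write it as $F = L_x \circ \Theta$, where $\Theta(t,m) := \varphi_t(m)$ and $L_x\colon M \to M$ is the diffeomorphism given by the action of $x \in G$. The derivative of $\Theta$ at $(t,m)$ is computed slot by slot: the partial derivative in $t$ is $\frac{d}{d\tau}\bigr|_{\tau = t}\varphi_\tau(m) = u(\varphi_t(m))$, by the defining property of $\varphi$ as the flow of $u$; the partial derivative in $m$ is $T_m\varphi_t$. Hence
\[
T_{(t,m)}\Theta(s,v) = s\, u(\varphi_t(m)) + T_m\varphi_t(v) \in T_{\varphi_t(m)}M,
\]
and the chain rule gives
\[
T_{(t,m)}F(s,v) = s\, T_{\varphi_t(m)}L_x\bigl(u(\varphi_t(m))\bigr) + T_m(x\circ \varphi_t)(v).
\]

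The only step that uses the hypotheses rather than pure calculus is rewriting the first term. Equivariance of $\varphi$ means its generating vector field $u$ is $G$-invariant, i.e.\ $T_pL_x(u(p)) = u(xp)$ for all $p \in M$ and $x \in G$; indeed, differentiating the identity $x\varphi_t(p) = \varphi_t(xp)$ in $t$ at $t = 0$ yields exactly this. Applying it with $p = \varphi_t(m)$ turns the first term into $s\, u(x\varphi_t(m))$, and appending the trivial second component gives the claimed formula. I do not expect any genuine obstacle here: the argument is routine, and the only care needed is the bookkeeping of base points and which tangent spaces the maps act between, together with the $G$-invariance of $u$.
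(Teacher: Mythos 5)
Your proof is correct and is exactly the ``direct computation'' that the paper leaves to the reader: differentiate the flow in $t$ to get $u$, in $m$ to get $T_m\varphi_t$, and use $G$-equivariance of $\varphi$ (equivalently, $G$-invariance of $u$) to move the $x$ inside. You rightly flag that last step as the only place a hypothesis beyond calculus is used.
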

\begin{proof}
This is a direct computation.
\end{proof}

Consider the set
\beq{eq def S}
S_x:= \Psi_x^{-1}(\Delta(M)) 
= \{(t,m,x\varphi_t(m)) \in (\Rnz) \times M \times M; t \in \R, m \in M\}.
\eeq
\begin{lemma}\label{lem TS}
The set $S_x$ is a smooth submanifold, and its tangent bundle is
\beq{eq TS}
TS_x = \{\bigl( (t,s), v,  su(x\varphi_t(m)) +T_m(x\varphi_t)(v)\bigr); s,t \in \R, m \in M, v \in T_mM\}.
\eeq
\end{lemma}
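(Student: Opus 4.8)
The goal is to prove Lemma \ref{lem TS}: that $S_x = \Psi_x^{-1}(\Delta(M))$ is a smooth submanifold with the stated tangent bundle.

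The plan is to show that $\Psi_x$ is transverse to $\Delta(M)$, so that the preimage $S_x$ is automatically a smooth submanifold by the transversality theorem, and then compute its tangent space as the preimage of $T\Delta(M)$ under $T\Psi_x$.

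First I would verify transversality. Fix a point $(t,m,m') \in S_x$, so $m' = x\varphi_t(m)$, and write $q := m' = x\varphi_t(m)$. We need $\im\bigl(T_{(t,m,m')}\Psi_x\bigr) + T_{(q,q)}\Delta(M) = T_{(q,q)}(M \times M) = T_qM \oplus T_qM$. From Lemma \ref{lem TPhi}, the image of $T_{(t,m,m')}\Psi_x$ contains all vectors of the form $\bigl(su(q) + T_m(x\varphi_t)(v), v'\bigr)$ with $s \in \R$, $v \in T_mM$, $v' \in T_{m'}M = T_qM$. In particular, taking $s = 0$ and $v = 0$, the image contains $\{0\} \oplus T_qM$. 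Since $T_{(q,q)}\Delta(M) = \{(w,w); w \in T_qM\}$, already $(\{0\}\oplus T_qM) + T_{(q,q)}\Delta(M) = T_qM \oplus T_qM$. Hence $\Psi_x$ is transverse to $\Delta(M)$, and therefore $S_x = \Psi_x^{-1}(\Delta(M))$ is a smooth submanifold of $(\Rnz) \times M \times M$.

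Next I would identify $TS_x$. By the standard description of the tangent space of a transverse preimage, for $(t,m,m') \in S_x$ with $q = m' = x\varphi_t(m)$,
\[
T_{(t,m,m')}S_x = \bigl(T_{(t,m,m')}\Psi_x\bigr)^{-1}\bigl(T_{(q,q)}\Delta(M)\bigr).
\]
A tangent vector $\bigl((t,s),v,v'\bigr) \in T_t(\Rnz)\oplus T_mM \oplus T_{m'}M$ lies in this preimage precisely when $T_{(t,m,m')}\Psi_x\bigl((t,s),v,v'\bigr) = \bigl(su(q) + T_m(x\varphi_t)(v), v'\bigr)$ lies in $T_{(q,q)}\Delta(M)$, i.e.\ when its two components agree:
\[
v' = su(x\varphi_t(m)) + T_m(x\varphi_t)(v).
\]
This says exactly that $v'$ is determined by $s$, $v$ (and the base point), with $s \in \R$, $t \in \R$, $m \in M$, $v \in T_mM$ free, which is the asserted formula \eqref{eq TS}. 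I would also note in passing that this confirms $\dim S_x = 1 + \dim M$, consistent with the codimension of $\Delta(M)$ in $M\times M$ being $\dim M$.

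I do not expect a serious obstacle here; the only point requiring a little care is making the identification of $T_{(t,m,m')}S_x$ with the kernel/preimage description rigorous (one can instead argue directly: differentiate the obvious parametrisation $(t,m)\mapsto (t,m,x\varphi_t(m))$ of $S_x$, whose image is manifestly a smooth submanifold and whose differential at $(t,m)$ sends $((t,s),v)$ to $\bigl((t,s),v, su(x\varphi_t(m)) + T_m(x\varphi_t)(v)\bigr)$, giving both the submanifold claim and the tangent bundle formula at once). I would present the direct parametrisation argument as the cleanest route, using the transversality remark only as a sanity check on dimensions.
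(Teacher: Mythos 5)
Your proof is correct and follows essentially the same route as the paper: transversality of $\Psi_x$ to $\Delta(M)$ verified via Lemma \ref{lem TPhi} (the image of $T\Psi_x$ contains $\{0\}\times T_{x\varphi_t(m)}M$), followed by the preimage description $TS_x = (T\Psi_x)^{-1}(T\Delta(M))$, which yields \eqref{eq TS}. The direct parametrisation $(t,m)\mapsto(t,m,x\varphi_t(m))$ you mention at the end is a legitimate shortcut, but the argument you actually wrote out is the paper's.
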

\begin{proof}
If $t \in \Rnz$ and $m \in M$, then Lemma \ref{lem TPhi} implies that the image of $T_{(t, m, x\varphi_t(m))}\Psi_x$ contains $\{0\} \times T_{x\varphi_t(m)}M$. So the sum of this image and 
\[
T_{\Psi_x(t, m, x\varphi_t(m))} \Delta(M) = \{(v,v); v \in T_{x\varphi_t(m)}M\}
\]
is the whole tangent space $T_{\Psi_x(t, m, x\varphi_t(m))} (M \times M)$. This implies that $\Psi_x$ is transversal to $\Delta(M)$, and that $S_x$ is a smooth submanifold of $\Rnz \times M \times M$.

The tangent bundle to $TS_x$ is
\[
TS_x = (T\Psi_x)^{-1}(T(\Delta(M))).
\]
By  Lemma \ref{lem TPhi}, this equals the right hand side of \eqref{eq TS}.
\end{proof}

The following lemma is used on page 24 of \cite{DZ16} for the identity element in $G$, where $V(x)$ should be replaced by $V(\varphi_t(x))$.
\begin{lemma}\label{lem NS}
The conormal bundle of $S_x$ is
\[
\cN^*(S_x) = 
\{\bigl( (t, \langle \zeta, u(m)\rangle) , \zeta, -T_m(\varphi_{-t}x^{-1})^*\zeta)\bigr);  t \in \R\setminus \{0\}, \zeta \in T_{m}M\}.
\]
\end{lemma}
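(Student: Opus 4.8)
The plan is to compute $\cN^*(S_x)$ directly from the description of $TS_x$ in Lemma \ref{lem TS}. Recall that the conormal bundle consists of covectors annihilating the tangent space: a covector $\xi \in T^*_{(t,m,x\varphi_t(m))}(\Rnz \times M \times M)$ lies in $\cN^*(S_x)$ if and only if $\langle \xi, w\rangle = 0$ for all $w \in T_{(t,m,x\varphi_t(m))}S_x$. Writing a general covector as $((t,\sigma),\zeta_1,\zeta_2)$ with $\sigma \in \R = T^*_t\R$, $\zeta_1 \in T^*_mM$ and $\zeta_2 \in T^*_{x\varphi_t(m)}M$, I will pair it with the general tangent vector $\bigl((t,s),v, su(x\varphi_t(m)) + T_m(x\varphi_t)(v)\bigr)$ from \eqref{eq TS}.

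First I would carry out this pairing explicitly:
\[
\bigl\langle ((t,\sigma),\zeta_1,\zeta_2),\ ((t,s),v, su(x\varphi_t(m)) + T_m(x\varphi_t)(v))\bigr\rangle
= \sigma s + \langle \zeta_1, v\rangle + \langle \zeta_2, su(x\varphi_t(m)) + T_m(x\varphi_t)(v)\rangle.
\]
Grouping the terms that are linear in $s$ and those linear in $v$, this equals
\[
s\bigl(\sigma + \langle \zeta_2, u(x\varphi_t(m))\rangle\bigr) + \langle \zeta_1 + T_m(x\varphi_t)^*\zeta_2,\ v\rangle.
\]
Since $s \in \R$ and $v \in T_mM$ range freely and independently, this vanishes for all $(s,v)$ if and only if $\sigma = -\langle \zeta_2, u(x\varphi_t(m))\rangle$ and $\zeta_1 = -T_m(x\varphi_t)^*\zeta_2$. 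Setting $\zeta := \zeta_1$, the second equation gives $\zeta_2 = -\bigl(T_m(x\varphi_t)^*\bigr)^{-1}\zeta = -T_{x\varphi_t(m)}\bigl((x\varphi_t)^{-1}\bigr)^*\zeta = -T_{x\varphi_t(m)}(\varphi_{-t}x^{-1})^*\zeta$, using that $(x\varphi_t)^{-1} = \varphi_{-t}\circ x^{-1}$ and the chain rule for cotangent maps. Substituting $\zeta_2$ back into the first equation and using the equivariance relation $T_m(x\varphi_t)(u(m)) = u(x\varphi_t(m))$ (so that $\langle \zeta_2, u(x\varphi_t(m))\rangle = -\langle \zeta, T_{x\varphi_t(m)}(\varphi_{-t}x^{-1})(u(x\varphi_t(m)))\rangle = -\langle\zeta, u(m)\rangle$, wait—more simply: $\langle \zeta_2, u(x\varphi_t(m))\rangle = \langle -T_{x\varphi_t(m)}(\varphi_{-t}x^{-1})^*\zeta, u(x\varphi_t(m))\rangle = -\langle \zeta, T(\varphi_{-t}x^{-1})u(x\varphi_t(m))\rangle = -\langle \zeta, u(m)\rangle$) yields $\sigma = \langle \zeta, u(m)\rangle$. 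This reproduces exactly the stated formula, with the base point $m$ identified with the first $M$-coordinate and $t \in \Rnz$.

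The only genuinely delicate point is bookkeeping with the identifications: the formula in the statement is written with $m$ denoting the first coordinate (so the point of $S_x$ is $(t,m,x\varphi_t(m))$), the covector in the first $M$-slot is $\zeta \in T^*_mM$, and the covector in the second $M$-slot is $-T_m(\varphi_{-t}x^{-1})^*\zeta$—but $\varphi_{-t}x^{-1}$ maps $x\varphi_t(m)$ to $m$, so strictly $T_m(\varphi_{-t}x^{-1})$ is shorthand for the derivative of $\varphi_{-t}x^{-1}$ at the point $x\varphi_t(m)$, equivalently written $T_{x\varphi_t(m)}(\varphi_{-t}x^{-1})$. I would insert a sentence clarifying this convention so the inverse-transpose manipulation $\zeta_2 = -T_{x\varphi_t(m)}(\varphi_{-t}x^{-1})^*\zeta$ reads correctly. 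Beyond that, the proof is the routine transversal-pullback computation above; the cited transversality of $\Psi_x$ to $\Delta(M)$ from Lemma \ref{lem TS} guarantees $S_x$ is a submanifold so that $\cN^*(S_x)$ is well-defined, and no further input is needed.
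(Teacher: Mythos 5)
Your proof is correct and follows essentially the same route as the paper's: pair a general covector with the description of $TS_x$ from Lemma \ref{lem TS}, separate the conditions coming from $s$ and from $v$, invert the transpose to solve for the second covector component, and use the equivariance relation $u(x\varphi_t(m)) = T_m(x\varphi_t)(u(m))$ to simplify the first component. Your remark that $T_m(\varphi_{-t}x^{-1})$ is really the derivative at the point $x\varphi_t(m)$ is a fair clarification of a notational shorthand the paper also uses.
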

\begin{proof}
Let $a \in \R = T^*_t\R$, $\zeta \in T^*_mM$ and $\xi \in T^*_{x\varphi_t(m)}M$. By Lemma \ref{lem TS}, we have $(a,\zeta, \xi) \in \cN^*(S_x)$ if and only if for all  $s \in \R$ and $v \in T_mM$,
\[
\begin{split}
0 &=\bigl \langle(a,\zeta, \xi) ,  \bigl( (t,s), v,  su(x\varphi_t(m)) +T_m(x\varphi_t)(v)\bigr)\bigr\rangle\\
&=s(a+   \langle \xi, u(x\varphi_t(m)) \rangle) + \langle\zeta, v \rangle
	+\langle \xi, T_m(x\varphi_t)(v) \rangle.
\end{split}
\]
So
\[
\begin{split}
a+   \langle \xi, u(x\varphi_t(m)) \rangle &= 0\\
\zeta
+(T_m(x\varphi_t))^* \xi&=0.
\end{split}
\]
The second equality implies that $\xi = -(T_m\varphi_{-t}x^{-1})^*\zeta$, and then the first becomes
\[
a =  \langle (T_m\varphi_{-t}x^{-1})^*\zeta, u(x\varphi_t(m)) \rangle = \langle \zeta, u(m)\rangle.
\]
In the last equality, we used the fact that 
\[
u(x\varphi_t(m)) = T_m (x\varphi_t )(u(m)).
\]
\end{proof}

\begin{lemma}\label{lem NS NDelta}
Suppose that the flow $\varphi$ is $x$-nondegenerate. 
Then 
\[
\cN^*(S_{x^{-1}}) \cap \cN^*(\R\setminus\{0\} \times \Delta(M)) = \bigl( \Rnz \times \Delta(M^{x^{-1}\circ \varphi_t}) \bigr) \times \{0\}.
\]
\end{lemma}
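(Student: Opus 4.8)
\textbf{Proof plan for Lemma \ref{lem NS NDelta}.}

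The strategy is to intersect the explicit description of $\cN^*(S_{x^{-1}})$ from Lemma \ref{lem NS} with the conormal bundle of $\R\setminus\{0\} \times \Delta(M)$, and to show that the $x$-nondegeneracy hypothesis forces the covector component to vanish, leaving exactly the zero section over the fixed-point locus. First I would write down both conormal bundles concretely. By Lemma \ref{lem NS} (applied with $x$ replaced by $x^{-1}$), a point of $\cN^*(S_{x^{-1}})$ over $(t, m, x^{-1}\varphi_t(m))$ has the form $\bigl((t, \langle \zeta, u(m)\rangle), \zeta, -T_m(\varphi_{-t}x)^*\zeta\bigr)$ for some $\zeta \in T^*_m M$. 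On the other hand, $\cN^*(\R\setminus\{0\} \times \Delta(M))$ consists of covectors of the form $\bigl((t,0), \xi, -\xi\bigr)$ over $(t, m, m)$ with $\xi \in T^*_m M$ (the $\R$-component of the conormal must be zero since we take the whole line $\Rnz$, and the conormal to the diagonal is $\{(\xi, -\xi)\}$).

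Next I would match these two descriptions. For a covector to lie in the intersection, first the base points must agree, which forces $x^{-1}\varphi_t(m) = m$, i.e.\ $m \in M^{x^{-1}\circ\varphi_t}$; and then the three covector components must match: $\langle \zeta, u(m)\rangle = 0$, $\zeta = \xi$, and $-T_m(\varphi_{-t}x)^*\zeta = -\xi = -\zeta$. The last equation says $T_m(\varphi_{-t}x)^*\zeta = \zeta$, equivalently (dualizing) $\zeta$ is fixed by $T_m(\varphi_{-t}x)^* = \bigl(T_m(x^{-1}\varphi_t)^{-1}\bigr)^*$, so $\bigl(T_m(x^{-1}\varphi_t)^* - 1\bigr)\zeta = 0$. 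Combined with $\langle \zeta, u(m)\rangle = 0$, I would argue that $\zeta$ descends to an element of $(T_m M/\R u(m))^*$ that is annihilated by the transpose of $(T_m(x^{-1}\varphi_t) - 1)|_{T_m M/\R u(m)}$. Here is where the hypothesis enters: $x$-nondegeneracy (Definition \ref{def nondeg}, with $g$ replaced by $x$) says precisely that this induced endomorphism of $T_m M/\R u(m)$ is invertible whenever $\varphi_t(m) = xm$, hence so is its transpose, forcing $\zeta = 0$. (One small point to check: the condition $\langle\zeta, u(m)\rangle = 0$ guarantees $\zeta$ genuinely factors through the quotient; and since $u(m) \neq 0$, the quotient map on covectors is surjective onto the annihilator, so no information is lost.) Therefore every covector in the intersection is zero, and the base point ranges over all $(t, m, m)$ with $t \in \Rnz$ and $m \in M^{x^{-1}\circ\varphi_t}$, which is exactly $\bigl(\Rnz \times \Delta(M^{x^{-1}\circ\varphi_t})\bigr) \times \{0\}$.

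Conversely, I would note the reverse inclusion is immediate: the zero covector lies in every conormal bundle, and over a point $(t,m,m)$ with $m \in M^{x^{-1}\circ\varphi_t}$ we have $x^{-1}\varphi_t(m) = m$ so $(t,m,m) \in S_{x^{-1}}$ and also $(t,m,m) \in \Rnz \times \Delta(M)$, so the zero section over this locus sits in both conormal bundles. The main obstacle I anticipate is purely bookkeeping: keeping the pullback/transpose identities straight when passing from $\varphi_{-t}x$ to $x^{-1}\varphi_t$ and its inverse, and verifying carefully that the ``mod $\R u(m)$'' reduction is compatible on the covector side — i.e.\ that annihilation of $\zeta$ on $u(m)$ together with the fixed-vector equation really is equivalent to invertibility of the quotient endomorphism failing. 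Once that linear-algebra translation is set up cleanly (it is essentially the observation already used in the proof of Lemma \ref{lem Gammagl discrete} that $\ker(T_m(x^{-1}\circ\varphi_t) - 1) = \R u(m)$, dualized), the rest is direct substitution.
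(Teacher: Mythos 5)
Your proposal is correct and follows essentially the same route as the paper: substitute the explicit description of $\cN^*(S_{x^{-1}})$ from Lemma \ref{lem NS}, intersect with $\cN^*(\Rnz\times\Delta(M))=(\Rnz\times\{0\})\times\{(\xi,-\xi)\}$, and use $x$-nondegeneracy to force $\zeta=0$. You in fact spell out the one step the paper leaves implicit — that $\langle\zeta,u(m)\rangle=0$ lets $\zeta$ descend to $(T_mM/\R u(m))^*$, where the transposed quotient endomorphism is invertible — so no changes are needed.
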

\begin{proof}
 By Lemma \ref{lem NS}, an element of $\cN^*(S_{x^{-1}})$ is of the form 
 \[
 \bigl( (t,\langle \zeta, u(m)\rangle) , \zeta, -T_m(\varphi_{-t}x)^*\zeta)\bigr),
 \]
for $t \in \R\setminus \{0\}$ and $\zeta \in T_{m}M$. Suppose that this element also lies in $\cN^*(\R\setminus\{0\} \times \Delta(M))$. Then $(m,x\varphi_t(m)) \in \Delta(M)$, so $x\circ \varphi_t(m) = m$. Furthermore,
\[
 \cN^*(\R\setminus\{0\} \times \Delta(M)) = (\Rnz \times \{0\}) \times
 \{(\xi, -\xi); m \in M, \xi \in T^*_mM\},
\]
and it follows that
\[
\begin{split}
 \langle \zeta, u(m)\rangle&= 0 \\
(T_m(x\circ \varphi_{-t}) - 1)^*\zeta &= 0.
\end{split}
\]
We thus see that the $x$-nondegeneracy property of  $\varphi$ implies that $\zeta = 0$.
%
%
%
\end{proof}

\begin{proof}[Proof of Proposition \ref{prop restr K}]
Since $hgh^{-1}\cdot AK$ is a $\delta$-distribution on the set $S_{hg^{-1}h^{-1}}$ in \eqref{eq def S}, 
\beq{eq WF K}
\WF(hgh^{-1} \cdot AK) = \cN^*(S_{hg^{-1}h^{-1}}) \setminus \{0\}.
 \eeq
 (See Example 8.2.5 in \cite{Hormander1}.)
 By equivariance of $\varphi_t$, the condition \eqref{eq flow nondeg} implies that $\det\bigl( (T_{m}(hg^{-1}h^{-1}\circ \varphi_t) - 1)_{T_{m}M/\R u(m)} \bigr)\not=0$ for all $h \in G$. So $\varphi$ is $hgh^{-1}$-nondegenerate for all $h \in G$. 
Hence the proposition follows from Lemma \ref{lem NS NDelta}.
\end{proof}


In the special case where $E$ is the trivial line bundle, with the trivial action by $G$ on fibres, and with $A$ the identity, we will use an equivalent expression for $\Tr^{\flat}_g(\Phi^*)$. Recall the notation \eqref{eq def Delta x}.
\begin{lemma}\label{lem flat trace trivial}
Suppose that $E = M \times \C$, with the trivial $G$-action on fibres. 
Let $K$ be as in \eqref{eq def x K}.  If $\varphi$ is $g$-nondegenerate, then $K$ has a well-defined restriction to $\Rnz \times \Delta_{hg^{-1}h^{-1}}$ for all $h \in G$. Furthermore, 
\beq{eq flat trace trivial}
\langle \Tr^{\flat}_g( \Phi^*), \psi \rangle = 
\int_{G/Z} \langle   K|_{\Rnz \times \Delta_{hg^{-1}h^{-1}}(M)}, \psi \otimes (p_2^*\chi)|_{\Delta_{hg^{-1}h^{-1}}(M)} \rangle\, d(hZ)
\eeq
for all $\psi \in C^{\infty}_c(\Rnz)$.
\end{lemma}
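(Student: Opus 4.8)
The plan is to reduce the claim to a per-$h$ identity via Corollary \ref{cor g anosov trace cl}, and then prove that identity by transporting everything along the group-translation diffeomorphism $\tau_h := \id_{\Rnz}\times(hg^{-1}h^{-1})\times\id_M$ of $\Rnz\times M\times M$. The key observation, which is the analogue (with the inert $\Rnz$-factor carried along) of the identity $x\cdot\kappa = (x^{-1}\times\id_M)^*\kappa$ used in the proof of Lemma \ref{lem pullback cts Delta}, is that $hgh^{-1}\cdot K = \tau_h^*K$, and that $\tau_h$ restricts to a diffeomorphism of $\Rnz\times\Delta(M)$ onto $\Rnz\times\Delta_{hg^{-1}h^{-1}}(M)$ which modifies only the first $M$-coordinate. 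Moreover, since $E$ is a trivial line bundle with trivial $G$-action on fibres, the fibre-wise trace of Definition \ref{def fibre trace} is the identity map and $AK = K$ for $A = \Id$, so $K$ itself is exactly the object $hgh^{-1}\cdot AK$ of Proposition \ref{prop restr K} at $h=e$.

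To see that $K|_{\Rnz\times\Delta_{hg^{-1}h^{-1}}(M)}$ is well-defined, note that $\tau_h(\Rnz\times\Delta(M)) = \Rnz\times\Delta_{hg^{-1}h^{-1}}(M)$ gives $\cN^*(\Rnz\times\Delta(M)) = \tau_h^*\,\cN^*(\Rnz\times\Delta_{hg^{-1}h^{-1}}(M))$, while $\WF(\tau_h^*K) = \tau_h^*\,\WF(K)$ by functoriality of wave front sets under diffeomorphisms. Combining these, $\WF(hgh^{-1}\cdot K)\cap\cN^*(\Rnz\times\Delta(M)) = \tau_h^*\bigl(\WF(K)\cap\cN^*(\Rnz\times\Delta_{hg^{-1}h^{-1}}(M))\bigr)$, and the left-hand side is empty by Proposition \ref{prop restr K}, since $\varphi$ is $g$-nondegenerate. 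As $\tau_h^*$ is injective on nonzero cotangent directions, $\WF(K)\cap\cN^*(\Rnz\times\Delta_{hg^{-1}h^{-1}}(M)) = \emptyset$, so the restriction exists by Theorem 8.2.4 in \cite{Hormander1}.

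For the formula, Corollary \ref{cor g anosov trace cl} (whose absolute-convergence hypothesis is part of the standing assumption that $\Phi^*_\psi$ is flat $g$-trace class) applied with $E$ trivial and $A = \Id$ gives
\[
\langle\Tr^{\flat}_g(\Phi^*),\psi\rangle = \int_{G/Z}\bigl\langle (hgh^{-1}\cdot K)|_{\Rnz\times\Delta(M)},\ \psi\otimes(p_2^*\chi)|_{\Delta(M)}\bigr\rangle\, d(hZ),
\]
so it suffices to show, for each $h\in G$, that the integrand equals $\bigl\langle K|_{\Rnz\times\Delta_{hg^{-1}h^{-1}}(M)},\ \psi\otimes(p_2^*\chi)|_{\Delta_{hg^{-1}h^{-1}}(M)}\bigr\rangle$. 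Writing $hgh^{-1}\cdot K = \tau_h^*K$ and using the standard compatibility of restriction with pullback along a diffeomorphism, one has $(\tau_h^*K)|_{\Rnz\times\Delta(M)} = (\tau_h|_{\Rnz\times\Delta(M)})^*\bigl(K|_{\Rnz\times\Delta_{hg^{-1}h^{-1}}(M)}\bigr)$. Hence the integrand equals the pairing of $K|_{\Rnz\times\Delta_{hg^{-1}h^{-1}}(M)}$ with the pushforward of $\psi\otimes(p_2^*\chi)|_{\Delta(M)}$ along $\tau_h|_{\Rnz\times\Delta(M)}$, the Jacobian being trivial because $dm$ is $G$-invariant; and since $\tau_h$ modifies only the first $M$-coordinate while $p_2^*\chi$ depends only on the second, this pushforward is exactly $\psi\otimes(p_2^*\chi)|_{\Delta_{hg^{-1}h^{-1}}(M)}$. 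Integrating over $G/Z$ completes the proof.

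I expect the only delicate point to be the bookkeeping of inverses, in the definition of $\tau_h$ and in the conormal-bundle pullbacks, together with the verification that $\tau_h$ carries the cutoff test function attached to $\Delta(M)$ to the one attached to $\Delta_{hg^{-1}h^{-1}}(M)$ — which works precisely because $\tau_h$ leaves the second $M$-coordinate, on which $p_2^*\chi$ depends, untouched. There is no genuine analytic difficulty beyond Proposition \ref{prop restr K} and the functoriality of wave front sets and of restriction under diffeomorphisms (Theorems 8.2.3--8.2.4 in \cite{Hormander1}).
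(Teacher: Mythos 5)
Your proof is correct and follows essentially the same route as the paper's: both reduce \eqref{eq flat trace trivial} to the per-$h$ translation identity between $\Rnz\times\Delta(M)$ and $\Rnz\times\Delta_{hg^{-1}h^{-1}}(M)$, and both rest on the wave-front analysis behind Proposition \ref{prop restr K}. The only cosmetic difference is that you obtain the transversality to $\cN^*(\Rnz\times\Delta_{hg^{-1}h^{-1}}(M))$ by transporting Proposition \ref{prop restr K} along the diffeomorphism $\tau_h$, whereas the paper recomputes $\WF(K)=\cN^*(S_e)\setminus\{0\}$ and $\cN^*(\Delta_x(M))$ directly; likewise, the paper verifies the per-$h$ identity for smooth kernels and extends by density, which is exactly the content of the ``standard compatibility of restriction with pullback'' that you invoke.
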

\begin{proof}
Analogously to the proof of Proposition \ref{prop restr K}, one can show that
\[
\WF( K) = \cN^*(S_{e}) \setminus \{0\}
\]
and that for all $x \in G$ 
\[
\cN^*(\Delta_{x}(M)) = \{  (-(T_mx^{-1})^*\xi, \xi); m \in M, \xi \in T^*_mM\}.
\]
This implies that $\WF( K)  \cap \cN^*(\Rnz \times \Delta_{x}(M)) =\emptyset$, as in the proof of Proposition \ref{prop restr K}.

For $\psi \in C^{\infty}_c(\Rnz)$ and $f \in C^{\infty}_c(M)$, and a smooth kernel $K$,  the equality
 \beq{eq flat trace trivial 2}
 \langle x\cdot  K|_{\Rnz \times \Delta(M)} , \psi \otimes (p_2^*f)|_{\Delta(M)}\rangle = 
  \langle   K|_{\Rnz \times \Delta_{x^{-1}}(M)} , \psi \otimes (p_2^*f)|_{\Delta_{x^{-1}}(M)}\rangle
 \eeq
follows from the definitions. Analogously to the proof of Lemma \ref{lem Trg shift}, both sides of this equality depend continuously on $K$ in a space of distributions in which the smooth sections are dense. Hence \eqref{eq flat trace trivial 2} 
extends to distributional kernels by continuity.
\end{proof}

\subsection{A relation with the classical flat trace} \label{sec class flat trace}

The proof of this Proposition \ref{prop KX} will be given after we establish two key lemmas.

Since the covering map $q_M\colon M \to X$ is a submersion, any distributional section can be pulled back along $q_M$. 
\begin{lemma}\label{lem q star u}
For  $v \in \Gamma^{\infty}(E_X^*)'$, and $s \in \Gamma^{\infty}_c(E^*)$,
\beq{eq q star u}
\langle q_M^*v, s\rangle = \Bigl\langle v, \sum_{\gamma \in \Gamma} \gamma \cdot s\Bigr\rangle.
\eeq
\end{lemma}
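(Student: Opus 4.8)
The plan is to unwind both sides of \eqref{eq q star u} using the definition of pullback along the covering map $q_M$ together with the fact that sections of $E_X^*$ over $X$ correspond exactly to $\Gamma$-invariant sections of $E^*$ over $M$. First I would recall that, because $q_M$ is a covering (in particular a submersion), the pullback $q_M^* v$ of a distributional section $v \in \Gamma^{\infty}(E_X^*)'$ is characterized by its action on compactly supported test sections $s \in \Gamma^{\infty}_c(E^*)$, and that the adjoint operation is the pushforward $(q_M)_*$, i.e.\ $\langle q_M^* v, s\rangle = \langle v, (q_M)_* s\rangle$. So the content of the lemma is the identification $(q_M)_* s = \sum_{\gamma \in \Gamma} \gamma \cdot s$, viewed as a section of $E_X^*$ over $X$ after descending the $\Gamma$-invariant sum.

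The key steps, in order: (1) Fix a fundamental domain picture: choose a locally finite open cover of $X$ by evenly covered sets $U_i$, with a partition of unity $\{\rho_i\}$ subordinate to it, so that $q_M^{-1}(U_i)$ is a disjoint union of sheets indexed by $\Gamma$. (2) Write $s = \sum_i (q_M^* \rho_i) s$; since $s$ has compact support, only finitely many terms are nonzero, and each term is supported in a single sheet over $U_i$ (after possibly refining). (3) For a section supported in one sheet, the pushforward $(q_M)_*$ is just transport along the diffeomorphism $q_M$ restricted to that sheet, and summing over sheets is exactly summing over the $\Gamma$-translates; this shows $(q_M)_* s$, as a section of $E_X^*$, equals the descent of the $\Gamma$-invariant section $\sum_{\gamma} \gamma \cdot s$ of $E^*$ (the sum is locally finite because $s$ is compactly supported and $\Gamma$ acts properly discontinuously). (4) Pair with $v$: since $v \in \Gamma^{\infty}(E_X^*)'$ pairs with sections of $E_X^*$, and $\Gamma^{\infty}(E_X^*) \cong \Gamma^{\infty}(E^*)^{\Gamma}$, we have $\langle v, (q_M)_* s\rangle = \langle v, \sum_{\gamma \in \Gamma} \gamma \cdot s\rangle$ with the right-hand side interpreted via this identification. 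Combining with step (1) gives \eqref{eq q star u}.

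A cleaner alternative, which I would probably actually write: verify the identity first for \emph{smooth} $v \in \Gamma^{\infty}(E_X^*)$, where both sides are honest integrals. On the left, $\langle q_M^* v, s\rangle = \int_M \langle (q_M^* v)(m), s(m)\rangle\, dm$; on the right, using that $\sum_{\gamma} \gamma\cdot s$ is $\Gamma$-invariant and unfolding the integral over $M$ as an integral over $X$ against this $\Gamma$-invariant section (a standard unfolding, using that $dm$ is the pullback of the measure on $X$), one gets $\int_X \langle v(x), \overline{(\sum_\gamma \gamma\cdot s)}(x)\rangle\, dx$, and these agree by the same unfolding applied directly to $q_M^* v$ and $s$. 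Then extend to distributional $v$ by density of smooth sections in $\Gamma^{\infty}(E_X^*)'$ and continuity of both sides in $v$ (the right-hand side is manifestly continuous in $v$ since $\sum_\gamma \gamma\cdot s$ is a fixed test section once $s$ is fixed; the left-hand side is continuous because pullback along a submersion is continuous on distributions).

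The main obstacle is making precise the convergence and $\Gamma$-invariance of the sum $\sum_{\gamma \in \Gamma}\gamma\cdot s$ and the unfolding identity that turns an integral over $M$ against a $\Gamma$-invariant object into an integral over $X = M/\Gamma$ — one must be careful that $s$ compactly supported plus proper discontinuity of the $\Gamma$-action makes the sum locally finite, hence a well-defined smooth section, and that the chosen densities on $M$ and $X$ are compatible via $q_M$. None of this is deep, but it is the only place where something could go wrong if stated carelessly; everything else is formal manipulation of the adjoint relation $\langle q_M^* v, s\rangle = \langle v, (q_M)_* s\rangle$.
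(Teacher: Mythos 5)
Your ``cleaner alternative'' is exactly the paper's argument: verify the identity for smooth $v$ by unfolding the integral over $M$ into an integral over $X$ (the paper inserts $1=\sum_{\gamma}\chi(\gamma m)$ for a cutoff function or fundamental-domain indicator and substitutes $m'=\gamma m$), then extend to distributional $v$ by continuity of both sides. The proposal is correct and takes essentially the same route.
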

\begin{proof}
If $v \in \Gamma^{\infty}(E_X)$, then
\begin{align}
\langle q_M^*v, s\rangle &= \int_M \bigl\langle  s(m), v(\Gamma m) \bigl\rangle \, dm \nonumber \\
&= \sum_{\gamma \in \Gamma} \int_M \chi(\gamma m)\bigl\langle  s(m), v(\Gamma m) \bigl\rangle\, dm.\label{eq q star u 2}
\end{align}
Here $\chi$ is as in \eqref{eq cutoff fn}, but in the current setting it may also be replaced by the indicator function of a fundamental domain. In the pairings  in the integrands, we identified $ E_m \cong (E_X)_{\Gamma m} \cong (E/\Gamma)_{\Gamma m}$ via $w \mapsto \Gamma w$, for $w \in E$.

Substituting $m' = \gamma m$, we find that \eqref{eq q star u 2} equals
\[
 \sum_{\gamma \in \Gamma} \int_M \chi( m')
 \bigl\langle \gamma s(\gamma^{-1}m'), v(\Gamma m')\bigr\rangle \, dm' = 
\int_X 
\Bigl\langle  \sum_{\gamma \in \Gamma} (\gamma \cdot s)(m'), v(\Gamma m')\Bigr\rangle
\, d(\Gamma m'), 
\]
which equals the right hand side of \eqref{eq q star u}.

The case for a general distributional section $v$ follows by continuity of both sides of \eqref{eq q star u} in $v$.
\end{proof}

Let $K_X \in \Gamma_c^{\infty}(\Rnz \times X \times X, \Rnz \times E_X^* \boxtimes E_X)'$ be defined as in \eqref{eq def x K} (with $x = e$ and $A$ the identity), for the flow $\Phi_X$. Similarly, let the distributional sections $K$ and $\gamma \cdot K$, for $\gamma \in \Gamma$, in $ \Gamma_c^{\infty}(\Rnz \times M \times M, \Rnz \times E^* \boxtimes E)'$ be as in \eqref{eq def x K}.
%
%
The main computation in the proof of Proposition \ref{prop KX} is the following.
\begin{lemma}\label{lem KX KM}
We have
\[
q_{\Rnz \times M \times M}^* K_X = \sum_{\gamma \in \Gamma} \gamma \cdot K.
\]
In particular, the right hand side converges 
in $ \Gamma_c^{\infty}\bigl(\Rnz \times M \times M, (\Rnz) \times E^* \boxtimes E\bigr )'$.
\end{lemma}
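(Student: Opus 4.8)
The plan is to verify the identity by testing both sides against an arbitrary test section $\psi \otimes \xi \otimes s$ with $\psi \in C^{\infty}_c(\Rnz)$, $\xi \in \Gamma^{\infty}_c(E_X^*)$ (for the left side, pulled back suitably) and $s \in \Gamma^{\infty}_c(E)$, and to reduce everything to Lemma \ref{lem q star u} applied on the product space. First I would unwind the definition \eqref{eq def x K}: for $\gamma \in \Gamma$, the section $\gamma \cdot K$ pairs with $\psi \otimes \xi \otimes s$ to give $\int_M \langle \xi(m), (\gamma \cdot \Phi^*_{\psi} s)(m)\rangle\, dm$, where $\gamma$ acts on $\Gamma^{\infty}(E)$ in the usual way, i.e.\ $(\gamma \cdot \Phi^*_{\psi}s)(m) = \gamma \cdot ((\Phi^*_{\psi}s)(\gamma^{-1}m))$. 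The key point is that $\Phi$ is $\Gamma$-equivariant and lifts $\varphi$, which is in turn the $\Gamma$-equivariant lift of $\varphi_X$; hence $\Phi^*_{\psi}$ commutes with the $\Gamma$-action and descends to $(\Phi_X)^*_{\psi}$ on $\Gamma^{\infty}(E_X)$. This is exactly the compatibility needed to run Lemma \ref{lem q star u} on the bundle $E^* \boxtimes E \to M \times M$, whose descent is $E_X^* \boxtimes E_X \to X \times X$, with the diagonal $\Gamma$-action (and the trivial action on the $\Rnz$-factor).

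Concretely, the second step is to observe that the Schwartz kernel $K_X$ of $(\Phi_X)^*_{\psi}$ and the kernel $K$ of $\Phi^*_{\psi}$ are related by $q^*_{\Rnz \times M \times M} K_X$ being, by definition of pullback of distributions along the submersion $q_{\Rnz \times M \times M}$, the distribution $s' \mapsto \langle K_X, \sum_{\delta \in \Gamma} \delta \cdot s'\rangle$ for $s' \in \Gamma^{\infty}_c(\Rnz \times M \times M, \Rnz \times E^* \boxtimes E)$, by Lemma \ref{lem q star u} applied with $M \times M$ in place of $M$, $\Gamma$ acting diagonally, and $E^* \boxtimes E$ in place of $E$. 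On the other hand, $\sum_{\gamma \in \Gamma}\gamma \cdot K$ paired with $s'$ is $\sum_{\gamma}\langle K, \gamma^{-1}\cdot s'\rangle$. So the identity reduces to showing $\langle K_X, s''\rangle = \langle K, s''\rangle$ whenever $s''$ is a $\Gamma$-invariant (i.e.\ descended) section, which holds because $\Phi^*_{\psi}$ descends to $(\Phi_X)^*_{\psi}$: unwinding \eqref{eq def x K}, $\langle K, s''\rangle = \int_M \langle \xi''(m), (\Phi^*_{\psi}s''_2)(m)\rangle\, dm$ for the two $E$-components of $s''$, and by $\Gamma$-invariance and the cutoff identity \eqref{eq cutoff fn} this integral over $M$ collapses to the corresponding integral over $X$, which is $\langle K_X, \bar s''\rangle$. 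Here one inserts $\sum_{\gamma}\chi(\gamma m) = 1$ exactly as in the proof of Lemma \ref{lem q star u}, substitutes, and folds the sum into a fundamental domain.

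The final sentence of the lemma — that $\sum_{\gamma}\gamma\cdot K$ converges in $\Gamma^{\infty}_c(\Rnz \times M \times M, \Rnz \times E^* \boxtimes E)'$ — then comes for free: on any fixed test section $s'$, which has compact support, properness of the $\Gamma$-action means only finitely many $\gamma$ contribute to $\langle K, \gamma^{-1}\cdot s'\rangle$ (the support of $\gamma^{-1}\cdot s'$ must meet the support of $K$, which lies over $S_e$; combined with compact support in $m$ and the $\Rnz$-direction this bounds $\gamma$ to a finite set), so the sum is actually finite on each test section and hence converges, with limit $q^*_{\Rnz \times M \times M}K_X$.

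The main obstacle I anticipate is purely bookkeeping rather than conceptual: one must be careful that the $\Gamma$-action used in Lemma \ref{lem q star u} on $E^* \boxtimes E$ over $M \times M$ is the \emph{diagonal} one, that the identification of fibres $E^*_m \boxtimes E_{m'} \cong (E_X^*)_{\Gamma m}\boxtimes (E_X)_{\Gamma m'}$ is compatible with how $\Phi$ and $\Phi_X$ act, and that the $\Rnz$-variable is inert under $\Gamma$ and under the flow lift, so no Jacobian factors or extra group-element twists sneak in. Once the equivariance of $\Phi^*_{\psi}$ over $\varphi$ is pinned down (which is immediate from the construction of the lift), the identity is a direct application of Lemma \ref{lem q star u}.
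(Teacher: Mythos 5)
There is a genuine gap in the second step. You apply Lemma \ref{lem q star u} to the covering $q_{\Rnz \times M \times M}\colon \Rnz \times M \times M \to \Rnz \times X \times X$ "with $\Gamma$ acting diagonally", obtaining $\langle q^*K_X, s'\rangle = \langle K_X, \sum_{\delta \in \Gamma}\delta\cdot s'\rangle$. But the deck transformation group of the product covering $M \times M \to X \times X$ is $\Gamma \times \Gamma$, not the diagonal $\Gamma$ (the quotient of $M \times M$ by the diagonal action is not $X \times X$). The correct application of Lemma \ref{lem q star u} therefore produces a \emph{double} sum, $\langle K_X, \sum_{\gamma,\gamma'\in\Gamma}\psi\otimes\gamma\cdot\xi\otimes\gamma'\cdot s\rangle$, and handling this double sum is the actual content of the proof: after the substitution $m'=\gamma^{-1}m$, $\gamma''=\gamma^{-1}\gamma'$, the sum over $\gamma$ (the diagonal copy, under which $K$ is invariant by equivariance of $\Phi^*_\psi$) collapses via the cutoff identity $\sum_\gamma\chi(\gamma m')=1$, while the sum over $\gamma''$ (acting on one factor only) survives and yields $\sum_{\gamma''}\gamma''\cdot K$.

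Your single-sum version conflates these two copies of $\Gamma$: the action appearing in $\gamma\cdot K$ is the action on one factor of $M\times M$ (it is the kernel of $\gamma\circ\Phi^*_\psi$), whereas the invariance you invoke to identify $\langle K_X, s''\rangle$ with $\langle K, s''\rangle$ concerns the diagonal action. Consequently the claimed reduction "to $\Gamma$-invariant $s''$" does not match the right-hand side $\sum_\gamma\langle K,\gamma^{-1}\cdot s'\rangle$, and the argument as written does not close. (Your final paragraph on convergence via properness and the support of $K$ being $S_e$ is fine, and your overall strategy — test against $\psi\otimes\xi\otimes s$, use Lemma \ref{lem q star u}, equivariance, and the cutoff identity — is the paper's; the missing ingredient is the $\Gamma\times\Gamma$ sum and its reparametrisation.)
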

\begin{proof}
Let $\psi \in C^{\infty}_c(\Rnz)$, $\xi \in \Gamma^{\infty}_c(E^*)$ and $s \in \Gamma^{\infty}_c(E)$. Then
 by Lemma \ref{lem q star u} (with $M$ replaced by $\Rnz \times M \times M$) and unpacking definitions, we find that
\beq{eq KX KM 1}
\begin{split}
\langle q_{\Rnz \times M \times M}^* K_X, \psi \otimes \xi \otimes s \rangle &=
\left\langle K_X, \sum_{\gamma, \gamma' \in \Gamma} \psi \otimes  \gamma \cdot \xi \otimes \gamma' \cdot s \right\rangle \\
&=  \int_X \int_{\R}\psi(t)
\sum_{\gamma, \gamma' \in \Gamma}
\bigl\langle \gamma \xi(\gamma^{-1}m),  \Phi_{-t}\gamma's(\gamma'^{-1}\varphi_t(m))  \bigr\rangle
dt\, d(\Gamma m)\\
&= 
\sum_{\gamma, \gamma' \in \Gamma}
 \int_M 
 \chi(m) \int_{\R}\psi(t)
\bigl\langle \gamma \xi(\gamma^{-1}m),  \Phi_{-t}\gamma's(\gamma'^{-1}\varphi_t(m))  \bigr\rangle
dt\, dm,
\end{split}
\eeq
for a 
 a cutoff function $\chi$.
Substituting $m' = \gamma^{-1}m$, $\gamma'' = \gamma^{-1}\gamma'$, and using $\Gamma$-invariance of the metric on $E$ and $\Gamma$-equivariance of $\Phi_t$ and $\varphi_t$, we find that the right hand side of  \eqref{eq KX KM 1} equals
\[
\begin{split}
\sum_{\gamma, \gamma'' \in \Gamma}
 \int_M   \chi(\gamma m')  
\langle  \xi(m'),  (\gamma' \Phi_{\psi}^* s)(m')\rangle &= 
\sum_{\gamma'' \in \Gamma}
 \int_M     
\langle  \xi(m'),  (\gamma' \Phi_{\psi}^* s)(m')\rangle
dm'\\
&= \sum_{\gamma'' \in \Gamma} \langle \gamma' \cdot K, \psi \otimes \xi \otimes s\rangle. 
%
%
\end{split}
\]
\end{proof}

\begin{proof}[Proof of Proposition \ref{prop KX}.]
Commutativity of the diagram 
\[
\xymatrix{
\Delta(M) \ar@{^{(}->}[r] \ar[d]^-{q_{\Delta(M)}} & M \times M  \ar[d]^-{q_{M \times M}}  \\ 
\Delta(X) \ar@{^{(}->}[r]& X \times X
}
\]
and Lemma \ref{lem KX KM}
imply that
\begin{align}
q_{\Rnz \times \Delta(M)}^*(K_X|_{\Rnz \times \Delta(X)}) &= (q_{\Rnz \times M \times M}^*K_X)|_{\Rnz \times\Delta(M)}\nonumber \\
&=  \sum_{\gamma \in \Gamma} (\gamma \cdot K)|_{\Rnz \times\Delta(M)}
\label{eq q star K X}.
\end{align}

Let  $\psi \in C^{\infty}_c(\Rnz)$. We view the $\Gamma$-invariant function $ \sum_{\gamma \in \Gamma} \gamma \cdot (p_2^*\chi)|_{\Delta(M)}$ on $\Delta(M)$ (which is constant $1$) as a function on $\Delta(X)$. Using this identification, and then Lemma \ref{lem q star u} and \eqref{eq q star K X}, we find that
\[
\begin{split}
\langle \Tr^{\flat}(\Phi_X), \psi \rangle  &=
\langle K_X|_{\Rnz \times \Delta(X)}, \psi \otimes 1_{\Delta(X)}\rangle \\
&= 
\Bigl\langle K_X|_{\Rnz \times \Delta(X)}, \psi \otimes \sum_{\gamma \in \Gamma} (\gamma \cdot (p_2^*\chi)|_{\Delta(M)})\Bigr\rangle \\
&= \Bigl\langle q^*(K_X|_{\Rnz \times \Delta(X)}), \psi \otimes  (p_2^*\chi)|_{\Delta(M)}\Bigr\rangle \\
&=
\sum_{\gamma \in \Gamma} 
\Bigl\langle \gamma \cdot K|_{\Rnz \times\Delta(M)}, \psi \otimes  (p_2^*\chi)|_{\Delta(M)}\Bigr\rangle\\
&= \sum_{(\gamma)}\langle \Tr^{\flat}_{\gamma}(\Phi), \psi \rangle.
\end{split}
\]
\end{proof}

%

\section{Proof of the equivariant trace formula}\label{sec fixed pt}


In this section, we prove the equivariant version of Guillemin's trace formula, Theorem \ref{prop fixed pt gen}.

\subsection{A local expression near individual fixed points}

Until stated otherwise in Subsection \ref{sec pf fixed pt}, 
we assume that $E = M \times \C$ with the trivial action on fibres. We take $A$ to be the identity.
Let the Schwartz kernel $K$ be as in \eqref{eq def x K}, for $x=e$. 
From now on, we will often use notation as if $K$ were a function.
%

We fix a $G$-invariant Riemannian metric on $M$, and use it to identify $T_mM/\R u(m) \cong u(m)^{\perp}$ for $m \in M$. When $x \in G$ and $l \in \Rnz$ are given, we also assume that
this Riemannian metric is preserved by $\varphi$ along flow curves in $\Gamma_l^x(\varphi)$.  
This is possible by Lemma \ref{lem Gammagl discrete} and the use of a suitable partition of unity. 
 Then for all such flow curves $\gamma$,  the map $T_{\gamma(0)} (\varphi_l \circ x^{-1})$ preserves 
 $u(\gamma(0))^{\perp} \subset T_{\gamma(0)}M$, and 
 $P^x_{\gamma}$ can be identified with 
\[
T_{\gamma(0)} (\varphi_l \circ x^{-1})|_{u(\gamma(0))^{\perp}}.
\]

We start with a generalisation of the local computation carried out in Lemma B.1 in \cite{DZ16}. For $p \in \R^n$ and $r>0$, we denote the open Euclidean ball in $\R^n$ with centre $p$ and radius $r$ by $B^n(p, r)$.
\begin{lemma}\label{lem fixed pt loc}
Let $m_0 \in M$, $x \in G$ and $l \in \Rnz$ be such that $\varphi_{l}(m_0) = xm_0$. 
 Suppose that $\varphi$ is $x$-nondegenerate. Then   
there exists a density $dm$ on $M$, an  $\varepsilon>0$, and a neighbourhood $U$ of $m_0$ such that $\varphi_s(m_0) \in U$ for all 
$s \in (-\varepsilon, \varepsilon)$, such that  for all 
$f \in C_c^{\infty}( (l-\varepsilon,  l+\varepsilon) \times U)$, 
\begin{multline}\label{eq fixed pt lem}
\int_{\R \times M} f(t,m) K(t, x^{-1}m,m)\, dt\, dm
= \\
\frac{1}{|\det(1-T_{m_0} (\varphi_{l} \circ x^{-1})|_{u(m_0)^{\perp}})   |}
\int_{-\varepsilon}^{\varepsilon} f(l, \varphi_s(m_0))\, ds.
\end{multline}
\end{lemma}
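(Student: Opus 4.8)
The plan is to compute the distributional pairing $\langle K, f\rangle$ on the left-hand side directly, where $K(t, x^{-1}m, m)$ is the Schwartz kernel of $\Phi^*_\psi$ pulled back along $\Psi_{x^{-1}}$, and $K$ is, by \eqref{eq WF K}, the $\delta$-distribution on the submanifold $S_{x^{-1}} = \{(t,m,x\varphi_t(m))\}$. Concretely, for $f \in C^\infty_c((l-\varepsilon,l+\varepsilon)\times U)$, the left-hand side equals $\int_{\R\times M} f(t,m)\, \delta_{x\varphi_t(m)}(m)\, dt\, dm$, i.e.\ the integral of $f$ over the fibered-fixed-point set $\{(t,m) : x\varphi_t(m) = m\}$ against the appropriate transverse Jacobian factor coming from the coarea/pushforward formula. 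The first step is therefore to set up local coordinates adapted to $m_0$: since $\varphi$ has no zeros, use a flow box so that $u$ is a coordinate vector field near $m_0$, writing $m = (s, y)$ with $s \in (-\varepsilon,\varepsilon)$ the flow direction and $y \in B^{n-1}(0,\varepsilon)$ transverse coordinates identified with $u(m_0)^\perp$. Shrink $U$ and $\varepsilon$ using $x$-nondegeneracy so that $(l, m_0)$ is the only solution of $x\varphi_t(m) = m$ in $(l-\varepsilon,l+\varepsilon)\times U$, which is possible by Lemma \ref{lem Gammagl discrete}.

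Next I would make the change of variables that straightens $S_{x^{-1}}$. The map $(t,m)\mapsto (t, m, x\varphi_t(m))$ parametrizes $S_{x^{-1}}$; near $(l,m_0)$ the condition $x\varphi_t(m)=m$ becomes, in flow-box coordinates, that the flow direction imposes no constraint (it just shifts $s$, consistent with the right-hand side being an integral $\int_{-\varepsilon}^\varepsilon f(l,\varphi_s(m_0))\,ds$) while the transverse part requires $y$ to be a fixed point of the map induced by $T_{m_0}(\varphi_l\circ x^{-1})$ on $u(m_0)^\perp$. The key linear-algebra input is that $1 - T_{m_0}(\varphi_l\circ x^{-1})|_{u(m_0)^\perp}$ is invertible (this is exactly the $x$-nondegeneracy hypothesis), so by the implicit function theorem the transverse fixed-point equation has the single solution $y=0$ locally, and the Jacobian of the map $y \mapsto y - (\text{transverse part of } x\varphi_t(m))$ at the solution is $\det(1 - T_{m_0}(\varphi_l\circ x^{-1})|_{u(m_0)^\perp})$. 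Choosing the density $dm$ to be the coordinate Lebesgue density $ds\, dy$ in this flow box (this is the freedom the statement allows — we get to pick $dm$), the pushforward formula for the $\delta$-distribution gives exactly the factor $|\det(1-T_{m_0}(\varphi_l\circ x^{-1})|_{u(m_0)^\perp})|^{-1}$ in front, and the remaining integral over the flow direction $s$ is $\int_{-\varepsilon}^\varepsilon f(l,\varphi_s(m_0))\,ds$.

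To make this rigorous rather than formal, I would replace the $\delta$-distribution manipulation with an explicit oscillatory-integral or regularization argument, following Lemma B.1 of \cite{DZ16}: write $\psi_\delta \to \delta_l$ an approximation, compute $\int f(t,m) K(t,x^{-1}m,m)$ with $K$ realized via \eqref{eq def Phi psi} and \eqref{eq def x K}, substitute the flow-box coordinates, and pass to the limit using the stationary-phase / implicit-function structure above; the $x$-nondegeneracy guarantees the phase has a nondegenerate critical manifold (the flow line through $m_0$) with transverse Hessian governed by $1 - P^x_\gamma$. The main obstacle I expect is bookkeeping: correctly tracking how the chosen density $dm$, the flow-box coordinates, and the identification $T_{m_0}M/\R u(m_0) \cong u(m_0)^\perp$ interact, and checking that the Jacobian determinant that appears is genuinely $\det(1 - T_{m_0}(\varphi_l\circ x^{-1})|_{u(m_0)^\perp})$ and not some metric-dependent variant — but since the final answer only involves $|\det(\cdots)|$ and the left-hand side is manifestly density-dependent in a compensating way (cf.\ Remark \ref{rem flat g trace indep metric}), this should come out cleanly once coordinates are fixed. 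Everything else is a routine transversality computation combined with the standard description of the wave front set and restriction of a $\delta$-section, already invoked in the proof of Proposition \ref{prop restr K}.
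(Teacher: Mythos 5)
Your proposal matches the paper's proof in all essentials: both pass to a flow-box chart in which $u$ is a coordinate vector field, take $dm$ to be the coordinate Lebesgue density, write the kernel as a $\delta$-distribution on $S_{x^{-1}}$, and extract the factor $|\det(1-T_{m_0}(\varphi_l\circ x^{-1})|_{u(m_0)^\perp})|^{-1}$ from the pushforward of $\delta_{\R^{n-1}}$ under the transverse map $1-F'$, whose invertible differential at the origin is exactly the $x$-nondegeneracy hypothesis. The extra regularization/stationary-phase step you propose is not needed, since in these coordinates the kernel is an explicit product of Dirac deltas whose arguments are submersions, so the computation can be carried out directly as the paper does.
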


\begin{proof}
%
Choose a neighbourhood $V$ of $m_0$, an $r>0$, and a chart $\kappa \colon V \to B^n(0, r)$ such that
\[
\begin{split}
\kappa(m_0)&= 0;\\
\kappa_* u &= \frac{\partial}{\partial x_1};\\
T_{m_0}\kappa(u(m_0)^{\perp}) &= \{0\} \times \R^{n-1}.
\end{split}
\]
Fix $\varepsilon \in (0,r)$ such that the neighbourhood $U:= \kappa^{-1}(B^n(0,\varepsilon))$ has the property that for all $t \in (l - 2\varepsilon, l + 2\varepsilon)$,
\[
x^{-1} \circ \varphi_{l} (U) \subset V.
\]
Define the map
\[
F\colon B^{n-1}(0,\varepsilon) \to B^n(0,r)
\]
by commutativity of the diagram
\[
\xymatrix{
\kappa^{-1}(\{0\}\times B^{n-1}(0,\varepsilon)) \ar[r]^-{\varphi_{l} \circ x^{-1}} \ar[d]_-{\kappa} & V \ar[d]_-{\kappa}\\
\{0\}\times B^{n-1}(0,\varepsilon) \ar[r]^-{F} & B^n(0,r).
}
\]
Let $F_1\colon B^n(0,\varepsilon) \to(-r,r)$ be the first component of $F$, and $F'\colon B^n(0,\varepsilon) \to B^{n-1}(0,r)$ the last $n-1$ components.

Let $t \in (l - \varepsilon, l+\varepsilon)$. For all $x_1 \in \R$ and $x' \in \R^{n-1}$ such that $(x_1, x') \in B^n(0,\varepsilon)$, the facts that the flow $\varphi$ commutes with $x$ and that $|x_1|$ and $|l-t+x_1|$ are smaller than $2\varepsilon$ imply that
\[
\begin{split}
(\varphi_{t} \circ x^{-1})\kappa^{-1}(x_1, x') &= \varphi_{t-l} \circ (\varphi_{l} \circ x^{-1}) \circ \varphi_{x_1}\circ  \kappa^{-1}(0,x')\\
&= \varphi_{t-l+x_1}  \kappa^{-1}(F(x'))\\
&= \kappa^{-1}(t-l+x_1 + F_1(x'), F'(x')).
\end{split}
\]
Choose a density $dm$ on $M$ such that $\kappa_* (dm)$ is the Lebesgue measure on $V$. Then the above computation implies that 
the Schwartz kernel $K^{x}_{t}$ of the pullback along $\varphi_{t} \circ x^{-1}$ is locally given by
\[
K^{x}_{t}(\kappa^{-1}(x_1, x'), \kappa^{-1}(y_1, y')) = 
\delta_{\R^{n-1}}(y'-F'(x'))\delta_{\R}(y_1 + l-t-x_1 - F_1(x')).
\]
Note that 
\[
K(t,x^{-1}m,m') = K_{t}^{x}(m,m'). 
\]
So for $f \in C_c( (l-\varepsilon,  l+\varepsilon) \times U)$,
\begin{multline}\label{eq fixed pt lem 1}
\int_{\R \times M} f(t,m) K(t, x^{-1}m,m)\, dt\, dm
= \\
\int_{\R \times B^n(0,\varepsilon)} f(t,\kappa^{-1}(x_1, x')) 
\delta_{\R^{n-1}}(x'-F'(x'))\delta_{\R}( l-t - F_1(x'))
\, dt\, dx_1\, dx'=\\
\int_{B^n(0,\varepsilon)} f(l - F_1(x'),\kappa^{-1}(x_1, x')) 
((1-F')^* \delta_{\R^{n-1}})(x')
\, dx_1\, dx'.
\end{multline}

We have a commuting diagram
\[
\xymatrix{
u(m_0)^{\perp} \ar[rr]^-{T_{m_0}(\varphi_{l} \circ x^{-1})} \ar[d]_-{T_{m_0}\kappa} & & u(m_0)^{\perp}  \ar[d]_-{T_{m_0}\kappa}\\
\R^{n-1} \ar[rr]^-{T_0F'} && \R^{n-1}.
}
\]
So
\[
\det(1-T_{m_0}(\varphi_{l} \circ x^{-1})|_{u(m_0)^{\perp}})= \det(1-T_0 F'),
\]
and in particular the right hand side is nonzero. This implies that $\det(1-T_{x'} F')\not=0$ for all $x' \in B^{n-1}(0, \varepsilon)$, if
 we choose $\varepsilon$ small enough. Then, again for small enough $\varepsilon$, the only point $x' \in B^{n-1}(0, \varepsilon)$ such that $(1-F')(x') = 0$ is the origin. For such an $\varepsilon$, we find that the right hand side of \eqref{eq fixed pt lem 1} equals
 \[
\frac{1}{|\det(1-T_{m_0} (\varphi_{l} \circ x^{-1})|_{u(m_0)^{\perp}})  |}
\int_{-\varepsilon}^{\varepsilon}
 f(l,\kappa^{-1}(x_1, 0)) 
\, dx_1, 
 \]
 which equals the right hand side of \eqref{eq fixed pt lem}.
\end{proof}

\subsection{Globalising Lemma \ref{lem fixed pt loc}}

In this subsection we will establish a 
global version of Lemma \ref{lem fixed pt loc}. 
Recall that we write $I_{\gamma}$ for an interval on which a curve $\gamma\colon \R \to M$ is bijective onto its image, up to sets of measure zero.
\begin{lemma}\label{lem fixed pt glob}
Let $x \in G$. Suppose that $\varphi$ is $x$-nondegenerate.  Suppose that $L_{x}(\varphi)$ is discrete. 
Then for all $f \in C^{\infty}_c(\Rnz \times M)$,
\begin{multline}\label{eq fixed pt lem glob}
\int_{\R \times M} f(t,m) K(t, x^{-1}m,m)\, dt\, dm
= \\
\sum_{l \in L_{x}(\varphi)} \sum_{\gamma \in \Gamma^{x}_l(\varphi)}
\frac{1}{|\det(1-P_{\gamma}^{x})  |}
\int_{I_{\gamma}} f(l, \gamma(s))\, ds.
\end{multline}
\end{lemma}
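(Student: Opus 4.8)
The plan is to deduce Lemma \ref{lem fixed pt glob} from the local statement Lemma \ref{lem fixed pt loc} by a partition-of-unity argument, carefully reorganising the sum that arises. First I would record the structural facts about the fixed-point set: since $\varphi$ is $x$-nondegenerate and $L_x(\varphi)$ is discrete, Lemma \ref{lem Gammagl discrete} gives, for each $l \in L_x(\varphi)$, that $\Gamma_l^x(\varphi)$ is countable and that the images of its curves are exactly the connected components of the closed one-dimensional submanifold $M^{x^{-1}\circ\varphi_l}$. Hence the ``support'' of the distribution $(t,m)\mapsto K(t,x^{-1}m,m)$, which by \eqref{eq WF K} / Example 8.2.5 in \cite{Hormander1} is carried on $S_{x^{-1}}\cap(\Rnz\times\Delta(M))$, is the disjoint union over $l\in L_x(\varphi)$ and $\gamma\in\Gamma_l^x(\varphi)$ of the sets $\{l\}\times\operatorname{graph}$ of $\gamma$ restricted to $I_\gamma$; this decomposition is locally finite because both $L_x(\varphi)$ and $\Gamma_l^x(\varphi)$ are discrete and $f$ has compact support.

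Next I would localise. Given $f\in C^\infty_c(\Rnz\times M)$, its support meets only finitely many of the sets $\{l\}\times\operatorname{im}(\gamma)$ above (using discreteness of $L_x(\varphi)$, properness of the action, and compactness of $\operatorname{supp} f$). Around each relevant pair $(l,\gamma)$ pick a point $m_0=\gamma(0)$ and apply Lemma \ref{lem fixed pt loc} to get $\varepsilon>0$ and a neighbourhood $U$ of $m_0$ with $\varphi_s(m_0)\in U$ for $|s|<\varepsilon$; shrinking $\varepsilon$ I may assume the product neighbourhoods $(l-\varepsilon,l+\varepsilon)\times U$, as $(l,\gamma)$ ranges over the finite relevant set, together with the open set $W:=(\Rnz\times M)\setminus\bigcup_{l,\gamma}(\{l\}\times\operatorname{im}\gamma)$, form an open cover of a neighbourhood of $\operatorname{supp} f$. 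Choose a subordinate partition of unity $\{\rho_{l,\gamma}\}\cup\{\rho_W\}$ and write $f=\sum \rho_{l,\gamma}f + \rho_W f$. For each $(l,\gamma)$, the local lemma applied to $\rho_{l,\gamma}f$ yields
\[
\int_{\R\times M}(\rho_{l,\gamma}f)(t,m)K(t,x^{-1}m,m)\,dt\,dm
=\frac{1}{|\det(1-P_\gamma^x)|}\int_{-\varepsilon}^{\varepsilon}(\rho_{l,\gamma}f)(l,\varphi_s(m_0))\,ds,
\]
using that $T_{m_0}(\varphi_l\circ x^{-1})|_{u(m_0)^\perp}$ represents $P_\gamma^x$. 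For the piece $\rho_W f$, the support avoids $S_{x^{-1}}\cap(\Rnz\times\Delta(M))=\operatorname{supp}\bigl(K(t,x^{-1}m,m)\bigr)$, so that term contributes zero.

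Then I would reassemble. Summing over the finitely many $(l,\gamma)$ and using $\sum_{l,\gamma}\rho_{l,\gamma}+\rho_W\equiv1$ on $\operatorname{supp} f$, the left side of \eqref{eq fixed pt lem glob} equals $\sum_{l,\gamma}\frac{1}{|\det(1-P_\gamma^x)|}\int_{-\varepsilon_{l,\gamma}}^{\varepsilon_{l,\gamma}}(\rho_{l,\gamma}f)(l,\varphi_s(\gamma(0)))\,ds$. Two cleanups remain. First, I should replace the local integral $\int_{-\varepsilon}^{\varepsilon}(\rho_{l,\gamma}f)(l,\varphi_s(\gamma(0)))\,ds$, summed over the partition, by $\int_{I_\gamma}f(l,\gamma(s))\,ds$: this follows because along the flow curve $\gamma$ the collection $\{\rho_{l,\gamma}\}$ restricted to $\operatorname{im}\gamma$ (parametrised by $I_\gamma$, which is $\R$ or a period interval) sums to $1$ wherever $f(l,\cdot)$ is supported, and the reparametrisations $s\mapsto\varphi_s(\gamma(0))=\gamma(s)$ match up — one uses here that $\gamma|_{I_\gamma}$ is a bijection onto its image up to measure zero. (A technical point: different $(l,\gamma)$ with the same image but coming from genuinely distinct curves do not occur, since $I_\gamma$ was chosen so that $\gamma|_{I_\gamma}$ parametrises the whole component; if the component is a circle one simply gets the primitive period appearing.) Second, the ostensible dependence on the auxiliary Riemannian metric and on the density $dm$ drops out: $\det(1-P_\gamma^x)$ is metric-independent by construction of $P_\gamma^x$ (Remark \ref{rem indep gamma} handles the representative), and a change of density multiplies both sides of \eqref{eq fixed pt lem} by the same Jacobian. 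The main obstacle I anticipate is the bookkeeping in this last reassembly step — making sure the finitely many local cutoffs stitch together to reproduce exactly $\int_{I_\gamma}f(l,\gamma(s))\,ds$ with the correct single factor $1/|\det(1-P_\gamma^x)|$ per class $\gamma\in\Gamma_l^x(\varphi)$, rather than overcounting components or mis-handling the circle-versus-line dichotomy for $I_\gamma$; everything else is a routine globalisation of a local delta-function computation.
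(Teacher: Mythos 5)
Your overall strategy --- reduce to Lemma \ref{lem fixed pt loc} via the support decomposition of $K(t,x^{-1}m,m)$ over pairs $(l,\gamma)$ and a partition of unity --- is the same as the paper's, but there is a genuine gap in your localisation step. You apply Lemma \ref{lem fixed pt loc} \emph{once} per curve, at the single point $m_0=\gamma(0)$, obtaining one neighbourhood $U$ that contains only the short flow segment $\varphi_s(m_0)$ for $|s|<\varepsilon$. The image of $\gamma$ is an entire connected component of the closed one-dimensional submanifold $M^{x^{-1}\circ\varphi_l}$: either a circle of macroscopic length or a properly embedded copy of $\R$ (and in the non-periodic case $I_\gamma=\R$, so $\im(\gamma)$ is not contained in any chart at all). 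Hence the sets $(l-\varepsilon,l+\varepsilon)\times U$ together with $W=(\Rnz\times M)\setminus\bigcup(\{l\}\times\im\gamma)$ do \emph{not} cover a neighbourhood of $\supp f$ --- points $(l,\gamma(s))$ with $\gamma(s)\notin U$ lie in neither --- and shrinking $\varepsilon$ only makes this worse. Consequently, in your reassembly step the ``collection $\{\rho_{l,\gamma}\}$ restricted to $\im\gamma$'' that is supposed to sum to $1$ along the curve consists of a single function supported near $\gamma(0)$, and the single integral $\int_{-\varepsilon}^{\varepsilon}(\rho_{l,\gamma}f)(l,\varphi_s(m_0))\,ds$ cannot recover $\int_{I_\gamma}f(l,\gamma(s))\,ds$.

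The missing ingredient is precisely the bookkeeping you flagged as ``the main obstacle'': one must cover $\gamma([0,l])$ by finitely many neighbourhoods $U_1,\dots,U_k$ centred at $\gamma(t_1),\dots,\gamma(t_k)$ (each point of the curve is fixed by $x^{-1}\circ\varphi_l$, so Lemma \ref{lem fixed pt loc} applies at each), then propagate these along the whole curve using $\gamma(t+nl)=x^{\pm n}\gamma(t)$ to obtain a locally finite countable cover $\{U_{n,j}\}$ of $\im(\gamma)$ that avoids all other curves in $\Gamma^x_l(\varphi)$, and choose cutoffs $\zeta_{n,j}$ whose sum along the curve approximates $1_{I_\gamma}$ (selecting a finite or countable index set $J$ according to whether $I_\gamma$ is bounded or all of $\R$). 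One then needs the additional observation that $\det\bigl(1-T_{\gamma(t_\alpha)}(\varphi_l\circ x^{-1})|_{u(\gamma(t_\alpha))^\perp}\bigr)=\det(1-P^x_\gamma)$ for every $\alpha$, by conjugating with $T\varphi_{t_\alpha}$, so that the many local contributions carry one and the same prefactor and assemble into $|\det(1-P^x_\gamma)|^{-1}\int_{I_\gamma}f(l,\gamma(s))\,ds$. Your remarks on metric/density independence and on the circle-versus-line dichotomy for $I_\gamma$ are correct, but without the multi-chart cover along each curve the argument does not go through.
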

\begin{proof}
%
Let $\Delta_{x^{-1}}(m) = (x^{-1}m,m)$, for $m \in M$. 
The support of $K|_{\Rnz \times \Delta_{x^{-1}}(M)}$ is 
\[
\{(t,x^{-1}m,m); m \in M, \varphi_t(m) = xm\} = 
 \bigcup_{l \in L_x(\varphi)} \bigcup_{\gamma \in \Gamma_l^x(\varphi)} \{l\} \times \Delta_{x^{-1}}(\im(\gamma)).
\]
The unions are disjoint. 
The set $L_x(\varphi)$ is discrete by assumption, and for each $l \in L_x(\varphi)$, the set $\Gamma_l^x(\varphi)$ is discrete by Lemma \ref{lem Gammagl discrete}. Via a partition of unity, the left hand side of \eqref{eq fixed pt lem glob} can be written as a sum of contributions from $l \in L_x(\varphi)$ and $\gamma \in \Gamma_l^x(\varphi)$. We will use Lemma \ref{lem fixed pt loc}  to show that such a contribution equals the corresponding term on the right hand side of \eqref{eq fixed pt lem glob}.

Fix $l \in L_{x}(\varphi)$ and $\gamma \in \Gamma_l^x(\varphi)$.
The set $\gamma([0,l])$ is compact, so there are finitely many $t_1, \ldots, t_k \in (0,l)$, $\varepsilon_1, \ldots, \varepsilon_k>0$ and neighbourhoods $U_1, \ldots, U_k$ of $\gamma(t_1), \ldots, \gamma(t_k)$, respectively, 
as in Lemma \ref{lem fixed pt loc} applied with $m_0 = \gamma(t_1), \ldots, \gamma(t_k)$, such that  
\[
\gamma([0,l]) \subset \bigcup_{j=1}^k U_j.
\]
In fact, to simplify notation, we choose all the numbers $\varepsilon_j$ to be equal to the same number $\varepsilon$. We also choose the number $\varepsilon$ small enough so that $(l-\varepsilon, l+\varepsilon) \cap L_{x}(\varphi) = \{l\}$. 

Since $\Gamma_l^x(\varphi)$ is discrete, we can
choose the sets $U_j$ so that 
\[
\bigcup_{j=1}^k U_j \cap \bigcup_{\tilde \gamma \in \Gamma_l^x(\varphi)} \im(\tilde \gamma) \subset \im(\gamma),
\]
i.e.\ $\bigcup_{j=1}^k U_j$ does not intersect any curves in $\Gamma_l^x(\varphi)$ apart from $\gamma$.
For $n \in \Z$ and $j=1,\ldots, k$, 
set
\[
\begin{split}
t_{n,j}&:= t_j + nl \\
U_{n,j} &:= x^{-n}U_j.
\end{split}
\]
The fact that $\gamma([nl, (n+1)l]) = x^{-n}\gamma([0,l])$ then implies that
\beq{eq union Unj}
\bigcup_{n \in \Z}\bigcup_{j=1}^k U_{n,j} \cap \bigcup_{\tilde \gamma \in \Gamma_l^x(\varphi)} \im(\tilde \gamma) 
= \im(\gamma).
\eeq
And the number $\varepsilon >0$ and the neighbourhoods $U_{n,j}$ of $\gamma(t_{n,j})$ are as in Lemma \ref{lem fixed pt loc}.

We choose a sub-collection of the sets $U_{n,j}$ that cover $\im(\gamma)$ in a locally finite way. The interval $I_{\gamma}$ is either all of $\R$ (if $\gamma$ is  not periodic) or bounded (if $\gamma$ is periodic). If $I_{\gamma} = \R$, then we use all sets $U_{n,j}$, and we set
\[
J:= \Z \times \{1,\ldots, k\}.
\]
If $I_{\gamma}$ is bounded, we can choose it to be of the form $[0,T]$ for some $T>0$. Then let $n_1, n_2 \in \Z$ be nonnegative such that $n_2 \leq k$,  $t_{n_1, n_2} \geq T$ and $t_{n_1, n_2-1} < T$ (if $n_2\geq 1$) or $t_{n_1-1, k} < T$ (if $n_2=1$). Then
\[
I_{\gamma} \subset [t_{-1,k}, t_{n_1, n_2}].
\]
We now set
\[
J := \{ (-1,k) \}  \cup \bigl(\{0,\ldots, n_1-1\} \times \{1,\ldots, k\} \bigr) \cup \bigl( \{n_1\} \times \{1,\ldots, n_2\} \bigr).
\]
The middle component is omitted if $n_1 = 0$, which happens if $T<l$. Then the set $\{t_\alpha; \alpha \in J\}$ consists of consecutive elements of the set of all $t_{n,j}$, and
\[
\begin{split}
I_{\gamma} \subset \bigl[\min_{\alpha \in J} t_{\alpha},   \max_{\alpha \in J} t_{\alpha} \bigr];
\\
\bigcup_{\alpha \in J} U_{\alpha} \cap \bigcup_{\tilde \gamma \in \Gamma_l^x(\varphi)} \im(\tilde \gamma) 
&= \im(\gamma).
\end{split}
\]


Let $\zeta_j \in C^{\infty}_c(U_j)$ and set $\zeta_{n,j}:= (x^{-n})^{*}\zeta_j$. By construction of the set $J$ in both the cases $I_{\gamma} = \R$ and $I_{\gamma}$ bounded, we can choose these functions so that
\[
\zeta := \sum_{\alpha \in J} \zeta_{n,j}
\]
equals $1$ on the image of $\gamma$, and zero on the images of all other curves in $\Gamma_l^x(\varphi)$.
%
%
Let $\psi \in C_c^{\infty}(l-\varepsilon, l+\varepsilon)$ be such that $\psi(l)=1$. Then Lemma \ref{lem fixed pt loc} implies that for all $f \in C^{\infty}_c(\Rnz \times M)$,
\begin{multline}\label{eq fixed pt contr gamma}
\int_{\R \times M} f(t,m) \psi(t)\zeta(m) K(t, x^{-1}m,m)\, dt\, dm
= \\
\sum_{\alpha \in J}
\frac{1}{|\det(1-T_{\gamma(t_{\alpha})} (\varphi_{l} \circ x^{-1})|_{u(\gamma(t_{\alpha}))^{\perp}})  |}
\int_{-\varepsilon}^{\varepsilon} f(l, \varphi_s(\gamma(t_{\alpha})))   \zeta_{\alpha}(\varphi_s(\gamma(t_{\alpha})))\, ds.
\end{multline}

Now for all $n$ and $j$
\[
T_{\gamma(t_{\alpha})} (\varphi_{l} \circ x^{-1})|_{u(\gamma(t_{\alpha}))^{\perp}} = 
T_{\gamma(0)} \varphi_{t_{\alpha}} \circ
P^{x}_{\gamma} \circ
T_{\gamma(t_{\alpha})} \varphi_{-t_{\alpha}}, 
\]
so
\[
\det(1-T_{\gamma(t_{\alpha})} (\varphi_{l} \circ x^{-1})|_{u(\gamma(t_{\alpha}))^{\perp}}) = 
\det(1-P^{x}_{\gamma}).
\]

For any interval $I\subset \R$, we write $1_I$ for the function on $\R$ that is $1$ on $I$ and $0$ outside $I$.
The expression
\[
\sum_{\alpha \in J} 
\int_{-\varepsilon}^{\varepsilon} f(l, \varphi_s(\gamma(t_{\alpha})))   \zeta_{\alpha}(\varphi_s(\gamma(t_{\alpha})))\, ds 
=
\int_{\R}  f(l, \gamma(s))   \sum_{\alpha \in J}  1_{[t_{\alpha} - \varepsilon, t_{\alpha} + \varepsilon]} (s) \zeta_{\alpha}(\gamma(s))\, ds 
\]
approaches the number $\int_{I_{\gamma}}f(l, \gamma(s))  \, ds$ arbitrarily closely if the smooth functions $\zeta_j$ are chosen so that the function 
\[
s\mapsto \sum_{\alpha \in J} 1_{[t_{\alpha} - \varepsilon, t_{\alpha} + \varepsilon]}(s) \zeta_{\alpha}(\gamma(s))
\]
approaches $1_{I_{\gamma}}$ closely enough in $L^1$-norm. The left hand side of \eqref{eq fixed pt contr gamma} is independent of the functions $\zeta$ and $\psi$ with the properties mentioned, and therefore equals
\[
\frac{1 }{|\det(1-P^{x}_{\gamma})|} \int_{I_{\gamma}}f(l, \gamma(s))  \, ds,
\]
which completes the proof.
\end{proof}

\subsection{Proof of Theorem \ref{prop fixed pt gen}}\label{sec pf fixed pt}

We will start by proving Theorem \ref{prop fixed pt gen} in the case where $E$ is the trivial line bundle.
\begin{proposition}\label{prop fixed pt Trb}
Suppose that $E$ is the trivial line bundle. 
Suppose that $\varphi$ is $g$-nondegenerate. 
Then the operator $\Phi^*_{\psi}$ is flat $g$-trace class for all $\psi \in C^{\infty}_c(\Rnz)$.
Furthermore,
\beq{eq fixed pt Trb}
\Tr^{\flat}_g(\Phi^*) = \int_{G/Z}  \sum_{l \in L_{g}(\varphi)} \sum_{\gamma \in \Gamma^{hgh^{-1}}_l(\varphi)} 
\frac{\delta_{l}}{|\det(1-P_{\gamma}^{hgh^{-1}})  |}T_{\gamma} \, d(hZ).
\eeq
\end{proposition}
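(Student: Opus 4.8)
The plan is to reduce the statement to the global fixed-point formula of Lemma~\ref{lem fixed pt glob}, using the characterisation of the flat $g$-trace for the trivial line bundle given in Lemma~\ref{lem flat trace trivial}. First I would verify flat $g$-trace class: by Proposition~\ref{prop restr K}, $g$-nondegeneracy gives $\WF(hgh^{-1}\cdot K) \cap \cN^*(\Rnz \times \Delta(M)) = \emptyset$ for all $h$, so the restriction $K|_{\Rnz \times \Delta_{hg^{-1}h^{-1}}(M)}$ is well-defined, and it remains to check that the integral over $G/Z$ in \eqref{eq def flat g trace 1} converges absolutely and defines a distribution on $\Rnz$. This convergence will be controlled by the explicit right-hand side of the formula we are proving, once established pointwise in $hZ$, together with properness of the action and the compact support of $\chi$ and $\psi$.

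Next I would compute, for fixed $h \in G$ and $\psi \in C^{\infty}_c(\Rnz)$, the inner pairing
\[
\bigl\langle K|_{\Rnz \times \Delta_{hg^{-1}h^{-1}}(M)}, \psi \otimes (p_2^*\chi)|_{\Delta_{hg^{-1}h^{-1}}(M)}\bigr\rangle.
\]
Writing this pairing in the "function" notation as $\int_{\R \times M} \psi(t)\chi(m) K(t, hg^{-1}h^{-1}m, m)\, dt\, dm$, I apply Lemma~\ref{lem fixed pt glob} with $x = hgh^{-1}$ and test function $f(t,m) = \psi(t)\chi(m)$; note $L_{hgh^{-1}}(\varphi)$ is discrete by $g$-nondegeneracy (it is contained in, in fact equals via \eqref{eq Lhghinv}, the discrete set arising from Lemma~\ref{lem Gammagl discrete}, and one checks discreteness of the length spectrum from $g$-nondegeneracy as needed). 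This yields
\[
\sum_{l \in L_{hgh^{-1}}(\varphi)} \sum_{\gamma \in \Gamma^{hgh^{-1}}_l(\varphi)} \frac{1}{|\det(1 - P^{hgh^{-1}}_\gamma)|} \int_{I_\gamma} \psi(l)\chi(\gamma(s))\, ds = \sum_{l \in L_{hgh^{-1}}(\varphi)} \sum_{\gamma \in \Gamma^{hgh^{-1}}_l(\varphi)} \frac{\psi(l)\, T_\gamma}{|\det(1 - P^{hgh^{-1}}_\gamma)|},
\]
using the definition \eqref{eq def Tgamma} of $T_\gamma$ (convergent by Lemma~\ref{lem conv Tgamma}(a)) and $L_{hgh^{-1}}(\varphi) = L_g(\varphi)$ from \eqref{eq Lhghinv}. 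Pairing against $\psi$, this is exactly the statement that the inner integrand equals $\sum_{l \in L_g(\varphi)} \sum_{\gamma \in \Gamma^{hgh^{-1}}_l(\varphi)} \frac{T_\gamma\, \delta_l}{|\det(1-P^{hgh^{-1}}_\gamma)|}$ paired with $\psi$.

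Finally I would integrate over $G/Z$. Lemma~\ref{lem flat trace trivial} identifies $\langle \Tr^{\flat}_g(\Phi^*), \psi\rangle$ with the $G/Z$-integral of the pairing just computed, so substituting gives \eqref{eq fixed pt Trb}. To justify that this defines a distribution on $\Rnz$ (and that the $G/Z$-integral converges absolutely), I would fix a relatively compact open $W \subset \Rnz$, restrict $\psi$ to have support in $W$, and observe that for each $l \in L_g(\varphi) \cap W$ the set of $\gamma$ contributing, together with the $h$-translates, is governed by $M^{g^{-1}\varphi_l}$ via Lemma~\ref{lem Gammagl discrete} and the $G$-equivariance relation \eqref{eq Gamma hghinv}; properness and cocompactness of the $G$-action, plus the compact support of $\chi$ in the definition of $T_\gamma$, bound the sum over $h Z$ and show only finitely many $l$ contribute in $W$, giving a locally finite sum of distributions of order zero.

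I expect the main obstacle to be the bookkeeping in the last step: making precise that the double sum over $(l,\gamma)$ integrated over $G/Z$ is locally finite in $\Rnz$ and absolutely convergent, i.e.\ that for $\psi$ supported in a compact subset of $\Rnz$ only finitely many pairs $(l, \Gamma^{hgh^{-1}}_l(\varphi)\text{-orbit})$ contribute after quotienting by the $Z$-action on $h$. This is where properness/cocompactness and the finiteness built into the $\chi$-primitive period $T_\gamma$ (Lemma~\ref{lem conv Tgamma}) must be combined carefully; the local computation itself is already done in Lemmas~\ref{lem fixed pt loc} and~\ref{lem fixed pt glob}, and the passage between the distributional pairing and the "function" notation is routine given Lemma~\ref{lem flat trace trivial}.
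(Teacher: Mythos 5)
Your proposal is correct and follows essentially the same route as the paper: the paper's proof is a one-line application of Lemma \ref{lem flat trace trivial} and Lemma \ref{lem fixed pt glob} with $x = hgh^{-1}$ and test function $f(t,m) = \psi(t)\chi(m)$, together with \eqref{eq Lhghinv}, exactly as you describe. Your additional care about discreteness of $L_{hgh^{-1}}(\varphi)$ (a hypothesis of Lemma \ref{lem fixed pt glob}) and about absolute convergence of the $G/Z$-integral goes beyond what the paper writes down, but does not change the method.
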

\begin{proof}
By Lemmas \ref{lem flat trace trivial} and \ref{lem fixed pt glob}, we have for all $\psi \in C^{\infty}_c(\Rnz)$, 
\[
\langle
\Tr^{\flat}_g(\Phi^*), \psi\rangle = \int_{G/Z}  \sum_{l \in L_{g}(\varphi)} \sum_{\gamma \in \Gamma^{hgh^{-1}}_l(\varphi)} 
\frac{1}{|\det(1-P_{\gamma}^{hgh^{-1}})  |} \psi(l) \int_{I_{\gamma}} \chi(\gamma(s))\, ds\, d(hZ).
\]
Here we have also used \eqref{eq Lhghinv}. 
\end{proof}

\begin{remark}\label{rem G=e}
Suppose that $G = \{e\}$, so $\chi \equiv 1$. 
Then by Example \ref{ex prim per e},  the equality \eqref{eq fixed pt Trb} in Proposition \ref{prop fixed pt Trb} becomes
\[
\Tr^{\flat}_e(\Phi^*) = \sum_{l \in L_{e}(\varphi)} \sum_{\gamma \in \Gamma^{e}_l(\varphi)} 
\frac{\delta_{l}}{|\det(1-P_{\gamma}^{e})  |} T_{\gamma}^{\#}.
\]
This is Guillemin's trace formula 
(II.17) in \cite{Guillemin77}; see also (1.5) in \cite{DZ16}.
\end{remark}

Proposition \ref{prop fixed pt Trb} is the special case of Theorem \ref{prop fixed pt gen} when $E$ is the trivial line bundle. We now show how this special case implies 
Theorem \ref{prop fixed pt gen} for general vector bundles $E$ (compare also the end of Appendix B in \cite{DZ16}). Fix $x \in G$. 
Let $A \in \End(E)$, and suppose $A$ commutes with both the $G$-action and with $\Phi$. 
Consider the function $ \tr(x\cdot A\Phi) \in C^{\infty}(\Rnz \times \Delta(M))$, given in terms of a local frame $\{e_j\}_{j=1}^{\rank(E)}$ for $E$, with dual frame $\{e^j\}_{j=1}^{\rank(E)}$, by
\beq{eq tr x Phi}
 \tr(x\cdot A\Phi)(t,m,m) := \sum_{j=1}^{\rank(E)} \left\langle e^j(m),( (x\cdot A\Phi_{-t}) e_j)(m) \right\rangle
\eeq
for $t \in \Rnz$ and $m \in M$.
Let $x\cdot K^0$ be the Schwartz kernel of  the operator given by pulling back scalar functions along $x^{-1} \circ \varphi_t$,  
as in \eqref{eq def x K} with $E = M \times \C$.
\begin{lemma}\label{lem red scalar}
Suppose that $\varphi$ is $g$-nondegenerate. 
We have the following equality in $\calD'(\Rnz \times \Delta(M))$:
\beq{eq red scalar}
\tr\left( x\cdot A K|_{\Rnz \times \Delta(M)} \right) = \tr(x\cdot A\Phi)\left( x\cdot K^0|_{\Rnz \times \Delta(M)} \right).
\eeq
\end{lemma}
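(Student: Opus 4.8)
The plan is to prove \eqref{eq red scalar} by a local computation, exploiting the fact that a fibre-wise linear lift $\Phi$ factors the Schwartz kernel of $A\Phi^*_\psi$ into a smooth bundle-endomorphism part times the scalar kernel $x\cdot K^0$. First I would observe that restriction to $\Rnz\times\Delta(M)$ is a well-defined, local operation for both $x\cdot AK$ and $x\cdot K^0$: the wave front condition is the content of Proposition \ref{prop restr K} (applied once to $A\Phi^*_\psi$ and once, with $A=\id$ and $E=M\times\C$, to $K^0$), using $x$-nondegeneracy of $\varphi$ — which holds for $x$ conjugate to $g$ by the argument in the proof of that proposition (via Lemma \ref{lem NS NDelta}), and this is the case in the application. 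The fibre-wise trace of Definition \ref{def fibre trace} and multiplication by the smooth function $\tr(x\cdot A\Phi)$ are local too. Since $x\cdot AK$ and $x\cdot K^0$ are $\delta$-type distributions carried by $S_{x^{-1}}$ (see \eqref{eq WF K}), their restrictions to $\Rnz\times\Delta(M)$ are supported on $S_{x^{-1}}\cap(\Rnz\times\Delta(M))=\{(l,m,m);\varphi_l(m)=xm\}$, and near any point of $\Rnz\times\Delta(M)$ off this set both sides of \eqref{eq red scalar} vanish (the right-hand side because $(x\cdot K^0)|_{\Rnz\times\Delta(M)}$ does). So it suffices to prove \eqref{eq red scalar} near a point $(l,m_0,m_0)$ with $\varphi_l(m_0)=xm_0$.

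Near such a point I would trivialise $E$. Because $\varphi_l(x^{-1}m_0)=x^{-1}\varphi_l(m_0)=m_0$, the point $\varphi_t(x^{-1}m)$ stays in a fixed neighbourhood $U$ of $m_0$ for $(t,m)$ near $(l,m_0)$; fix a frame $\{e_j\}$ of $E|_U$ with dual frame $\{e^j\}$, and let $M(t,m)_{ij}$ be the matrix, in this frame, of the linear map $xA\Phi_{-t}\colon E_{\varphi_t(x^{-1}m)}\to E_m$, which is smooth in $(t,m)$ near $(l,m_0)$. Unwinding \eqref{eq def x K} gives, for test sections supported near $(l,m_0,m_0)$,
\[
\langle x\cdot AK,\psi\otimes\xi\otimes s\rangle=\int_M\int_\R\psi(t)\,\bigl\langle\xi(m),\,xA\Phi_{-t}\bigl(s(\varphi_t(x^{-1}m))\bigr)\bigr\rangle\,dt\,dm .
\]
Expanding $\xi$ and $s$ in the frame and comparing with the analogous expression for the scalar kernel $x\cdot K^0$, whose local form is precisely the one computed in Lemma \ref{lem fixed pt loc}, yields the identity of $E\boxtimes E^*$-valued distributions near $(l,m_0,m_0)$
\[
x\cdot AK=\sum_{i,j} M(\,\cdot\,)_{ij}\,\bigl(e_i\boxtimes e^j\bigr)\,(x\cdot K^0),
\]
where $M(\,\cdot\,)_{ij}$ is read as a function of the first two slots $(t,m)$. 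This is the step with real content: one must check that fibre-wise linearity of $\Phi$ genuinely pulls the smooth matrix $M$ out of the underlying $\delta$-distribution, so that the factorisation is valid distributionally and not merely for smooth kernels.

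It then remains to restrict and take the fibre trace. Multiplication by the smooth sections $M(\,\cdot\,)_{ij}$ and $e_i\boxtimes e^j$ commutes with restriction to $\Rnz\times\Delta(M)$, so
\[
(x\cdot AK)|_{\Rnz\times\Delta(M)}=\sum_{i,j} M(\,\cdot\,)_{ij}\,(e_i\otimes e^j)|_{\Delta(M)}\,(x\cdot K^0)|_{\Rnz\times\Delta(M)} .
\]
By Definition \ref{def fibre trace} and Example \ref{ex fibre trace smooth} the fibre-wise trace of $(e_i\otimes e^j)|_{\Delta(M)}$ is $\delta_{ij}$, and $\tr$ of a scalar distribution times a smooth endomorphism-valued section is that distribution times the pointwise fibre trace; hence
\[
\tr\bigl((x\cdot AK)|_{\Rnz\times\Delta(M)}\bigr)=\Bigl(\textstyle\sum_i M(\,\cdot\,)_{ii}\Bigr)\,(x\cdot K^0)|_{\Rnz\times\Delta(M)} .
\]
Finally $\sum_i M(t,m)_{ii}=\tr(x\cdot A\Phi)(t,m,m)=\sum_j\langle e^j(m),((x\cdot A\Phi_{-t})e_j)(m)\rangle$ straight from the definition of $M$; on the set $\{\varphi_l(m)=xm\}$ this common value is the frame-independent endomorphism trace $\tr(Ax\Phi_{-l}|_\gamma)$, and since $(x\cdot K^0)|_{\Rnz\times\Delta(M)}$ is a first-order $\delta$-distribution carried by that set (cf.\ Lemma \ref{lem fixed pt loc}), multiplying it by $\sum_i M(\,\cdot\,)_{ii}$ or by $\tr(x\cdot A\Phi)$ gives the same distribution. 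This proves \eqref{eq red scalar}. The main obstacle is the bundle bookkeeping in the middle step — keeping track of which fibres $\Phi_{-t}$, $A$ and $x$ relate — and turning the pointwise factorisation into a rigorous distributional identity.
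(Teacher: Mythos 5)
Your proposal is correct and follows essentially the same route as the paper: expand in a local frame, pull the smooth matrix of $xA\Phi_{-t}$ out of the scalar kernel $x\cdot K^0$ (the paper phrases this as the identity $\tilde\tr(x\cdot AK)=\tilde\tr(x\cdot A\Phi)\cdot(x\cdot K^0)$ on $\Rnz\times U\times U$, taking the trace before restricting rather than after, but the computation is the same), then restrict to the diagonal and glue with a partition of unity. Your extra localisation near the set $\{\varphi_l(m)=xm\}$ is harmless but not needed, since the frame computation works on any trivialising neighbourhood.
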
 
\begin{proof}
Consider a local frame $\{e_j\}_{j=1}^{\rank(E)}$ for $E$ defined in an open set $U \subset M$, with dual frame  $\{e^j\}_{j=1}^{\rank(E)}$ for $E^*$. Extending \eqref{eq tr x Phi}, we define $\tilde \tr(x\cdot A\Phi) \in C^{\infty}(\Rnz \times U\times U)$  by
\[
 \tilde \tr(x\cdot A\Phi)(t,m,m') := \sum_{j=1}^{\rank(E)} \left\langle e^j(m),( (x\cdot A\Phi_{-t}) e_j)(m) \right\rangle,
\]
for $t \in \Rnz$ and $m,m' \in U$. Observe that this function is constant in the second factor $U$.

Define  $\tilde \tr(x\cdot AK) \in \calD'(\Rnz \times U \times U)$ by
\[
\left\langle \tilde \tr(x\cdot AK), \psi \otimes f_1 \otimes f_2\right\rangle = 
\sum_{j}\left\langle x\cdot AK, \psi \otimes f_1e^j \otimes f_2 e_j\right\rangle,
\]
for  $\psi \in C^{\infty}_c(\Rnz)$, and  $f_1, f_2 \in C^{\infty}_c(U)$. Then for all such $\psi$, $f_1$ and $ f_2$, \eqref{eq def Phi psi} and \eqref{eq def x K} imply that
\begin{multline*}
\left\langle \tilde \tr(x\cdot AK), \psi \otimes f_1 \otimes f_2\right\rangle\\
 = 
\sum_{j} \int_M \int_{\R} \psi(t) f_1(m) f_2(\varphi_t x^{-1}m) \left\langle e^j(m), xA\Phi_{-t}e_j(\varphi_t( x^{-1}m))\right\rangle\, dt\, dm\\
%
%
%
%
%
%
= \left\langle  x\cdot K^0,  \tilde \tr(x\cdot A\Phi)  \cdot ( \psi \otimes f_1 \otimes f_2) \right\rangle
\end{multline*}
so
\[
 \tilde \tr(x\cdot AK) =\tilde \tr(x\cdot A\Phi) \cdot ( x\cdot K^0).
\]
The restrictions of both sides to $\Rnz \times \Delta(U)$ are well-defined by Proposition \ref{prop restr K}. Taking these restrictions and using
\[
\begin{split}
\tilde \tr(x\cdot A\Phi)|_{\Rnz \times \Delta(U)} &= \tr(x\cdot A\Phi)|_{\Rnz \times \Delta(U)} ;\\
\tilde \tr(x\cdot AK)|_{\Rnz \times \Delta(U)} &=  \tr(x\cdot AK|_{\Rnz \times \Delta(U)}),
\end{split}
\]
and partitions of unity with respect to sets like $U$, 
we obtain \eqref{eq red scalar}.
\end{proof}

Theorem \ref{prop fixed pt gen} follows from Proposition \ref{prop fixed pt Trb}, Lemma \ref{lem red scalar},  and the fact that for all $m \in M$ such that $\varphi_t(m) = xm$,
\[
 \tr(x\cdot A\Phi)(t,m,m) = \tr(Ax(\Phi_{-t})_m), 
\]
the trace of the linear endomorphism $A\circ x\circ (\Phi_{-t})_m$ of $E_m$.

 \bibliographystyle{plain}

\bibliography{mybib}

\end{document}